\newcounter{my_enumerate_counter}
\newcommand{\pushcounter}{\setcounter{my_enumerate_counter}{\value{enumi}}}
\newcommand{\popcounter}{\setcounter{enumi}{\value{my_enumerate_counter}}}
\newcommand{\undefined}{{\tt undefined}}
\newcommand{\liff}{\leftrightarrow}
\DeclareMathOperator{\tp}{tp} 
\DeclareMathOperator{\RegCard}{RegCard}
\newcommand{\bt}{\mathbf t} 
\newcommand{\ba}{\mathbf a} 
\newcommand{\bb}{\mathbf b} 
\newcommand{\bg}{\mathbf g} 
\newcommand{\bd}{\mathbf d} 
\newcommand{\be}{\mathbf e} 
\newcommand{\bh}{\mathbf h}
\newcommand{\bbF}{{\mathbb F}}
\DeclareMathOperator{\Card}{Card}
\newcommand{\bfC}{\mathbf C}
\newcommand{\bbS}{{\mathbb S}}
\newcommand{\bbK}{{\mathbb K}}
\newcommand{\bbN}{{\mathbb N}}
\newcommand{\bbC}{\mathbb C}
\newcommand{\bbQ}{\mathbb Q}
\newcommand{\bbR}{\mathbb R}
\newcommand{\cX}{{\mathcal X}}
\newcommand{\cA}{{\mathcal A}}
\newcommand{\cE}{{\mathcal E}}
\newcommand{\fc}{\mathfrak c} 
\newcommand{\rs}{\restriction}
\newcommand{\ff}{f} 
\newcommand{\cT}{\mathcal T}
\DeclareMathOperator{\FI}{FI}
\DeclareMathOperator{\pcU}{\prod_{\cU}}
\DeclareMathOperator{\cU}{\mathcal U} 
\DeclareMathOperator{\cV}{\mathcal V} 
\DeclareMathOperator{\dom}{dom}
\newcommand{\cC}{\mathcal C}
\newcommand{\cG}{\mathcal G}
\newcommand{\ccG}{\mathcal G}
\newcommand{\cB}{\mathcal B}
\newcommand{\calD}{\mathcal D}
\newcommand{\e}{\varepsilon}
\newtheorem{thm}{Theorem}[section]
\newtheorem{theorem}{Theorem}
\newtheorem{corollary}[theorem]{Corollary}
\newtheorem{claim}[thm]{Claim}
\newtheorem{lemma}[thm]{Lemma}
\newtheorem{prop}[thm]{Proposition}
\theoremstyle{definition}
\newtheorem{remark}[thm]{Remark}
\newtheorem{definition}[thm]{Definition}
\newtheorem{example}[thm]{Example}
\DeclareMathOperator{\inv}{inv} 
\DeclareMathOperator{\INV}{INV} 
\DeclareMathOperator{\cf}{cf} 
\newcommand{\cP}{\mathcal P} 
\renewcommand{\vec}{\bar}
\DeclareMathOperator{\Alt}{Alt}
\title{A dichotomy for the number of ultrapowers}
\author{Ilijas Farah}
\address{Department of Mathematics and Statistics\\
York University\\
4700 Keele Street\\
North York, Ontario\\ Canada, M3J
1P3\\
and Matematicki Institut, Kneza Mihaila 35, Belgrade, Serbia}
\urladdr{http://www.math.yorku.ca/$\sim$ifarah}
\email{ifarah@mathstat.yorku.ca}
\author{Saharon Shelah}
\address{The Hebrew University of Jerusalem\\
Einstein Institute of Mathematics\\
Edmond J. Safra Campus, Givat Ram\\
Jerusalem 91904, Israel\\ and \\
Department of Mathematics\\
Hill Center-Busch Campus\\
Rutgers, The State University of New Jersey\\
110 Frelinghuysen Road\\
Piscataway, NJ 08854-8019 USA}
\email{shelah@math.huji.ac.il}
\urladdr{http://shelah.logic.at/}
\thanks{The first author was  partially supported by NSERC and he would like to thank 
Takeshi Katsura for several useful remarks}
\thanks{The second author would like to thank the Israel Science Foundation for
  partial support of this research (Grant no. 710/07).
Part of this work was done when the authors visited the Mittag-Leffler Institute. No. 954 on 
Shelah's list of publications. }
\subjclass{Primary: 03C20. Secondary: 46M07}
\date{\today}
\begin{document}

\begin{abstract} 
We prove a strong dichotomy for the number of ultrapowers 
of a given  model of cardinality $\leq 2^{\aleph_0}$  associated with nonprincipal 
ultrafilters on~$\bbN$. They are either all isomorphic, or
else there are $2^{2^{\aleph_0}}$ many nonisomorphic ultrapowers. 
We prove the analogous result for metric structures, including C*-algebras 
and II$_1$ factors, as well as their relative commutants and include 
several applications. 
We also show that the C*-algebra~$\cB(H)$ always has nonisomorphic relative commutants 
in its ultrapowers associated with nonprincipal ultrafilters on~$\bbN$.
\end{abstract} 

\maketitle


\section{Introduction} 
In the following all ultrafilters are nonprincipal ultrafilters on $\bbN$. 
In particular, `all ultrapowers of $A$' always stands for `all ultrapowers associated with 
 nonprincipal ultrafilters on $\bbN$.'

The question of counting the number of nonisomorphic models of a given theory in a given 
cardinality 
was one of the main driving forces behind the development of  
Model Theory (see Morley's Theorem and \cite{Sh:c}). On the other hand, the question of counting 
the 
number of nonisomorphic ultrapowers of a given model has received more attention from
 functional analysts than from  
logicians. 

Consider a countable structure $A$ in a countable signature. By a classical result of Keisler, 
every ultrapower $\prod_{\cU} A$ is countably saturated (recall that~$\cU$ is assumed 
to be a nonprincipal ultrafilter on $\bbN$). This implies that the ultrapowers of $A$ 
are not easy to distinguish. Moreover, 
if the Continuum Hypothesis holds then they are all saturated
and therefore isomorphic (this fact will not be used in the present paper; see \cite{ChaKe}). 

Therefore the question of counting nonisomorphic ultrapowers of a given countable 
structure is nontrivial only when the Continuum Hypothesis fails, and  
in the remaining 
part of this introduction we assume that it does fail. 
If we moreover assume that the theory of $A$ is unstable (or equivalently, 
that it 
has the \emph{order property}---see
the beginning of \S\ref{S.Representing}) then $A$ has nonisomorphic 
ultrapowers (\cite[Theorem~VI.3]{Sh:c} and independently \cite{Do:Ultrapowers}). 
The converse, that if the theory of $A$ is stable then all of its ultrapowers are isomorphic, 
was proved only recently (\cite{FaHaSh:Model2}) although  main components of the 
proof were present in \cite{Sh:c} and the result was essentially known to the second author. 

The question of the isomorphism of ultrapowers was first asked
by operator algebraists. This is not so surprising in the light of the fact that the ultrapower
construction is an indispensable tool in Functional Analysis and in particular in Operator
Algebras. The  ultrapower construction for Banach spaces, 
C*-algebras, or II$_1$ factors is again 
an honest metric structure of the same type. These constructions 
coincide with the ultrapower construction for metric structures as defined
in \cite{BYBHU} (see also \cite{FaHaSh:Model2}). 
The Dow--Shelah result can be used to prove  that C*-algebras and II$_1$ factors 
have nonisomorphic ultrapowers (\cite{GeHa} and \cite{FaHaSh:Model1}, respectively), 
and with some extra effort this conclusion can be extended to the relative commutants
of separable
C*-algebras and II$_1$ factors in their utrapowers (\cite{Fa:Relative} and \cite{FaHaSh:Model1}, 
respectively). 

However, the methods used in \cite{GeHa}, \cite{Fa:Relative} and \cite{FaHaSh:Model1} 
  provide only as many nonisomorphic
ultrapowers as there are uncountable cardinals $\leq\fc=2^{\aleph_0}$ (with our assumption, 
 two). 
In  \cite[\S3]{KShTS:818} it was proved (still assuming only that CH fails) 
that $(\bbN,<)$ has $2^{\fc}$ nonisomorphic ultrapowers. As pointed out
in \cite{ElHaScTh}, this proof could easily be modified to obtain the same conclusion 
for any infinite linear (sometimes called \emph{total}) order in place of $(\bbN,<)$
but the proof  does not cover  even  the case of an arbitrary 
 partially ordered set with an infinite chain.

\begin{theorem}\label{T1}  Assume the Continuum Hypothesis, CH,  fails. 
If $A$ is a model of cardinality $\leq\fc$ 
such that the theory of $A$ is unstable, then 
there are $2^{\mathfrak c}$ isomorphism types of models of the form $\pcU A$, 
where $\cU$ ranges over nonprincipal ultrafilters on~$\bbN$. 
\end{theorem}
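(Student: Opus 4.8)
The plan is to reduce the general unstable theory to the linear-order case already handled in \cite{KShTS:818} by exploiting the order property. Since the theory of $A$ is unstable, there is a formula $\varphi(\bar x,\bar y)$ (with $|\bar x|=|\bar y|=n$ for some $n$) and tuples $\bar a_i$ ($i<\omega$) in $A$ (or in an elementary extension, but by Löwenheim–Skolem we may keep things inside a model of size $\leq\fc$) such that $\varphi(\bar a_i,\bar a_j)$ holds iff $i<j$. First I would pass from $A$ to the two-sorted (or rather, the definable) structure carrying this linear pre-order: the point is that the ordered set $(\bbN,<)$, or more precisely an infinite linear order, is \emph{interpretable} in $\pcU A$ uniformly via $\varphi$, because $\varphi(\bar x,\bar y)$ defines on the appropriate quotient of $A^n$ a partial order containing an infinite chain, and — crucially — the ultrapower $\pcU A$ computes this interpreted structure as the corresponding ultrapower of the interpreted structure in $A$. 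So if $\pcU A\cong\pcV A$ then the interpreted linearly-orderable structures are isomorphic as well.

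The second step is to upgrade "a partial order with an infinite chain" to "a genuine infinite linear order whose $\cU$-ultrapower is sensitive to $\cU$". Here the subtlety flagged in the excerpt — that the \cite{KShTS:818} argument covers linear orders but not arbitrary posets with an infinite chain — must be confronted directly. The idea is that the order property gives us, inside $\pcU A$, for each $f\in\bbN^\bbN$ an element coding $f$ "mod $\cU$" sitting on the $\varphi$-chain, so that the $\cU$-invariant being extracted in \cite{KShTS:818} (some combinatorial characteristic of $\cU$, e.g. related to which sequences of natural numbers dominate which, or more precisely the object they call the relevant cut/gap structure) is already visible. Concretely I would fix a countable increasing $\varphi$-chain $\langle \bar a_i : i<\omega\rangle$ in $A$, consider in $\pcU A$ the elements $\bar a_f$ for $f\in\bbN^\bbN$ (diagonal tuples $i\mapsto \bar a_{f(i)}$), observe $\pcU A\models\varphi(\bar a_f,\bar a_g)$ iff $f(n)<g(n)$ for $\cU$-almost all $n$, and thereby embed the $\cU$-ultrapower of $(\bbN,<)$ into the $\varphi$-order of $\pcU A$ as a definable-with-parameters sub-order. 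This reduces the problem to: from an isomorphism $\pcU A\cong \pcV A$ recover enough to contradict the $2^{\fc}$-many-ultrafilters dichotomy of \cite{KShTS:818}.

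The heart of the argument — and the step I expect to be the main obstacle — is making the last reduction actually count to $2^{\fc}$ rather than merely to "$\geq 2$" or "$\geq$ the number of cardinals $\leq\fc$", which is all the naive interpretability argument gives. The clean way is to build, for each subset $S\subseteq\fc$ (or each element of some family of size $2^{\fc}$), an ultrafilter $\cU_S$ such that the isomorphism type of $\pcU_S A$ determines $S$; one does this by arranging that a specific \emph{invariant} of $\pcU_S A$ — read off from the $\varphi$-order via the chain above, and hence an invariant of the linear-order ultrapower $\prod_{\cU_S}(\bbN,<)$ — takes $2^{\fc}$ distinct values as $\cU_S$ varies. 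So I would import wholesale the family of ultrafilters and the invariant from \cite[\S3]{KShTS:818}, verify that the invariant is expressible purely in terms of the $\varphi$-order on diagonal tuples $\bar a_f$ (so it transfers across any isomorphism of the ambient ultrapowers), and conclude. The technical friction will be (i) handling the quotient by the $\varphi$-equivalence relation $\varphi(\bar x,\bar y)\wedge\varphi(\bar y,\bar x)$ cleanly, (ii) ensuring the chain $\langle\bar a_i\rangle$ and the coding $f\mapsto\bar a_f$ interact correctly with \emph{both} $\cU$ and $\cV$ under a hypothetical isomorphism (the isomorphism need not respect the chain, so one must argue the invariant is isomorphism-invariant, not just "chain-invariant"), and (iii) checking the uniformity needed so that everything takes place inside structures of cardinality $\leq\fc$, as required by the statement. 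I expect (ii) to be where the real work lies, and it is presumably handled in the paper by showing the relevant invariant is in fact \emph{canonical} — definable in $\pcU A$ without reference to the choice of chain, e.g. as a spectrum of cuts realized in some uniformly definable linear suborder.
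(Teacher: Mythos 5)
The general shape of your plan --- build $2^{\fc}$ ultrafilters so that each ultrapower contains a $\phi$-chain realizing a distinct invariant, then argue the invariant is an invariant of the whole structure $\pcU A$ rather than merely of the chosen chain --- matches the architecture of the paper. But the specific mechanism you propose for carrying this out has a gap, and it sits exactly where you flag ``the real work'' to be, namely step (ii).

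First, a smaller but consequential error: the order property does \emph{not} say that $\phi$ defines a preorder, partial order, or linear order on (a quotient of) $A^n$. It says only that there exist arbitrarily long finite $\preceq_\phi$-chains; the relation $\preceq_\phi$ need not be transitive, and the paper explicitly warns against assuming it is. So the opening move --- ``pass to the interpreted linearly-orderable structure, ultrapowers commute with interpretation'' --- does not get off the ground: there is no interpreted linear order. Likewise, the $\phi$-chain $\{\bar a_f : f\in \bbN^{\bbN}\}$ you build in $\pcU A$ by diagonalizing is \emph{not} a definable-with-parameters suborder of $\pcU A$; it is just a subset, produced externally, whose membership condition refers to representing sequences and hence is not internal to the model.

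This makes your resolution of (ii) --- ``the invariant is presumably canonical, i.e.\ definable in $\pcU A$ without reference to the choice of chain'' --- not viable as stated, and it is not how the paper proceeds. The invariant $\inv^{m,\lambda}$ used here is a set-theoretic object built from cofinalities and quotients modulo the club-plus-uncountable-cofinality filter $\calD(\kappa,\aleph_1)$; it is not first-order definable in any reasonable sense, and no amount of interpretability bookkeeping will make it isomorphism-invariant for free. What actually does the work in the paper is the notion of a chain being \emph{weakly $(\aleph_1,\phi)$-skeleton like} (Definition~\ref{Def.wsl}): a combinatorial indiscernibility condition saying that every element of the model sees the chain as if through a countable window. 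The two lemmas you would be missing are (a) Lemma~\ref{L.J}, which says that if such a chain of order type $J$ sits in $A$ and $\inv^{m,\lambda}(J)$ is defined, then this invariant belongs to a set $\INV^{m,\lambda}(A,\phi)$ intrinsically attached to $(A,\phi)$; and (b) Lemma~\ref{L.disj.1}, which bounds the number of pairwise-disjoint representing sequences of such invariants by $|A|$. Together with Lemma~\ref{L.many-invariants} (there are $2^{\lambda}$ linear orders with pairwise disjoint representing sequences of invariants), these give the pigeonhole count in Proposition~\ref{P.counting}. That is the replacement for ``canonicity by definability.''

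Correspondingly, ``import the family of ultrafilters wholesale from \cite[\S3]{KShTS:818}'' is not available: the paper notes that the method there extends to linear orders but not even to arbitrary posets with infinite chains, precisely because one needs to \emph{arrange} the skeleton-like indiscernibility for a possibly non-transitive $\preceq_\phi$ during the ultrafilter construction. This requires the independent-family-plus-maximal-filter set-up and the Ramsey argument of Claim~\ref{C.fip} in Proposition~\ref{P.construction}, which does genuinely new work beyond what exists in the $(\bbN,<)$ case. In short: your outline correctly identifies the target (many ultrafilters, many invariants, isomorphism-invariance of the invariant) but attributes the isomorphism-invariance to the wrong source (definability/interpretability rather than the skeleton-like indiscernibility), and the missing ingredient --- the weakly $(\aleph_1,\phi)$-skeleton like condition and the lemmas built on it --- is the actual core of the proof.
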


In Theorem~\ref{T1+} we prove a generalization of Theorem~\ref{T1} for ultraproducts. 

\begin{corollary} \label{C1} 
For a model $A$ of cardinality $\leq\fc$ with a countable signature 
either all of its ultrapowers are isomorphic or there are $2^{\mathfrak c}$ isomorphism
types of its ultrapowers. 
\end{corollary}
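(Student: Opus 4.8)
The plan is to deduce Corollary~\ref{C1} from Theorem~\ref{T1} by a short case analysis, the remaining ingredients being the two facts recalled in the introduction: Keisler's theorem that every ultrapower $\pcU A$ is countably saturated, and the theorem of~\cite{FaHaSh:Model2} that if the theory of $A$ is stable then all of its ultrapowers are isomorphic. First I would dispose of the trivial case: if $A$ is finite then $\pcU A\cong A$ for every $\cU$, so both alternatives reduce to ``all ultrapowers isomorphic''; hence assume $A$ is infinite. By {\L}o\'s's theorem every $\pcU A$ is a model of one and the same complete theory $T$ in the fixed countable signature.

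Second, I would pin down the cardinalities. Since the universe of $\pcU A$ is a quotient of $A^{\bbN}$ we get $|\pcU A|\le|A|^{\aleph_0}\le\fc^{\aleph_0}=\fc$; and, fixing a countably infinite $A_0\subseteq A$, the inclusion of $A_0$-valued functions gives $|\pcU A|\ge|A_0^{\bbN}/\cU|=|\bbN^{\bbN}/\cU|=\fc$, the last equality being the standard computation that nonprincipal ultrapowers over $\bbN$ of an infinite set have size continuum (see~\cite{ChaKe}). Thus every ultrapower of $A$ has cardinality exactly $\fc$. Now the case split. If CH holds, then $\fc=\aleph_1$, so each $\pcU A$ is an $\aleph_1$-saturated (i.e.\ countably saturated) model of $T$ of cardinality $\aleph_1$, hence a saturated model of $T$; by the uniqueness of saturated models of a complete theory of a prescribed cardinality, all ultrapowers of $A$ are isomorphic, and the first alternative holds. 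If CH fails and $T$ is stable, all ultrapowers of $A$ are isomorphic by~\cite{FaHaSh:Model2}, and again the first alternative holds. Finally, if CH fails and $T$ is unstable, Theorem~\ref{T1} applies directly and yields $2^{\fc}$ isomorphism types of ultrapowers of $A$, the second alternative; this is the largest conceivable value, since a structure of cardinality $\le\fc$ in a countable signature is coded by $\aleph_0$ many subsets of $\fc^{<\omega}$, and there are only $(2^{\fc})^{\aleph_0}=2^{\fc}$ such codes.

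I do not anticipate a genuine obstacle here: Theorem~\ref{T1} (proved below) and the stable case of~\cite{FaHaSh:Model2} carry all the weight, and what remains is routine bookkeeping. The one point deserving a line of care is the lower bound $|\pcU A|\ge\fc$: without it one could not exclude, under CH, an accidentally small ($\aleph_1$-saturated but countable) ultrapower, and the ``all isomorphic'' conclusion could fail in that case.
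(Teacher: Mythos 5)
Your proof is correct and takes essentially the same route as the paper: dispose of the finite case, then split according to whether the theory is stable (invoke \cite{FaHaSh:Model2}), whether CH holds (invoke Keisler's countable saturation plus uniqueness of saturated models), and otherwise invoke Theorem~\ref{T1}. The only difference is that you spell out the cardinality computation $|\pcU A|=\fc$ and the upper bound $2^{\fc}$ explicitly, whereas the paper treats these as understood; this is sound bookkeeping but not a different argument.
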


\begin{proof} We may assume $A$ is infinite. 
If the theory of $A$ is stable, then $\pcU A$ is  saturated and of cardinality $\fc$ and therefore 
all such ultrapowers are isomorphic (\cite{FaHaSh:Model2}). 
If the Continuum Hypothesis holds, then all the ultrapowers are isomorphic
by Keisler's result. In the remaining case when  the Continuum Hypothesis fails and  
the theory of $A$ is unstable use Theorem~\ref{T1}. 
\end{proof} 

We also prove the analogue  of Theorem~\ref{T1} for metric structures
(see \cite{BYBHU} or~\cite{FaHaSh:Model2}). 
The ultrapowers of metric structures are defined in \S\ref{S.metric}. 
Recall that the \emph{character density} of a metric space is the 
minimal cardinality of its dense subspace.

\begin{theorem}\label{T1.m}  Assume CH fails. 
If $A$ is a metric structure of character density $\leq\fc$ 
such that the theory of $A$ is unstable, then 
there are $2^{\mathfrak c}$ isometry  types of models of the form $\pcU A$, 
where $\cU$ ranges over nonprincipal ultrafilters on~$\bbN$. 
\end{theorem}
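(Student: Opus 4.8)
The plan is to reduce the metric case to the discrete case already established in Theorem~\ref{T1}. The key observation is that an unstable metric theory has the order property witnessed by a single formula $\varphi(\bar x,\bar y)$ and a sequence of tuples $(\bar a_i : i<\omega)$ in (an elementary extension of) $A$, together with thresholds $r<s$ such that $\varphi(\bar a_i,\bar a_j)\le r$ when $i<j$ and $\varphi(\bar a_i,\bar a_j)\ge s$ when $i\ge j$ (or the symmetric arrangement). This is exactly the continuous analogue of the discrete order property, and the definition of instability for metric structures in \cite{BYBHU} is set up so that such a configuration exists. From this data one manufactures, inside $\prod_\cU A$, a linear (pre)order of size continuum: working in the ultrapower, the countable saturation guarantees one can realize long $\varphi$-chains, and the relevant combinatorial skeleton one extracts is an infinite linear order, just as in the discrete setting.

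First I would isolate, from the hypothesis that $\Th(A)$ is unstable, the formula $\varphi$ and the threshold pair $r<s$ as above; this is standard and I would cite \cite{BYBHU} and \cite{FaHaSh:Model2} for the equivalence of instability with the (continuous) order property. Second, I would observe that the proof of Theorem~\ref{T1} does not really use that the structure $A$ is discrete: what it uses is a definable family of ``cuts'' in an infinite linear order living inside the ultrapower, and the construction of $2^\fc$ many ultrafilters $\cU$ producing pairwise non-isomorphic realizations of these cuts. Concretely, I would revisit the machinery of \S\ref{S.Representing} (the section the excerpt points to for the order property) and check that every step goes through verbatim when ``$\varphi(\bar a,\bar b)$ holds'' is replaced by ``$\varphi(\bar a,\bar b)\le r$'' and ``$\varphi(\bar a,\bar b)\ge s$'', using the gap $r<s$ to keep the two halves of the order separated under ultrapowers. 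Third, I would note that the cardinality bookkeeping is unchanged: character density $\le\fc$ plays exactly the role that cardinality $\le\fc$ plays in Theorem~\ref{T1}, since the $\varphi$-chains we build are indexed by a set of size $\le\fc$ and the ambient ultrapower has character density $\le\fc$ as well, so there are at most $2^\fc$ isometry types in total and we produce that many.

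The main obstacle I anticipate is the gap between ``holds/fails'' and ``$\le r$ / $\ge s$'': in the metric setting the distinguishing formula is real-valued, so two tuples can have $\varphi$-value strictly between $r$ and $s$, and one must ensure that the invariants of $\cU$ used to separate ultrapowers in Theorem~\ref{T1} are insensitive to this slack. I expect this is handled exactly as countable saturation handles it elsewhere: a condition of the form $\varphi(\bar x,\bar a_i)\le r$ for all $i\in I$ and $\varphi(\bar x,\bar a_j)\ge s$ for all $j\in J$ is either realized in $\prod_\cU A$ or it is not, and the former happens iff it is finitely approximately satisfiable in $A$, which depends only on the linear order type of the ``cut'' $(I,J)$ — precisely the combinatorial datum Theorem~\ref{T1} manipulates. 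So the metric theorem follows once one verifies that the order-property configuration behaves, modulo the fixed gap $[r,s]$, like a genuine linear order; the rest is a transcription of the discrete argument, and I would present it as such rather than repeating it in full.
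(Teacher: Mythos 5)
Your outline matches the paper's strategy at the top level: instability gives a formula $\phi$ with the continuous order property, one normalizes so that $\preceq_\phi$ is a genuine relation (the paper sets $\bar a\preceq_\phi\bar b$ iff $\phi(\bar a,\bar b)=0$ and $\phi(\bar b,\bar a)=1$; your threshold pair $r<s$ is the same thing up to composing $\phi$ with a ramp connective), and one then transcribes the discrete machinery of invariants. But there is a real gap exactly at the point you flag. You hope that the slack from the real-valued $\phi$ is ``handled exactly as countable saturation handles it elsewhere'' and that what matters is which cuts of a linear order are realized; neither is correct. The invariants that separate the $2^{\fc}$ ultrapowers are the $\inv^{m,\fc}$-invariants of a weakly $(\aleph_1,\phi)$-skeleton-like $\phi$-chain of order type $I$ with $|I|=\fc$, and such a chain cannot be obtained from countable saturation (which realizes only countable types), nor does ``realizing cuts'' see the cofinality/club-filter structure that the invariants encode. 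The chain has to be built into the ultrafilter, and this is the step the paper itself calls ``the most drastic modification'': in Proposition~\ref{P.construction.m}, for every representing sequence $g$ and every $\e>0$, the filter must contain the set $C_{gij\e}$ of coordinates $n$ on which $|\phi(f_i(n),g(n))-\phi(f_j(n),g(n))|<\e$ (and symmetrically), whenever $[i,j]_I$ avoids the countable support $\bbS_g$. Showing that this family has the finite intersection property is not ``verbatim'' from the discrete case: one has to quantize $\phi$-values at granularity $1/M$ with $M>2/\e$ and replace the discrete four-way split by a Ramsey-and-pigeonhole argument over $(M+1)^2$ approximate ``$k^*$-types.'' Your proposal does not engage with this, and it is not a step that the discrete proof supplies.

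Two further adjustments you pass over. First, when a tuple $\bar c$ defines an invariant over an elementary submodel $B$, the discrete proof chooses $\bar c$ to avoid $\bigcup_{\bar b\in B^n}\cC_{\bar b}$; in the metric setting $|B^n|$ may exceed $\chi(B)$, so the paper works with a dense $B_0\subseteq B$ of cardinality $\chi(B)$ and then upgrades to all of $B$ using continuity of the map $\bar x\mapsto\tp_\phi(\cdot/\bar x)$ (Lemma~\ref{L.3.9.m}). Second, the counting lemma needs that $d^n$-close tuples define the same invariant, which the paper secures by clamping $\phi$ through a $[1/3,2/3]$ ramp (Lemma~\ref{L.disj.1.m+}); this is where your $r<s$ intuition actually earns its keep, but you do not deploy it there. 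So the high-level plan is the paper's plan, but as written the proposal treats as routine precisely the part of the argument that requires new work.
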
 

The proof is a modification of the proof of Theorem~\ref{T1} 
and it will be outlined in \S\ref{S.metric}. 
Although Theorem~\ref{T1.m} implies Theorem~\ref{T1}, we chose to present
the proof of Theorem~\ref{T1} separately because it is the main case and 
because some of the main ideas are more transparent in the discrete case.

\begin{corollary} \label{C1.m} 
For a metric structure  $A$ of character density  $\leq\fc$ with a countable signature 
either all of its ultrapowers are isomorphic or there are~$2^{\mathfrak c}$ isomorphism
types of its ultrapowers. 
\end{corollary}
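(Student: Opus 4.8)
The plan is to copy the proof of Corollary~\ref{C1} essentially verbatim, replacing ``cardinality'' by ``character density'' throughout and invoking Theorem~\ref{T1.m} in place of Theorem~\ref{T1}. First I would dispose of a degenerate case: if $A$ is totally bounded --- equivalently, compact --- then for every nonprincipal ultrafilter $\cU$ on $\bbN$ the $\cU$-limit map from $\pcU A$ onto $A$, sending the class of $(a_n)_n$ to $\lim_{\cU}a_n$, is an isomorphism (a well-defined isometric bijection that preserves the distinguished functions and predicates, since these are uniformly continuous), so $A$ has a single isomorphism type of ultrapower. Hence from now on we may assume $A$ is not totally bounded; then, exactly as in the cardinality computation in the discrete case, $\pcU A$ has character density precisely $\fc$ for every $\cU$ --- at most $\fc$ because $\pcU D$ is dense in $\pcU A$ for any dense $D\subseteq A$ and $|\pcU D|\le\fc^{\aleph_0}=\fc$, and at least $\fc$ because an infinite $\e$-separated sequence in $A$ produces, via an almost disjoint family of functions $\bbN\to\bbN$, an $\e$-separated family of size $\fc$ inside $\pcU A$.

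Now I would run the three-way split. If the theory of $A$ is stable, then by \cite{FaHaSh:Model2} every $\pcU A$ is saturated of character density $\fc$, hence all are isomorphic. If CH holds, then every $\pcU A$ is countably saturated by the metric analogue of Keisler's theorem, so being $\aleph_1$-saturated of character density $\fc=\aleph_1$ it is saturated, and again all are isomorphic. In the only remaining case --- CH fails and the theory of $A$ is unstable --- Theorem~\ref{T1.m} applies verbatim and yields $2^{\fc}$ isometry types among the $\pcU A$; since an isomorphism of metric structures is by definition a surjective isometry respecting the signature, these are $2^{\fc}$ isomorphism types, as required.

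I do not anticipate a genuine obstacle: this is a pure case distinction, and everything with content is imported --- the stable case together with the elementary behaviour of metric ultrapowers from \cite{FaHaSh:Model2}, and the ``CH fails and unstable'' case from Theorem~\ref{T1.m}, whose proof is the real work and is carried out in~\S\ref{S.metric}. The only steps that call for a line of checking are the bookkeeping used above, namely that a metric ultrapower over $\bbN$ of a non-totally-bounded structure of character density $\le\fc$ again has character density exactly $\fc$, and that countable saturation (equivalently, $\aleph_1$-saturation) upgrades to saturation once the character density is at most $\aleph_1$; both are routine facts of metric model theory. The one point worth flagging explicitly is the identification of ``isomorphism'' with ``structure-preserving isometry'', which is precisely what lets the conclusion of Theorem~\ref{T1.m} be read off as the statement of the corollary.
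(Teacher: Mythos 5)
Your proof follows the same three-way case split as the paper's: stable theory (saturated of character density $\fc$ via \cite{FaHaSh:Model2}), CH (metric Keisler), and CH fails plus unstable (Theorem~\ref{T1.m}); the extra bookkeeping you supply about ultrapower character density is correct and routine. The one place you are actually more careful than the paper is the degenerate case: the paper opens with ``we may assume $A$ is infinite,'' but in the metric setting the right hypothesis to discard is \emph{compactness} (equivalently total boundedness, since $A$ is complete), as you observe --- an infinite compact metric structure still has all its ultrapowers isomorphic to $A$ itself, so ``infinite'' alone does not guarantee the ultrapower has character density $\fc$.
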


\begin{proof} We may assume $A$ is infinite. 
If the theory of $A$ is stable, then $\pcU A$ is  saturated and of character density 
 $\fc$ and therefore 
all such ultrapowers are isomorphic (\cite{FaHaSh:Model2}). 
If the Continuum Hypothesis holds then all ultrapowers are 
isomorphic by the analogue of Keisler's theorem for metric 
structures (\cite{BYBHU}). In the remaining case, when the 
Continuum Hypothesis fails and 
 the theory of $A$ is unstable use Theorem~\ref{T1.m}. 
\end{proof} 

Important instances  of the ultraproduct construction for metric spaces include C*-algebras, 
II$_1$ factors (see e.g., \cite{FaHaSh:Model2}) and metric groups (see \cite{Pe:Hyperlinear}).

\subsection*{Organization of the paper} 
The proof of Theorem~\ref{T1}  uses ideas from  \cite[\S VI.3]{Sh:c},  \cite[\S3]{KShTS:818}
and \cite[III.3]{Sh:e} and it  
will be presented in \S\ref{S.Invariants}, \S\ref{S.Representing}, 
\S\ref{S.Construction} and \S\ref{S.proof.T1}. 
Theorem~\ref{T1.m} is 
proved in~\S\ref{S.metric}, 
and some applications will be given in \S\ref{S.Applications}.
 In \S\ref{S.Local} we prove  local versions of Theorem~\ref{T1} and Theorem~\ref{T1.m}, 
 and in Proposition~\ref{P.B(H)} we use the latter to prove that $\cB(H)$ always 
 has nonisomorphic relative commutants in  its ultrapowers associated with nonprincipal 
 ultrafilters on~$\bbN$.   
Sections \S\ref{S.Invariants} and \S\ref{S.Representing} are essentially a revision
of \cite[\S 3]{Sh:c}, and \S\ref{S.Construction} has a small, albeit nonempty intersection 
with~\cite[\S3]{KShTS:818} (and therefore with the latter half of \cite[\S VI.3]{Sh:c}). 

\subsection*{Notation and terminology} If $A$ denotes a model, then its universe is 
also denoted by $A$ and the cardinality of its universe (or any other set $A$) is denoted
by $|A|$. Hence what we denote by $A$ is denoted by  $A$ or by  $|A|$  in \cite{Sh:c} and 
\cite{Sh:e}, and what we denote by $|A|$ is denoted by $||A||$ in \cite{Sh:c} and \cite{Sh:e}
if $A$ is a model. 
We also don't distinguish the notation for a formula $\phi(x)$ and its evaluation $\phi[a]$ in a
model. It will always be clear from the context.

Letters $I$ and $J$, possibly with subscripts or superscripts, will always denote \emph{linear}  
(i.e., \emph{total}) orders. 
The reverse of a linear 
order $I$ will be denoted by $I^*$. The \emph{cofinality} of a 
linear order $I$, $\cf(I)$, is the mininal cardinality of a cofinal subset of $I$. 
By $I+J$ we denote the order with domain $I\sqcup J$ in which
copies of $I$ and $J$ are taken with the original ordering and 
 $i<j$ for all $i\in I$ and all $j\in J$. 
If $J$ and $I_j$, for $j\in J$, are linear orders then $\sum_{j\in J} I_j$ denotes
the order with the underlying set $\bigcup_{j\in J} \{j\}\times I_j$ ordered lexicographically.

Following the notation common in Model Theory, 
an ultrapower of $A$ associated with an ultrafilter $\cU$ will be denoted by $\prod_{\cU} A$, 
even in the case when $A$ is an operator algebra, where the notation $A^{\cU}$ for the
ultrapower is standard. We refrain from using the symbol $\omega$ in order to avoid confusion.

By $\forall^\infty m$ we denote the quantifier `for all large enough $m\in \bbN$.' 
More generally, if  $D$ is a filter on $\bbN$ then by  $(\forall^D n)$ we denote the quantifier
as a shortcut for `the set of all $n$ such that\dots{} belongs to $D$.' 

An $n$-tuple of elements of $A$ is always denoted by $\vec a$. 
 
For $k\geq 1$ by $[X]^k$ we denote the set of all $k$-element subsets of $X$. 

A cardinal $\kappa$ will be identified with the least ordinal of cardinality $\kappa$, 
as well as the linear order~$(\kappa,<)$. A cardinal $\kappa$ is \emph{regular} 
if $\kappa=\cf(\kappa)$ and \emph{singular} otherwise. 
An increasing family of ordinals or cardinals $\lambda_\xi$, for $\xi<\gamma$, is
\emph{continuous} if $\lambda_\eta=\sup_{\xi<\eta} \lambda_\xi$ whenever 
$\eta$ is a limit ordinal. 
Analogously, an increasing family $A_\xi$, for $\xi<\gamma$, of sets is \emph{continuous} 
if $A_\eta=\bigcup_{\xi<\eta} A_\xi$ for every limit ordinal $\eta$.

\section{Invariants of linear orders} 
\label{S.Invariants} 
The material of the present and the following 
sections is loosely based on \cite[III.3]{Sh:e}.

\subsection{The invariant $\inv^m(J)$} \label{S.m}
In the following we consider the invariant $\inv^\alpha_\kappa(I)$ as defined in 
 \cite[Definition~III.3.4]{Sh:e}, or rather its special case when $\alpha=m\in\bbN$ and 
 $\kappa=\aleph_1$. All the arguments presented here can straightforwardly be extended
 to the more general context of an arbitrary ordinal $\alpha$ and regular cardinal $\kappa$. 

In certain cases we define the invariant to be \undefined{}. The phrase  
 `an invariant is defined' will be used as an abbreviation for `an invariant is not equal to 
 \undefined{}.'
  
For a linear order $(I,\leq)$ define  $\inv^m(I)$, for $m\in \bbN$,  by recursion as follows. 
If $\inv^m(I)$ is \undefined{} for some $m$, then $\inv^{m+1}(I)$ is also \undefined. 
If $\cf(I)\leq \aleph_0$ then let $\inv^0(I)$ be \undefined. 
Otherwise let 
\[
\inv^0(I)=\cf(I). 
\]
In order to define $\inv^m(I)$ for $m\geq 1$ write $\kappa=\inv^0(I)$. Although the definition 
when $m=1$ is a special case of  the general case, we single it out as a warmup.    
Fix a continuous sequence $I_\xi$, for $\xi<\kappa$, of proper initial segments of $I$ 
such that $I=\bigcup_{\xi<\kappa} I_\xi$. Then let $\lambda_\xi=\cf((I\setminus I_\xi)^*)$, 
where $J^*$ denotes the reverse order on $J$. Thus $\lambda_\xi$, for $\xi<\kappa$, 
is the sequence of coinitialities of end-segments of $I$ corresponding to the sequence 
$I_\xi$, for $\xi<\kappa$. 

Let $\calD(\kappa,\aleph_1)$  be the filter  on $\kappa$ dual to the ideal 
generated by the nonstationary ideal and 
the set $\{\xi<\kappa\colon \cf(\xi)\leq \aleph_0\}$. 
Define $f\colon \kappa\to \Card$~by 
\[
f(\xi)=
\begin{cases}
 \lambda_\xi, & \text{ if } \lambda_\xi\geq \aleph_1\\
0 & \text{ if } \lambda_\xi\leq \aleph_0.
\end{cases} 
\]
If the set $\{\xi\colon f(\xi)=0\}$ belongs to $\calD(\kappa,\aleph_1)$ then 
let $\inv^1(I)$ be the equivalence class of $f$ modulo $\calD(\kappa,\aleph_1)$, 
or in symbols 
\[
\inv^1(I)=f/\calD(\kappa,\aleph_1). 
\] 
Otherwise, $\inv^1(I)$ is \undefined{}. 

Assume $m\geq 1$ and $\inv^m(J)$ is defined for all linear orders $J$ (allowing
the very definition of $\inv^m(J)$ to be `\undefined{}'). 
Assume $I$ and $I_\xi$,  
for $\xi<\kappa=\cf(I)$, are as in the case $m=1$. 
Define a
function $g_{m}$ with domain~$\kappa$ via
\[
g_{m}(\eta)=\inv^m((I\setminus I_\eta)^*). 
\]
If $\{\eta\colon g_{m}(\eta)$ is defined$\}$ belongs 
to $\calD(\kappa,\aleph_1)$ then  let 
$\inv^{m+1}(I)$ be the equivalence class of $g_{m}$ modulo  $\calD(\kappa,\aleph_1)$. 
Otherwise $\inv^{m+1}(I)$ is \undefined{}.

This defines $\inv^m(I)$ for all $I$. 
For a (defined)  invariant $\bd$ we shall write $\cf(\bd)$ for $\cf(I)$, where $I$ is any linear
order with $\inv^m(I)=\bd$. We also write 
\[
|\bd|=\min\{|I| \colon \bd=\inv^m(I)\text{ for some $m$}\}.
\]
  Our invariant $\inv^m(I)$ essentially corresponds to $\inv^m_{\aleph_1}(I)$ as defined
in \cite[Definition~III.3.4]{Sh:e}.
 Although $\inv^\eta$ can be recursively 
defined for every ordinal~$\eta$, we do not have applications for this general notion. 
As a matter of fact, only $\inv^m$ for  $m\leq 3$
will be used in the present paper.

\begin{example} \label{Example} 
Assume throughout this example that  $\kappa$ is 
a cardinal with  $\cf(\kappa)\geq \aleph_1$. 

(1) Then $\inv^0(\kappa)=\cf(\kappa)$ and $\inv^1(\kappa)$ is \undefined{}. 

(2) If $\lambda$ is a cardinal with $\cf(\lambda)\geq \aleph_1$ 
then $\inv^0(\kappa\times\lambda^*)=\cf(\kappa)$ and $\inv^1(\kappa\times\lambda^*)$
is the equivalence class of the function on $ \cf(\kappa)$ everywhere equal to $\cf(\lambda)$, 
modulo the ideal $\calD(\cf(\kappa),\aleph_1)$. 

(3) If $\inv^m(I_\xi)$ is defined for all $\xi<\kappa$ and $\kappa$ is regular
then with $I=\sum_{\xi<\kappa} I_\xi^*$
we have that $\inv^{m+1}(I)$ is the equivalence class of the function $g(\xi)=\inv^m(I_\xi)$
modulo $\calD(\kappa,\aleph_1)$. 
\end{example}

Example (3) above will be used to define linear orders with prescribed invariants.


\begin{lemma} \label{L.lo} 
\begin{enumerate}
\item For every regular $\lambda\geq\aleph_2$ 
 there are $2^{\lambda}$ linear orders of cardinality $\lambda$
with pairwise distinct, defined, invariants $\inv^1(I)$. 
\item 
If $\lambda$ is singular  then for every regular uncountable $\theta$ such that 
\[
\max(\aleph_2,\cf(\lambda))\leq\theta<\lambda
\]  
 there are $2^\lambda$ linear orders of cardinality $\lambda$ and cofinality $\theta$
with pairwise distinct, defined, invariants $\inv^2(I)$. 
\end{enumerate}
\end{lemma}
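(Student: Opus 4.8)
The plan is to construct, for each of the two items, a large family of linear orders realising distinct invariants by encoding arbitrary subsets (or functions) into the coinitiality data, and then to check that distinct codes give distinct invariants modulo $\calD(\kappa,\aleph_1)$.

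For item (1): fix a regular $\lambda\ge\aleph_2$. For each function $h\colon\lambda\to\{\aleph_0,\aleph_1\}$ I would build a linear order $I_h$ of cardinality $\lambda$ and cofinality $\lambda$ as follows. Take $I_h=\sum_{\xi<\lambda} J_\xi^{*}$ where $J_\xi$ is a linear order whose coinitiality (i.e.\ $\cf$ of its reverse) is controlled by $h$: concretely put $J_\xi=\aleph_1$ if $h(\xi)=\aleph_1$ and $J_\xi=\omega$ if $h(\xi)=\aleph_0$, so that $\cf((I_h\setminus (I_h)_\xi)^{*})=h(\xi)$ along the natural continuous filtration by initial segments $(I_h)_\xi=\sum_{\zeta<\xi}J_\zeta^{*}$. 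By the computation of $\inv^1$ via the coinitiality function $\lambda_\xi$ (with values $\ge\aleph_1$ recorded, values $\le\aleph_0$ sent to $0$), we get that $\inv^1(I_h)$ is defined precisely when $\{\xi:h(\xi)=\aleph_0\}\in\calD(\lambda,\aleph_1)$, and in that case equals the class $[\,\xi\mapsto h(\xi){\cdot}[h(\xi)=\aleph_1]\,]$ modulo $\calD(\lambda,\aleph_1)$. So I should not range over all $h$ but over a family witnessing $2^\lambda$ distinct classes: since $\calD(\lambda,\aleph_1)$ contains the club filter, distinct mod-nonstationary classes of $\{0,\aleph_1\}$-valued functions give distinct invariants, and there are $2^\lambda$ such classes (partition $\lambda$ into $\lambda$ disjoint stationary sets $S_\eta$ by Solovay, and for $X\subseteq\lambda$ let $h_X$ be $\aleph_1$ exactly on $\bigcup_{\eta\in X}S_\eta$; the symmetric difference of the supports of $h_X,h_{X'}$ then contains a stationary set, so the functions are not $\calD$-equivalent). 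This produces $2^\lambda$ orders of size $\lambda$ with pairwise distinct defined $\inv^1$.

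For item (2): $\lambda$ singular, $\theta$ regular uncountable with $\max(\aleph_2,\cf(\lambda))\le\theta<\lambda$. Now I want cofinality exactly $\theta$, so the outermost sum must be over $\theta$. Using Example~(3), set $I=\sum_{\xi<\theta} I_\xi^{*}$ where each $I_\xi$ is one of the orders from part (1): fix an increasing sequence of regular cardinals $\mu_i<\lambda$, $i<\cf(\lambda)$, cofinal in $\lambda$, each $\ge\aleph_2$, and for $\xi<\theta$ let $I_\xi$ range over the $2^{\mu_{i(\xi)}}$ orders of size $\mu_{i(\xi)}$ with distinct defined $\inv^1$, where $i\colon\theta\to\cf(\lambda)$ is chosen so that the sup of $|I_\xi|$ is $\lambda$ (possible since $\theta\ge\cf(\lambda)$). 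Then $|I|=\theta+\sup_\xi|I_\xi|=\lambda$, $\cf(I)=\theta$, and by Example~(3), $\inv^2(I)$ is the $\calD(\theta,\aleph_1)$-class of $\xi\mapsto\inv^1(I_\xi)$. Running $\xi\mapsto I_\xi$ over all choices (on a fixed stationary partition of $\theta$ as above, to kill the nonstationary ideal) gives $2^\theta$ orders; but we want $2^\lambda$. Here is where I'd push harder: since there are $2^{\mu_i}$ choices at each coordinate lying in the $i$-th block and $\sum_{i<\cf(\lambda)}\mu_i\cdot 2^{\mu_i}$-many relevant coordinates, a counting argument (or: directly code an arbitrary subset of $\lambda$ by choosing, on each stationary block of $\theta$ of order type cofinal enough, which $\inv^1$-value to place) yields $\prod_{\xi<\theta}2^{|I_\xi|}=2^\lambda$ genuinely distinct $\calD(\theta,\aleph_1)$-classes, because two sequences differing on a stationary set of coordinates — at a coordinate where the chosen $I_\xi$'s already have distinct $\inv^1$ — are not $\calD$-equivalent.

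The main obstacle, and the step deserving the most care, is the last one: verifying that the map (code) $\mapsto\inv^m(\text{order})$ is genuinely injective, i.e.\ that two different codes are not identified after quotienting by $\calD(\kappa,\aleph_1)$. This requires (a) a clean statement that $\inv^{m}$ equality forces, along \emph{some} pair of matching continuous filtrations, pointwise $\inv^{m-1}$-equality on a $\calD$-large set — essentially a ``uniqueness of the filtration up to club'' lemma for these invariants, which is the content one imports from \cite[III.3]{Sh:e} — and (b) arranging the stationary-set bookkeeping so that differing codes differ on a set that is positive for $\calD(\kappa,\aleph_1)$ (not merely nonstationary, and avoiding the points of countable cofinality that $\calD$ also discards). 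Both the club filter's $\lambda$-additivity and Solovay's stationary splitting theorem are the tools; the rest is routine once the invariant-rigidity lemma is in hand.
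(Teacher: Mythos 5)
Your overall strategy for both items is the right one: encode into the sequence of coinitialities along a continuous filtration, work modulo $\calD(\kappa,\aleph_1)$, and separate codes via Solovay's stationary-splitting theorem. This is essentially the paper's approach. However, there is a concrete gap in your item~(1) that propagates to item~(2).

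In item~(1) you take the two possible coinitiality values to be $\aleph_0$ and $\aleph_1$. This does not work. Recall that the coinitiality function $f$ records $\lambda_\xi$ only when $\lambda_\xi\geq\aleph_1$ and records $0$ otherwise, and that $\inv^1$ is defined precisely when $\{\xi:f(\xi)\neq 0\}\in\calD(\lambda,\aleph_1)$ --- i.e., when $f$ is nonzero on a club of points of uncountable cofinality. (The paper's text appears to have a typo here, writing $f(\xi)=0$ where $f(\xi)\neq 0$ is clearly intended, as one can check against the $m\geq 1$ clause of the recursion; this may be the source of your confusion.) With your colour set $\{\aleph_0,\aleph_1\}$ you are stuck: if $h_X$ takes value $\aleph_0$ on a stationary set of uncountable-cofinality points (which happens for every proper $X$ in your coding), then $\{f\neq 0\}\notin\calD(\lambda,\aleph_1)$ and $\inv^1(I_{h_X})$ is simply $\undefined$. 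If instead $h_X$ is $\aleph_1$ on a $\calD$-large set, then $f$ is $\calD$-equivalent to the constant function $\aleph_1$, so all such invariants coincide. Either way you do not get $2^\lambda$ \emph{defined} and \emph{pairwise distinct} invariants. The fix, which is what the paper does, is to use two \emph{uncountable} regular values, e.g.\ $\aleph_1$ and $\aleph_2$, at every coordinate of uncountable cofinality (and anything trivial at the countable-cofinality coordinates, since $\calD$ ignores them). Then $\{f\neq 0\}\supseteq\{\xi:\cf(\xi)\geq\aleph_1\}\in\calD(\lambda,\aleph_1)$, so the invariant is always defined; and if $Z\neq Y$ then on the stationary set $S_\eta$ (where $\eta\in Z\triangle Y$) the two coinitiality functions take the distinct values $\aleph_1$ and $\aleph_2$, whence the invariants differ. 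Note also that the $S_\eta$ should be taken inside $\{\xi<\lambda:\cf(\xi)\geq\aleph_1\}$; a stationary set meeting $\{\cf\leq\aleph_0\}$ in a $\calD$-null set is fine, but one lying entirely in $\{\cf\leq\aleph_0\}$ would be useless.

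For item~(2), once item~(1) is corrected, your plan is again the paper's plan, but the final counting step is left too vague. You observe correctly that coordinate-wise free choice of $I_\xi$ does not give injectivity (two sequences differing only at nonstationarily many coordinates give the same $\inv^2$). The clean repair is to partition $\{\xi<\theta:\cf(\xi)\geq\aleph_1\}$ into $\cf(\lambda)$ stationary blocks $S_i$, fix regular $\mu_i<\lambda$ ($i<\cf(\lambda)$) with $\sum_i\mu_i=\lambda$, and for each $g\in\prod_{i<\cf(\lambda)}2^{\mu_i}$ place the \emph{same} order $I_{i,g(i)}$ (reversed) at \emph{every} coordinate of $S_i$. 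Since $|\prod_i 2^{\mu_i}|=2^\lambda$ and two distinct $g,h$ differ at some $i$, their representing sequences of $\inv^1$-values differ on all of the stationary block $S_i$, so $\inv^2(J_g)\neq\inv^2(J_h)$. Your formulation (choosing $I_\xi$ ``at each coordinate'' and then hoping the counting works out) does not actually exhibit $2^\lambda$ codes whose differences are guaranteed to land on a stationary set; the blockwise-constant assignment is what makes this automatic.
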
 

\begin{proof} This is cases (1--3) of 
 \cite[Lemma~III.3.8]{Sh:e}, with  $\kappa=\aleph_1$ 
but we reproduce the proof for the convenience of the reader. 

(1) If $\lambda\geq\aleph_2$ is regular, then
the set $\{\xi<\lambda\colon \cf(\xi)\geq \aleph_1\}$ can be partitioned into $\lambda$ disjoint
stationary sets (see \cite[Appendix, Theorem~1.3(2)]{Sh:c} or 
\cite[Corollary~6.12]{Ku:Book}). 
Denote these sets by  $S_\eta$, for $\eta<\lambda$. 
For $Z\subseteq \lambda$ define a linear order 
$L_Z$ as follows. For $\alpha<\lambda$ let 
\[
\kappa(\alpha)=\begin{cases} 
\aleph_1&\text{ if }\alpha\in \bigcup_{\eta\in Z} S_\eta\\
\aleph_2&\text{ if }\alpha\in \bigcup_{\eta\notin Z} S_\eta\\
1&\text{ if } \cf(\alpha)\leq\aleph_0.
\end{cases}
\]
Let $L_Z=\sum_{\alpha<\lambda}\kappa(\alpha)^*$. More formally, 
let the domain of $L_Z$ be the set 
$\{(\alpha,\beta)\colon \alpha<\lambda, \beta<\kappa(\alpha)\}$
ordered by $(\alpha_1,\beta_1)\prec_L (\alpha_2,\beta_2)$ if $\alpha_1<\alpha_2$ or
$\alpha_1=\alpha_2$ and $\beta_1>\beta_2$. 
Then $\inv^1(L_Z)$ is clearly defined. 
A standard argument using the stationarity of $S_\xi$ for any $\xi\in Z\Delta Y$ 
shows that $\inv^1(L_Z)\neq \inv^1(L_Y)$ if $Z\neq Y$. 

(2) Now assume $\lambda$ is singular. 
Pick regular cardinals
 $\lambda_i$, for $i<\cf(\lambda)$, such that $\sum_{i<\lambda} \lambda_i=\lambda$. 
Using (1) for each $i$ fix linear orders $I_{ij}$, for $j<2^{\lambda_i}$, of cardinality $\lambda_i$
such that $\inv^1(I_{ij})$ are all defined and distinct. 
Since $|\prod_{i<\cf(\lambda)} 2^{\lambda_i}|=2^\lambda$ it will suffice to 
associate a linear order $J_g$ to every $g\in \prod_{i<\cf(\lambda)} 2^{\lambda_i}$ such that 
$\inv^2(J_g)$ is defined for every $g$ and $\inv^2(J_g)\neq \inv^2(J_h)$ whenever $g\neq h$. 

Since $\theta\geq\max(\aleph_2,\cf(\lambda))$, by 
 \cite[Appendix, Theorem~1.3(2)]{Sh:c} or 
\cite[Corollary~6.12]{Ku:Book} we may partition  the set 
 $\{\xi<\theta\colon \cf(\xi)\geq \aleph_1\}$ into $\cf(\lambda)$ stationary sets $S_\xi$, for $\xi<
\cf(\lambda)$. 
Then 
\[
\textstyle J_g=\sum_{\xi<\theta} I_{\xi,g(\xi)}^*
\]
has $\inv^0(J_g)=\theta$ and 
$\inv^2(J_g)=\langle \inv^1(I_{\xi,g(\xi)})\colon \xi<\theta\rangle/\calD(\theta,\aleph_1)$. 
If $\xi$ is such that 
 $h(\xi)\neq g(\xi)$ then the representing sequences of $\inv^2(J_g)$ and $\inv^2(J_h)$
disagree on the stationary set $S_\xi$. Therefore $g\mapsto \inv^2(J_g)$ is an injection, as 
required.  
\end{proof}

\subsection{A modified  invariant $\inv^{m,\lambda}(J)$}
\label{S.m,lambda} 
Fix a cardinal $\lambda$.  For  a linear order~$J$ of cardinality $\lambda$ and $m\in\bbN$
we define an invariant that is a modification of $\inv^m(J)$, considering three cases. 
Recall that for a regular cardinal $\lambda$ we let $\calD(\lambda,\aleph_1)$ denote   filter
on $\lambda$ generated by the club filter and $\{\xi<\lambda\colon \cf(\xi)\geq\aleph_1\}$. 

\subsubsection{Assume $\lambda$ is regular.} Then let 
$\inv^{m,\lambda}(J)=\inv^m(J)$
if $\cf(J)=\lambda$ and \undefined{} otherwise. 

\subsubsection{Assume $\lambda$ is singular and $\cf(\lambda)>\aleph_1$} 
\label{S.2.2.2} 
Fix an increasing  continuous sequence of 
cardinals $\lambda_\xi$, for $\xi<\cf(\lambda)$, such that
$\lambda=\sup_{\xi<\cf(\lambda)}\lambda_\xi$.  

Then let 
$\inv^{0,\lambda}(J)=\inv^0(J)$ if $\cf(J)=\cf(\lambda)$ and \undefined{} otherwise. 
If $m\geq 1$ and $\inv^{0,\lambda}(J)$ is defined, then let $\inv^{m,\lambda}(J)=\inv^m(J)$ 
if $\inv^m(J)=\langle \bd_\xi\colon \xi<\cf(\lambda)\rangle$ is such that  
\[
\{\xi<\cf(\lambda)\colon \cf(\bd_\xi)>\lambda_\xi\}\in \calD(\cf(\lambda),\aleph_1). 
\]

\subsubsection{Assume $\lambda$ is singular and $\aleph_1\geq\cf(\lambda)$}
This case will require extra work. 
Like above, fix an increasing  continuous sequence of 
cardinals $\lambda_\xi$, for $\xi<\cf(\lambda)$, such that
$\lambda=\sup_{\xi<\cf(\lambda)}\lambda_\xi$.  
By $\RegCard$ we denote the class of all regular cardinals. 

\begin{lemma} \label{L.h} If $\cf(\lambda)\leq \aleph_1$ 
then there is  $h=h_\lambda\colon \aleph_2\to \lambda\cap \RegCard$  
  such that 
$h^{-1}([\mu,\lambda))$ is $\calD(\aleph_2,\aleph_1)$-positive for every $\mu<\lambda$. 
\end{lemma}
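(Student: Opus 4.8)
The plan is to construct $h=h_\lambda$ by piecing together the cardinals $\lambda_\xi$ from the fixed increasing continuous sequence, distributed over $\aleph_2$ in such a way that for every $\mu<\lambda$ the set of $\alpha<\aleph_2$ where $h(\alpha)\geq\mu$ is $\calD(\aleph_2,\aleph_1)$-positive. Recall $\calD(\aleph_2,\aleph_1)$ is the filter generated by the club filter on $\aleph_2$ together with the set $\{\alpha<\aleph_2\colon\cf(\alpha)\geq\aleph_1\}$, i.e.\ a set is $\calD(\aleph_2,\aleph_1)$-positive iff its intersection with $\{\alpha<\aleph_2\colon\cf(\alpha)=\aleph_1\}$ is stationary. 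So what I really need is a function $h$ such that for each $\mu<\lambda$ the set $\{\alpha<\aleph_2\colon\cf(\alpha)=\aleph_1,\ h(\alpha)\geq\mu\}$ is stationary in $\aleph_2$.

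First I would handle the case $\cf(\lambda)=\aleph_1$: partition the stationary set $S=\{\alpha<\aleph_2\colon\cf(\alpha)=\aleph_1\}$ into $\aleph_1$ pairwise disjoint stationary sets $\langle S_\xi\colon\xi<\aleph_1\rangle$ (using \cite[Appendix, Theorem~1.3(2)]{Sh:c} or \cite[Corollary~6.12]{Ku:Book}, exactly as in the proof of Lemma~\ref{L.lo}), and define $h(\alpha)=\lambda_\xi$ whenever $\alpha\in S_\xi$, and $h(\alpha)=\aleph_1$ (say) for $\alpha\notin S$. Then for any $\mu<\lambda$ there is $\xi_0<\aleph_1$ with $\lambda_{\xi_0}\geq\mu$ (since $\sup_{\xi<\aleph_1}\lambda_\xi=\lambda$), hence $h^{-1}([\mu,\lambda))\supseteq S_{\xi_0}$, which is stationary and therefore $\calD(\aleph_2,\aleph_1)$-positive. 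The case $\cf(\lambda)=\aleph_0$ is the same, using a countable partition $\langle S_n\colon n<\omega\rangle$ of $S$ and setting $h(\alpha)=\lambda_n$ on $S_n$; again every $\mu<\lambda$ is dominated by some $\lambda_n$, so $h^{-1}([\mu,\lambda))\supseteq S_n$ is stationary. Finally $\cf(\lambda)$ could be $1$, i.e.\ $\lambda$ is a successor cardinal, say $\lambda=\nu^+$; then simply take $h$ constantly equal to $\nu$ (if $\nu$ is regular) — every $\mu<\lambda$ satisfies $\mu\leq\nu$, so $h^{-1}([\mu,\lambda))=\aleph_2$. If $\nu$ is singular one instead uses the $\cf(\nu)$-case applied internally: partition $S$ into $\cf(\nu)$ stationary pieces and let $h$ take a cofinal-in-$\nu$ sequence of regular values along them; this still gives $h^{-1}([\mu,\lambda))$ stationary for each $\mu<\lambda$, and keeps the range inside $\lambda\cap\RegCard$.

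The only genuine content is the existence of the stationary partition of $\{\alpha<\aleph_2\colon\cf(\alpha)=\aleph_1\}$ into $\aleph_1$ (or $\aleph_0$) disjoint stationary sets, and that is a classical fact (Solovay's theorem, in the form cited above in Lemma~\ref{L.lo}); everything else is bookkeeping about which $\lambda_\xi$ lies in which piece. The main point to be careful about is matching the case division of the lemma statement ($\cf(\lambda)\leq\aleph_1$) against the genuine subcases $\cf(\lambda)\in\{1,\aleph_0,\aleph_1\}$ and making sure the range of $h$ consists of regular cardinals below $\lambda$ — which is automatic since the $\lambda_\xi$ may be chosen regular (when $\cf(\lambda)$ is infinite) and $\nu$, resp.\ a cofinal sequence in $\nu$, is regular in the successor case. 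I do not expect any obstacle beyond this routine case analysis.
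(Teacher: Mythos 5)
Your construction is essentially the paper's: partition $\aleph_2$ (equivalently the positive set $\{\alpha<\aleph_2\colon\cf(\alpha)=\aleph_1\}$) into $\cf(\lambda)$ many $\calD(\aleph_2,\aleph_1)$-positive pieces $S_\xi$ via Solovay's theorem, and on $S_\xi$ let $h$ take a value $\geq\lambda_\xi$; then every $h^{-1}([\mu,\lambda))$ contains a whole $S_{\xi_0}$ for $\xi_0$ large enough. Two small corrections. First, the case $\cf(\lambda)=1$ does not arise: in the context of \S\ref{S.m,lambda} we have $\lambda\geq\aleph_2$ singular, and a cardinal $\lambda\geq\aleph_0$ is never a successor ordinal, so $\cf(\lambda)$ is an infinite regular cardinal $\leq\aleph_1$, hence $\cf(\lambda)\in\{\aleph_0,\aleph_1\}$; moreover, if $\lambda$ were a successor cardinal $\nu^+$ it would be regular with $\cf(\lambda)=\lambda\geq\aleph_2$, contradicting the hypothesis. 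Second, the remark that ``the $\lambda_\xi$ may be chosen regular'' conflicts with the fixed continuous sequence of \S\ref{S.m,lambda}, whose terms at limit stages are suprema and hence typically singular; the fix is simply to set $h(\alpha)$ equal to any regular cardinal in $[\lambda_\xi,\lambda)$ when $\alpha\in S_\xi$ (such exists since $\lambda$ is a limit cardinal above $\lambda_\xi$), rather than to $\lambda_\xi$ itself.
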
 

 \begin{proof} Partition $\aleph_2$ into $\cf(\lambda)$ sets  $S_\xi$, $\xi<\cf(\lambda)$
 that are $\calD(\aleph_2,\aleph_1)$-positive and pick $h(\xi)>\eta$ if $\xi\in S_\eta$. 
 \end{proof} 
 
 With $h=h_\lambda$ as in Lemma~\ref{L.h} let  $\calD_h(\aleph_2)$
be the filter generated by $\calD(\aleph_2,\aleph_1)$ and the sets $h^{-1}([\mu,\lambda))$
for $\mu<\lambda$.  In the following the function $h_\lambda$ will be fixed 
for each $\lambda$ such that $\cf(\lambda)\leq\aleph_1$. We shall therefore 
suppress writing $h$ everywhere except in $\calD_{h_\lambda}(\aleph_2)$, usually 
dropping the subscript $\lambda$ which will be clear from the context.

Define $\inv^{m,\lambda}(J)$ (really $\inv^{m,\lambda,h}(J)$) as follows. 

Let $\inv^{0,\lambda}(J)=\inv^0(J)$ if $\cf(J)=\aleph_2$ and \undefined{} otherwise.  

Assume $m\geq 1$ and 
\[
\inv^m(J)=\langle \bd_\xi\colon \xi<\aleph_2\rangle/\calD(\aleph_2,\aleph_1). 
\]
If $\{\xi\colon \cf(\bd_\xi)>\lambda_{h(\xi)}\}\in \calD_h(\aleph_2)$ 
then  let 
 \[
 \inv^{m,\lambda}(J)=
 \langle \bd_\xi\colon \xi<\aleph_2\rangle/\calD_h(\aleph_2)
\]
and \undefined{} otherwise. 

Since $\calD_h(\aleph_2)$ extends $\calD(\aleph_2,\aleph_1)$, this invariant  is well-defined. 

\begin{definition} Given a cardinal $\lambda\geq\aleph_2$ and $m\in\bbN$, 
an \emph{$m,\lambda$-invariant} is any  invariant  $\inv^{m,\lambda}(J)$
for a linear order $J$ of cardinality $\lambda$ that is not equal to \undefined. 
\end{definition} 

Two representing sequences  $\langle \bd_\xi\colon \xi<\kappa\rangle$ and 
$\langle\be_\xi\colon \xi<\kappa\rangle$ of invariants of the same cofinality
 $\kappa$ are \emph{disjoint} if $\bd_\xi\neq \be_\xi$ for all $\xi$. 
 Note that this is not a property of the invariants since it depends on the choice of the 
 representing sequences. 

\begin{lemma}\label{L.many-invariants} For every cardinal $\lambda\geq\aleph_2$ there 
exist $m\in\bbN$ and 
 $2^\lambda$ disjoint representing sequences of $m,\lambda$-invariants of linear
orders of cardinality $\lambda$. 
\end{lemma}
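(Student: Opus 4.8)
The plan is to prove Lemma~\ref{L.many-invariants} by splitting into the three cases used in the definition of $\inv^{m,\lambda}$ and applying the already-proved Lemma~\ref{L.lo} together with Example~\ref{Example}(3) to assemble linear orders whose representing sequences are pairwise disjoint.

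\emph{Case 1: $\lambda$ regular.} Here $\inv^{m,\lambda}=\inv^m$ on orders of cofinality $\lambda$, so it suffices to produce $2^\lambda$ linear orders of cardinality $\lambda$ and cofinality $\lambda$ whose $\inv^1$'s have pairwise disjoint representing sequences. I would run the construction from the proof of Lemma~\ref{L.lo}(1): partition $\{\xi<\lambda\colon \cf(\xi)\geq\aleph_1\}$ into $\lambda$ stationary sets $S_\eta$, but this time use \emph{three} distinct uncountable regular values, say $\aleph_1,\aleph_2,\aleph_3$ (or more), rather than just two. For a function $c\colon\lambda\to\{1,2\}$ define $L_c=\sum_{\alpha<\lambda}\kappa_c(\alpha)^*$ where $\kappa_c(\alpha)\in\{\aleph_1,\aleph_2\}$ is read off from which $S_\eta$ contains $\alpha$ and the value $c(\eta)$, and $\kappa_c(\alpha)=1$ when $\cf(\alpha)\leq\aleph_0$. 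The representing sequence of $\inv^1(L_c)$ along the canonical continuous exhaustion has $\xi$-th entry equal to $\cf$ of the end-segment coinitiality, which on the stationary set $S_\eta$ is forced to be $c(\eta)$-dependent; since for $c\neq c'$ the sequences differ on a stationary (hence $\calD(\lambda,\aleph_1)$-positive) set and indeed \emph{everywhere} if we arrange the exhaustions coherently, we get $2^\lambda$ pairwise disjoint representing sequences. The mild subtlety is arranging a single fixed exhaustion $I_\xi$ so that ``disjoint'' (differ at \emph{every} coordinate, not just on a large set) holds — this is handled by noting each $L_c$ naturally comes with the exhaustion by initial segments $\{(\alpha,\beta)\colon\alpha<\gamma\}$, and one passes to a club of $\gamma$ where the value is determined, then re-indexes by that club, which is order-isomorphic to $\lambda$.

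\emph{Case 2: $\lambda$ singular, $\cf(\lambda)>\aleph_1$.} Fix the increasing continuous $\lambda_\xi$, $\xi<\cf(\lambda)$, with $\lambda=\sup\lambda_\xi$. I would imitate the proof of Lemma~\ref{L.lo}(2) with $\theta=\cf(\lambda)$: partition $\{\xi<\cf(\lambda)\colon\cf(\xi)\geq\aleph_1\}$ into $\cf(\lambda)$-many stationary sets, for each $\xi$ use Case 1 (applied to the regular cardinal $\lambda_{\xi+1}$, which exceeds $\lambda_\xi$) to get $2^{\lambda_{\xi+1}}$ disjoint representing sequences of $1,\lambda_{\xi+1}$-invariants $\inv^1(I_{\xi,j})$, and for $g\in\prod_{\xi}2^{\lambda_{\xi+1}}$ set $J_g=\sum_{\xi<\cf(\lambda)}I_{\xi,g(\xi)}^*$. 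Then $\inv^0(J_g)=\cf(\lambda)$ and $\inv^2(J_g)=\langle\inv^1(I_{\xi,g(\xi)})\colon\xi<\cf(\lambda)\rangle/\calD(\cf(\lambda),\aleph_1)$ by Example~\ref{Example}(3). One must check the side condition $\{\xi\colon\cf(\bd_\xi)>\lambda_\xi\}\in\calD(\cf(\lambda),\aleph_1)$ from \S\ref{S.2.2.2}: since $\cf(\inv^1(I_{\xi,g(\xi)}))=\cf(I_{\xi,g(\xi)})=\lambda_{\xi+1}>\lambda_\xi$ for every $\xi$, this set is all of $\cf(\lambda)$, so $\inv^{2,\lambda}(J_g)$ is defined and equals $\inv^2(J_g)$. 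Disjointness of the representing sequences $\langle\inv^1(I_{\xi,g(\xi)})\rangle_\xi$ for $g\neq h$ requires that these differ at \emph{every} $\xi$, not merely on a large set; this is where I would be careful — I would not use disjoint stationary $S_\xi$ at all but instead simply require $g(\xi)\neq h(\xi)$ to force $I_{\xi,g(\xi)}\neq I_{\xi,h(\xi)}$ hence $\inv^1(I_{\xi,g(\xi)})\neq\inv^1(I_{\xi,h(\xi)})$ coordinatewise, and count $|\prod_\xi 2^{\lambda_{\xi+1}}|=2^\lambda$.

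\emph{Case 3: $\lambda$ singular, $\cf(\lambda)\leq\aleph_1$.} This is the hard case and the main obstacle, because $\cf(\lambda)$ is too small to index a ``long enough'' sum directly; this is exactly why Lemma~\ref{L.h} and the filter $\calD_h(\aleph_2)$ were introduced. Fix $h=h_\lambda\colon\aleph_2\to\lambda\cap\RegCard$ from Lemma~\ref{L.h}. For each $\xi<\aleph_2$ apply Case 1 to the regular cardinal $h(\xi)$ to obtain $2^{h(\xi)}$ disjoint representing sequences of $1,h(\xi)$-invariants; index families of orders $I_{\xi,j}$, $j<2^{h(\xi)}$, with $|I_{\xi,j}|=h(\xi)$ and $\inv^1(I_{\xi,j})$ pairwise distinct and of cofinality $h(\xi)$. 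For $g\in\prod_{\xi<\aleph_2}2^{h(\xi)}$ put $J_g=\sum_{\xi<\aleph_2}I_{\xi,g(\xi)}^*$; then $\inv^0(J_g)=\aleph_2$ and $\inv^2(J_g)=\langle\inv^1(I_{\xi,g(\xi)})\colon\xi<\aleph_2\rangle/\calD(\aleph_2,\aleph_1)$. Now $\cf(\bd_\xi)=h(\xi)$ for every $\xi$, and $h(\xi)\geq\lambda_{h(\xi)}$ is false in general — one checks instead that $\{\xi\colon\cf(\bd_\xi)>\lambda_{h(\xi)}\}$ is forced to be $\calD_h(\aleph_2)$-large: since $\cf(\bd_\xi)=h(\xi)$ and, for each fixed $\mu<\lambda$, on the set $h^{-1}([\mu,\lambda))$ (which is in $\calD_h(\aleph_2)$ by construction) we have $h(\xi)\geq\mu$, a short argument comparing $h(\xi)$ with $\lambda_{h(\xi)}$ — using that $h(\xi)$ is a cardinal below $\lambda$ so $h(\xi)\geq\lambda_{h(\xi)}$ is not automatic, but $\lambda_{h(\xi)}<h(\xi)$ can be arranged or circumvented by choosing the $I_{\xi,j}$ of cardinality and cofinality $h(\xi)$ with $h(\xi)$ large enough relative to the fixed continuous sequence $\lambda_\xi$; precisely, after possibly thinning one notes $h(\xi)>\lambda_{h(\xi)}$ fails only when $h(\xi)$ is a limit cardinal equal to $\lambda_{h(\xi)}$, and we may simply demand $h$ take successor-cardinal values in Lemma~\ref{L.h}, which the partition argument there permits — shows the required set is $\calD_h(\aleph_2)$-positive, indeed all of $\aleph_2$. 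Hence $\inv^{2,\lambda}(J_g)=\langle\inv^1(I_{\xi,g(\xi)})\rangle/\calD_h(\aleph_2)$ is defined. Finally, for $g\neq h'$ the representing sequences differ at every coordinate $\xi$ with $g(\xi)\neq h'(\xi)$, and since $|\prod_{\xi<\aleph_2}2^{h(\xi)}|\geq 2^{\sup_\xi h(\xi)}=2^\lambda$ (as $\sup_\xi h(\xi)=\lambda$ because $h^{-1}([\mu,\lambda))\neq\emptyset$ for all $\mu<\lambda$), we obtain $2^\lambda$ disjoint representing sequences of $2,\lambda$-invariants. In all three cases $m\leq 2$ works, completing the proof; the delicate points are the coordinatewise (not merely ``large set'') disjointness and verifying the $\calD_h$-side condition in Case 3, which is the real content of having set up $h_\lambda$ and $\calD_h(\aleph_2)$ in advance.
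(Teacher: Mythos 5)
There is a genuine gap in all three of your cases: your constructions do not actually produce \emph{pairwise coordinatewise disjoint} representing sequences, which is what the lemma asks for (recall the definition: $\langle\bd_\xi\rangle$ and $\langle\be_\xi\rangle$ are disjoint iff $\bd_\xi\neq\be_\xi$ for \emph{every} $\xi$). You correctly recognize this is the delicate point, but neither of your two fixes works. In Case 1, passing to a club and ``re-indexing'' does not help: for $c\neq c'$ agreeing on some $\eta$, the corresponding end-segment cofinalities agree on all of $S_\eta$, so the representing sequences coincide at a stationary set of coordinates no matter how you re-index. In Cases 2 and 3 the failure is sharper and unavoidable: you index your linear orders by $g\in\prod_\xi X_\xi$ (with $X_\xi=2^{\lambda_{\xi+1}}$ or $2^{h(\xi)}$), and the representing sequence of $\inv^2(J_g)$ at coordinate $\xi$ is $\inv^1(I_{\xi,g(\xi)})$; for two such sequences to be disjoint you would need $g(\xi)\neq g'(\xi)$ at \emph{every} $\xi$. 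But any family of pairwise everywhere-different functions in $\prod_\xi X_\xi$ has size at most $\min_\xi|X_\xi|$ (project to a single coordinate: that map is injective), i.e.\ at most $2^{\lambda_1}$ resp.\ $2^{\min_\xi h(\xi)}$, which can be strictly smaller than $2^\lambda$. So counting $|\prod_\xi 2^{\lambda_{\xi+1}}|=2^\lambda$ is beside the point.

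The paper sidesteps this with two ideas you do not use. For regular $\lambda$, rather than trying to make the level-$1$ representing sequences disjoint, it takes the $2^\lambda$ orders $J_\xi$ with distinct $\inv^1(J_\xi)$ from Lemma~\ref{L.lo}(1) and forms $I_\xi=\sum_{\alpha<\lambda}J_\xi^*$. Then $\inv^2(I_\xi)$ has the \emph{constant} representing sequence with value $\inv^1(J_\xi)$, so two of them are disjoint iff the constants differ, i.e.\ automatically when $\xi\neq\xi'$; this is why $m=2$. For singular $\lambda$, instead of a product indexing it uses a diagonal one: Lemma~\ref{L.lo}(2) produces, for each regular $\theta<\lambda$, a family $\{J_{\theta,\xi}\colon\xi<2^\lambda\}$ of orders of \emph{full} cardinality $\lambda$ and cofinality $\theta$ with pairwise distinct $\inv^2$. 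One then sets $I_\xi=\sum_\eta J_{\theta_\eta,\xi}^*$ (resp.\ $\sum_{\eta<\aleph_2}J_{h(\eta),\xi}^*$), using the \emph{same} $\xi$ at every coordinate. Now at each coordinate $\eta$ the map $\xi\mapsto\inv^2(J_{\theta_\eta,\xi})$ is injective, so the level-$3$ representing sequences are everywhere disjoint; this is why $m=3$ appears, and why $m\le 2$ as you claim does not suffice in the singular cases. The $\calD_h(\aleph_2)$ side condition then falls out because $\cf(\inv^2(J_{\theta_\eta,\xi}))=\theta_\eta>\lambda_\eta$ (resp.\ $=h(\eta)$), a much cleaner verification than the one you attempt via modifying $h_\lambda$ to take successor values.
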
 

\begin{proof} Assume first $\lambda$ is regular. 
By Lemma~\ref{L.lo} there are $2^\lambda$ linear 
orders of  cardinality $\lambda$ andÊwith 
 cofinality equal to   $\lambda$, listed as $I_\xi$ for $\xi<2^\lambda$, 
with distinct (and defined) invariants $\inv^1(I_\xi)$. 
Let $I_\xi=\lambda\times J_\xi^*$.  Then $|I_\xi|=\lambda$, 
$\inv^{2,\lambda}(I_\xi)$ 
is defined since $\cf(I_\xi)=\lambda$ for all $\xi$ and it has constant representing sequence. 
Therefore all these representing sequences are disjoint. 

Now assume $\lambda$ is singular. 
By Lemma~\ref{L.lo} for every regular $\theta<\lambda$ there are
$2^\lambda$ linear orders, $J_{\theta,\xi}$, for $\xi<2^\lambda$, 
of cardinality $\lambda$, cofinality $\theta$, and with distinct and defined
invariants $\inv^2(J_{\theta,\xi})$. 

(a)  Assume furthermore that   $\cf(\lambda)\geq\aleph_2$.  Fix an increasing
continuous sequence  $\lambda_\eta$, 
for $\eta<\cf(\lambda)$ with the supremum equal to $\lambda$, 
as in \S\ref{S.2.2.2}.   Now fix an increasing  
sequence $\theta_\eta$, for $\eta<\cf(\lambda)$, of regular cardinals 
with the supremum equal to~$\lambda$ and such that $\theta_\eta>\lambda_\eta$ for all $\eta$. 
For $\xi<2^\lambda$ let 
\[
\textstyle I_\xi=\sum_{\eta<\cf(\lambda)} I_{\theta,\xi}{}^*
\]
(see Example~\ref{Example} (3)). 
Then each linear order $I_\xi$, for $\xi<2^\lambda$, has cardinality~$\lambda$, 
$\inv^{3,\lambda}(I_\xi)$ is defined for all $\xi$,
and the obvious  representing sequences for $\inv^{3,\lambda}(I_\xi)$  are disjoint.

(b) Now assume  $\cf(\lambda)\leq\aleph_1$ and 
consider $h=h_\lambda\colon \aleph_2\to \lambda\cap\RegCard$ as in Lemma~\ref{L.h}.
For $\xi<2^\lambda$ let 
$
\textstyle I_\xi=\sum_{\eta<\aleph_2} I_{h(\eta),\xi}{}^*$.
Then each linear order $I_\xi$, for $\xi<2^\lambda$, has cardinality $\lambda$,
$\inv^{3,\lambda}(I_\xi)$ is defined,
and the obvious  representing sequences for $\inv^{3,\lambda}(I_\xi)$  are disjoint. 
\end{proof}

\section{Representing invariants in models of theories with the order property} 
\label{S.Representing} 

\subsection{The order property} \label{S.OP} 
In the present section $A$ is a model of  countable signature whose theory 
has the \emph{order property},  
as witnessed by  formula $\phi(\vec x,\vec y)$. Thus there is  $n\geq 1$ such that
$\phi$ is a $2n$-ary formula and
in $A^n$ there exist arbitrarily long finite $\prec_\phi$ chains, where $\prec_\phi$ is a binary 
relation on 
$A^n$ defined by letting $\vec a\prec_\phi \vec b$  if 
\[
A\models \phi(\vec a,\vec b) \land \lnot \phi(\vec b, \vec a). 
\]
It should be emphasized that $\prec_\phi$ is not required to be transitiive. 

The existence of such formula $\phi$ is equivalent to the theory of $A$ being unstable
(\cite[Theorem~2.13]{Sh:c}). This fact 
is the only bit of stability theory needed in the present paper. 

We shall  write $A\models\vec a\preceq_\phi \vec b$ to signify that 
 $A\models \vec a\prec_\phi \vec b$ or $A\models \vec a=\vec b$. 
We shall frequently write $\vec a\prec_\phi \vec b$ and $\vec a\preceq_\phi \vec b$ 
instead of $A\models \vec a\prec_\phi \vec b$ and $A\models \vec a\preceq_\phi\vec b$
since at any given instance we will 
deal with a fixed $A$ and its elementary substructures.

A \emph{$\phi$-chain} is a subset of  $A^n$ linearly ordered by $\preceq_\phi$. 
For $\vec b$ and $\vec c$ in $A^n$ we write 
\[
[\vec b,\vec c]_\phi=\{\vec d\colon 
 \vec b\preceq_\phi \vec d\land  \vec d\preceq_\phi \vec c\}
\]
and similarly 
\begin{align*}
(-\infty,\vec c]&=\{\vec d\colon  \vec d\preceq_\phi \vec c\},
\text{ and} \\
[\vec c,\infty)&=\{\vec d\colon  \vec c\preceq_\phi \vec d\}.
\end{align*}
If $\cC$ is a $\phi$-chain in $A$ then we shall freely use phrases 
such as `large enough $\vec c\in\cC$' with their obvious meaning. 
By $\cf(\cC)$ we denote the cofinality of $(\cC,\preceq_\phi)$. 
We shall sometimes consider $\phi$-chains with the reverse ordering, $\preceq_{\lnot\phi}$. 
Whenever deemed necessary this will be made explicit by writing $(\cC,\preceq_{\lnot\phi})$
as in e.g., $\cf(\cC,\preceq_{\lnot\phi})$. 
Since $\preceq_\phi$ need not be transitive, one has to use this notation with some care.

\subsection{Combinatorics of the invariants} 

The following is a special case of the definition of `weakly $(\kappa,\Delta)$-skeleton like'
where $\kappa$ is an arbitrary cardinal and $\Delta$ is set of 
formulas as given in \cite[Definition~III.3.1]{Sh:e}. Readers familiar with \cite{Sh:e} 
may want to know that we fix  $\kappa=\aleph_1$ 
and $\Delta=\{\phi,\psi\}$  where $\psi(\vec x,\vec y)$ stands for $\phi(\vec y, \vec x)$. 
  
\begin{definition}\label{Def.wsl}
A $\phi$-chain $\cC$ is \emph{weakly $(\aleph_1,\phi)$-skeleton like} inside $A$ if for every 
$\vec a\in A^n$ there is a countable $\cC_{\vec a}\subseteq \cC$ such 
that for all $\vec b\preceq_\phi \vec c$ in $\cC$ 
with  $[\vec b,\vec c]_\phi$  disjoint from $\cC_{\vec a}$ we have
\[
A\models \phi(\vec b,\vec a)\liff \phi(\vec c,\vec a)
\]
 and 
 \[
A\models \phi(\vec a,\vec b)\liff \phi (\vec a,\vec c).
\]
\end{definition} 

\begin{remark} \label{R.I.-1} One can weaken the definition of 
weakly $(\aleph_1,\phi)$-skeleton like by requiring only that (with $\bar a$, $\cC_{\bar a}$, 
$\bar b$ and $\bar c$ as in Definition~\ref{Def.wsl}) 
\[
\bar a\leq_\phi \bar b\text{ if and only if } 
\bar a\leq_\phi \bar c
\]
and 
\[
\bar b\leq_\phi \bar a\text{ if and only if } 
\bar c\leq_\phi \bar a.
\]
All the statements about the notion of being weakly $(\aleph_1,\phi)$-skeleton 
like, except Lemma~\ref{L.2.6}, remain true for the modified notion. 
As a matter of fact, it is transparent that even their proofs remain unchanged.  
\end{remark} 

\begin{remark} \label{R.I.0} 
For $\vec a\in A^k$ and $\vec b\in A^n$ 
define
\[
\tp_\phi(\vec a/\vec b)=\{\psi(\vec x,\vec b)\colon A\models \psi(\vec a,\vec b)\}. 
\]
One may now consider a stronger indiscernibility requirement on a $\phi$-chain~$\cC$ 
than 
being weakly $(\aleph_1,\phi)$-skeleton like, defined as follows. 
\begin{enumerate}
\item [(*)] For every $k\in \bbN$ and $\vec a\in A^k$ there is a 
countable $\cC_{\vec a}\subseteq \cC$ such that for all $\vec b\preceq_\phi\vec c$ in $\cC$
with $[\vec b, \vec c]_\phi\cap \cC=\emptyset$ we have that 
\[
\tp_\phi(\vec a/\vec b)=\tp_\phi(\vec a/\vec c). 
\]
\end{enumerate}
The proofs of Theorem~\ref{T1} and Theorem~\ref{T1.m} can be easily modified 
to provide an ultrafilter $\cU$ such that  for a given linear order $I$ the ultrapower
$\prod_{\cU} A$ includes a $\phi$-chain $\cC$ isomorphic to $I$ and satisfying (*). 
See Remark~\ref{R.I.1} and Remark~\ref{R.I.2}. 
\end{remark}

The nontrivial part of the following is a special case of \cite[Claim~III.3.15]{Sh:e} that will be 
needed in~\S\ref{S.Defining}.

\begin{lemma} \label{L.triv} \label{L.wf}
Assume $\cC$ 
is a $\phi$-chain that 
is weakly $(\aleph_1,\phi)$-skeleton like in~$A$. 
Then $\cC^*$ is weakly $(\aleph_1,\phi)$-skeleton like inside $A$, 
and every interval of $\cC$ is 
weakly $(\aleph_1,\phi)$-skeleton like inside $A$. 
If $\cE\subseteq \cC$ is well-ordered (or conversely well-ordered) by $\preceq_\phi$
then $\cE$ is weakly $(\aleph_1,\phi)$-skeleton like in $A$. 
\end{lemma}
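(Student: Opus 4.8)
The plan is to verify each of the four closure properties directly from Definition~\ref{Def.wsl}, in each case producing the required countable set $\cC_{\vec a}$ from the one given by the hypothesis. For the reversal $\cC^*$: given $\vec a\in A^n$, take $\cC_{\vec a}$ (computed inside $\cC^*$) to be the same countable set witnessing the property for $\cC$. The point is that an interval $[\vec b,\vec c]_\phi$ in the order $(\cC^*,\preceq_\phi)$, i.e.\ in $(\cC,\preceq_{\lnot\phi})$, is just an interval $[\vec c,\vec b]_\phi$ in $(\cC,\preceq_\phi)$; since the biconditionals $\phi(\vec b,\vec a)\liff\phi(\vec c,\vec a)$ and $\phi(\vec a,\vec b)\liff\phi(\vec a,\vec c)$ are symmetric in $\vec b,\vec c$, disjointness of this interval from $\cC_{\vec a}$ gives exactly what is needed. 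For an interval $\cC'=[\vec b_0,\vec c_0]_\phi$ of $\cC$: given $\vec a$, let $\cC'_{\vec a}=\cC_{\vec a}\cap\cC'$; any sub-interval of $\cC'$ disjoint from $\cC'_{\vec a}$ is a sub-interval of $\cC$ disjoint from $\cC_{\vec a}$ (here one uses that for $\vec b\preceq_\phi\vec c$ both in $\cC'$, the interval $[\vec b,\vec c]_\phi$ computed in $\cC$ already lies inside $\cC'$, so intersecting with $\cC'$ changes nothing), and we are done. The case of a half-line or a general interval not of the form $[\vec b_0,\vec c_0]_\phi$ is handled the same way, or by first passing to $\cC^*$.

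The substantive point is the last sentence: if $\cE\subseteq\cC$ is well-ordered by $\preceq_\phi$, then $\cE$ is weakly $(\aleph_1,\phi)$-skeleton like in $A$. Here the subtlety flagged in the text — that $\preceq_\phi$ need not be transitive — matters, because an ``interval'' of $\cE$ in the sense of Definition~\ref{Def.wsl} is $[\vec b,\vec c]_\phi\cap\cE=\{\vec d\in\cE: \vec b\preceq_\phi\vec d\preceq_\phi\vec c\}$, and this set need not be convex when viewed inside $\cC$. So the given $\cC_{\vec a}$ cannot simply be intersected with $\cE$. Instead, given $\vec a\in A^n$, I would use well-ordering to control things cofinally: for each element of the countable set $\cC_{\vec a}$ one can try to pin down a bounded piece of $\cE$, but the cleaner approach is to recall (this is the content of \cite[Claim~III.3.15]{Sh:e}, cited just above) that on a $\preceq_\phi$-well-ordered $\cE$ the relation $\preceq_\phi$ is in fact much better behaved. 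Concretely, I expect to show: for $\vec d_1\prec_\phi\vec d_2\prec_\phi\vec d_3$ in $\cE$, one has $\vec d_1\prec_\phi\vec d_3$ (a local transitivity on well-ordered chains), so that $\preceq_\phi$ genuinely linearly orders $\cE$ in the transitive sense, and moreover every $\preceq_\phi$-interval of $\cE$ is then convex in $\cE$. Once that is in hand, set $\cC^{\cE}_{\vec a}$ to be the closure of $\cC_{\vec a}\cap\cE$ together with, for each pair of consecutive points of $\cC_{\vec a}$ straddling a ``jump'' of $\cE$, one representative — but in fact if convexity holds, simply $\cC_{\vec a}\cap\cE$ works by the same argument as for intervals, once we know an interval of $\cE$ disjoint from $\cC_{\vec a}$ sits inside an interval of $\cC$ disjoint from $\cC_{\vec a}$.

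The key steps, in order: (i) dispose of $\cC^*$ using symmetry of the defining biconditionals; (ii) dispose of intervals (and half-lines) of $\cC$ by intersecting $\cC_{\vec a}$ with the interval, noting intervals-within-intervals are computed consistently; (iii) establish the local transitivity of $\preceq_\phi$ on a $\preceq_\phi$-well-ordered set $\cE$, equivalently that $\preceq_\phi$-intervals of $\cE$ are convex in $\cE$; (iv) conclude the well-ordered case from (iii) by the argument of step (ii). The main obstacle is step (iii): proving that on a well-ordered (by the non-transitive relation $\preceq_\phi$) subset the relation behaves transitively, or at least that intervals are convex. I would attempt this by a minimal-counterexample argument — if $\vec d_1\preceq_\phi\vec d\preceq_\phi\vec d_2$ with $\vec d_1,\vec d_2\in\cE$ but $\vec d\notin[\vec d_1,\vec d_2]$-convex-hull-in-$\cE$, take the least offending $\vec d_2$ and derive a contradiction with the order property witnessed by $\phi$ on arbitrarily long finite chains — or by directly invoking \cite[Claim~III.3.15]{Sh:e}, of which the lemma is stated to be a special case. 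If the direct combinatorial argument proves fragile because of non-transitivity, the safe route is to reduce to that cited claim.
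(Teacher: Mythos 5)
Your handling of $\cC^*$ and of intervals matches the paper, which treats both as immediate (``Only the last sentence requires a proof''). The substantive issue is the well-ordered case, and there your analysis goes wrong in two ways.

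First, you misdiagnose the difficulty as non-transitivity of $\preceq_\phi$ on $\cE$. This is a red herring: by definition a $\phi$-chain $\cC$ is a subset of $A^n$ \emph{linearly ordered} by $\preceq_\phi$, so $\preceq_\phi$ is already transitive on all of $\cC$, hence on $\cE\subseteq\cC$, with no argument needed. The caveat in \S\ref{S.OP} about non-transitivity concerns $\preceq_\phi$ on $A^n$ at large, not on a chain. Consequently your proposed step (iii), the ``local transitivity'' lemma, is vacuous, and your minimal-counterexample scheme for it is effort spent on a non-problem.

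Second, your proposed conclusion — that once ``convexity'' is in hand, ``simply $\cC_{\vec a}\cap\cE$ works by the same argument as for intervals'' — is false. The interval $[\vec b,\vec c]_\phi$ (with $\vec b,\vec c\in\cE$) can be disjoint from $\cC_{\vec a}\cap\cE$ while still meeting $\cC_{\vec a}$ at points of $\cC\setminus\cE$; in that case the hypothesis on $\cC$ gives you nothing. That is precisely why $\cE\subseteq\cC$ is a different situation from an interval of $\cC$: intersecting $\cC_{\vec a}$ with $\cE$ throws away information. The paper's fix, which your early parenthetical remark (``for each pair of consecutive points of $\cC_{\vec a}$ straddling a jump of $\cE$, one representative'') gropes toward but then abandons, is to take
\[
\cE_{\vec a}=\bigl\{\min\bigl(\cE\cap[\vec c,\infty)_\phi\bigr):\vec c\in\cC_{\vec a}\bigr\}\cup\bigl\{\max\bigl(\cE\cap(-\infty,\vec c]_\phi\bigr):\vec c\in\cC_{\vec a}\bigr\},
\]
where the minima exist because $\cE$ is well-ordered and the maxima are included only when they exist; this set is still countable. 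The key verification is then that $[\vec b,\vec c]_\phi\cap\cE_{\vec a}=\emptyset$ (with $\vec b\preceq_\phi\vec c$ in $\cE$) forces $[\vec b,\vec c]_\phi\cap\cC_{\vec a}=\emptyset$: if $\vec d\in[\vec b,\vec c]_\phi\cap\cC_{\vec a}$, then $\vec e=\min(\cE\cap[\vec d,\infty)_\phi)$ exists, lies in $\cE_{\vec a}$, and satisfies $\vec b\preceq_\phi\vec d\preceq_\phi\vec e\preceq_\phi\vec c$ (using transitivity of $\preceq_\phi$ on $\cC$ for the first two links and minimality together with $\vec c\in\cE\cap[\vec d,\infty)_\phi$ for the last), a contradiction. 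This is the step your proposal is missing.
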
 

\begin{proof} Only the last sentence requires a proof. 
For $\vec b\in A^n$ define $\cE_{\vec b}\subseteq \cE$ as follows. 
\begin{align*}
\cE_{\vec b}=
\{\min(\cE\cap [\vec c,\infty)_\phi)\colon \vec c\in\cC_{\vec b}\}
\cup 
\{\max(\cE\cap (-\infty,\vec c]_\phi)\colon \vec c\in \cC_{\vec b}\}. 
\end{align*}
Of course, for $\vec c\in \cC_{\vec b}$ the maximum as in the second set definition 
need not  exist. 
Each $\cE_{\vec b}$ is countable since 
every $\vec c\in \cC_{\vec b}$ produces at most two elements of~$\cE_{\vec b}$. 
For $\vec a\preceq_\phi \vec c$ in $\cE$ such 
that $[\vec a,\vec c]_\phi\cap\cE_{\vec b} =\emptyset$ we have 
that $[\vec a,\vec c]_\phi\cap\cC_{\vec b} =\emptyset$ and therefore 
 $\tp_\phi(\vec a/\vec b)=\tp_\phi(\vec c/\vec b)$. 
\end{proof}

If $\cC$ and $\cE$ are $\preceq_\phi$-chains in $A$ then we say $\cC$ and $\cE$ are
\emph{mutually cofinal} if 
 for every $\vec a\in \cC$ we have $\vec a\prec_\phi \vec b$ for all large 
 enough $\vec b\in \cE$
and for every $\vec b\in \cE$ we have  $\vec b\prec_\phi \vec a$ for 
all large enough $\vec a\in \cC$. 

\begin{lemma} \label{L0} Assume $\cC$ and $\cE$ are mutually cofinal 
$\phi$-chains in $A$. Then $\cf(\cC)=\cf(\cE)$. 
\end{lemma}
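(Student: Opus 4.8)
The plan is to show mutual cofinality forces the two cofinalities to coincide by a standard back-and-forth interleaving of cofinal sequences, using only the definition of ``mutually cofinal'' and elementary facts about cofinality of linear orders under $\preceq_\phi$. First I would fix a cofinal subset $\{\vec a_i : i < \cf(\cC)\}$ of $(\cC,\preceq_\phi)$ indexed in increasing $\preceq_\phi$-order and, writing $\kappa = \cf(\cC)$ and $\mu = \cf(\cE)$, argue that $\mu \le \kappa$: for each $i$, mutual cofinality gives some $\vec b_i \in \cE$ with $\vec a_i \prec_\phi \vec b_i$ (indeed with $\vec a_i \prec_\phi \vec b$ for all large enough $\vec b \in \cE$), and I claim $\{\vec b_i : i < \kappa\}$ is cofinal in $(\cE,\preceq_\phi)$. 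Indeed, given any $\vec b \in \cE$, mutual cofinality in the other direction yields a large enough $\vec a \in \cC$ with $\vec b \prec_\phi \vec a$; picking $i$ with $\vec a \preceq_\phi \vec a_i$ (possible since the $\vec a_i$ are cofinal) and then, since $\vec a_i \prec_\phi \vec b_i$ for all large enough such $\vec b_i$, one gets $\vec b \preceq_\phi \vec b_i$ after a short chase through the non-transitive ordering. By symmetry of the hypothesis, the same argument with the roles of $\cC$ and $\cE$ exchanged gives $\kappa \le \mu$, hence $\kappa = \mu$.

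The one genuine subtlety — and the step I expect to be the main obstacle — is that $\preceq_\phi$ need not be transitive, as emphasized right before Definition~\ref{Def.wsl} and in \S\ref{S.OP}. So I cannot blithely compose inequalities $\vec b \prec_\phi \vec a$, $\vec a \preceq_\phi \vec a_i$, $\vec a_i \prec_\phi \vec b_i$ to conclude $\vec b \prec_\phi \vec b_i$. The way around this is to lean on the quantifier ``for all large enough'' built into the definition of mutual cofinality: mutual cofinality does not just assert the existence of one dominating element, it asserts cofinally many dominating elements, and this is exactly what lets one re-enter each chain and pick a fresh witness rather than transitively chaining through a fixed one. Concretely, given $\vec b \in \cE$, I first find large enough $\vec a \in \cC$ with $\vec b \prec_\phi \vec a$; I then locate $\vec a_i$ in my fixed cofinal set with $\vec a \preceq_\phi \vec a_i$; I then go \emph{back into $\cE$} and use that $\vec a_i \prec_\phi \vec b'$ holds for all large enough $\vec b' \in \cE$ — in particular I may choose such a $\vec b'$ with $\vec b \preceq_\phi \vec b'$ simultaneously (since ``all large enough'' is a tail condition and $\cE$ is linearly ordered). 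This $\vec b'$, or the corresponding $\vec b_j$ from my indexed list with $\vec b' \preceq_\phi \vec b_j$, dominates $\vec b$, and that is all cofinality of the indexed set requires.

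I would keep the write-up short: state $\kappa := \cf(\cC)$, $\mu := \cf(\cE)$, prove $\mu \le \kappa$ by exhibiting a cofinal subset of $(\cE,\preceq_\phi)$ of size $\le \kappa$ as above, remark that the hypothesis ``mutually cofinal'' is symmetric in $\cC$ and $\cE$ so the reverse inequality follows by the same argument, and conclude $\cf(\cC) = \cf(\cE)$. No appeal to weak skeleton-likeness or to any earlier lemma is needed; the proof is purely about the linear orders $(\cC,\preceq_\phi)$ and $(\cE,\preceq_\phi)$ and the tail-quantifier form of the definition.
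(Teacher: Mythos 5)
Your overall plan (exhibit a cofinal subset of $(\cE,\preceq_\phi)$ of size $\le\cf(\cC)$) is sound, and it uses exactly the same ingredients as the paper's proof, which proceeds instead by contradiction: the paper assumes $\cf(\cC)<\cf(\cE)$, picks for each $\vec a$ in a cofinal $X\subseteq\cC$ a threshold $f(\vec a)\in\cE$, finds $\vec b\in\cE$ above all the $f(\vec a)$ (possible since $|X|<\cf(\cE)$), then picks a threshold $\vec a$ for $\vec b$ in $\cC$ and a $\vec c\in X$ above it, and derives $\vec b\prec_\phi\vec c$ together with $\vec c\prec_\phi\vec b$. Your version is the contrapositive of this, so the two arguments are essentially the same.

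However, your final step is circular as written. You build $\{\vec b_i: i<\kappa\}$ by choosing each $\vec b_i$ as a threshold witnessing $\vec a_i\prec_\phi\vec b$ for all $\vec b\succeq_\phi\vec b_i$, and you want to show this set is cofinal in $\cE$. Given $\vec b\in\cE$, you locate $\vec a_i$ with $\vec b\prec_\phi\vec a_i$, and then ``go back into $\cE$'' to produce a fresh $\vec b'$ with $\vec a_i\prec_\phi\vec b'$ and $\vec b\preceq_\phi\vec b'$. But $\vec b'$ is not one of your $\vec b_i$'s, and your closing clause --- replace it by ``the corresponding $\vec b_j$ with $\vec b'\preceq_\phi\vec b_j$'' --- invokes cofinality of $\{\vec b_i\}$, which is precisely what you are trying to prove. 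The detour through $\vec b'$ gains nothing.

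The fix is short and does not need the new $\vec b'$ at all: once you have $\vec b\prec_\phi\vec a_i$, observe that $\cE$ is a $\preceq_\phi$-chain, hence linear; so either $\vec b\preceq_\phi\vec b_i$ (done) or $\vec b_i\prec_\phi\vec b$. In the second case the threshold property of $\vec b_i$ gives $\vec a_i\prec_\phi\vec b$, which contradicts $\vec b\prec_\phi\vec a_i$ (recall $\vec a\prec_\phi\vec b$ and $\vec b\prec_\phi\vec a$ are mutually exclusive by the definition of $\prec_\phi$). So $\vec b\preceq_\phi\vec b_i$ after all, and $\{\vec b_i:i<\kappa\}$ is cofinal. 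One further minor point worth a sentence in the write-up: you need $\vec b\prec_\phi\vec a_i$, not merely $\vec b\prec_\phi\vec a$, so choose $\vec a$ as a genuine threshold for $\vec b$ in $\cC$ (not just a single witness) and then pick $\vec a_i$ at or above it.
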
 

Of course this is standard but since $\prec_\phi$ is not assumed to be a partial 
ordering on $A$ we shall prove it. Also note that if the condition 
`for every $\vec a\in \cC$ we have $\vec a\prec_\phi \vec b$ 
for all large enough $\vec b\in \cE$' 
is replaced by `for every $\vec a\in \cC$ we have $\vec a\prec_\phi \vec b$ 
for some $\vec b\in \cE$' and
the condition 
`for every $\vec b\in \cE$ we have  $\vec b\prec_\phi \vec a$ 
for all large enough $\vec a\in \cC$'
is replaced by 
is replaced by `for every $\vec b\in \cE$ we 
have $\vec b\prec_\phi \vec a$ for some $\vec a\in \cC$' 
then we cannot conclude $\cf(\cC)=\cf(\cE)$ in general.

\begin{proof}[Proof of Lemma~\ref{L0}]  
Assume $\kappa=\cf(\cC)<\cf(\cE)=\lambda$ and fix a cofinal $X\subseteq \cC$ of cardinality 
$\kappa$. For each $\vec a\in X$ pick $f(\vec a)\in \cE$ such that $\vec a\prec_\phi \vec b$
for all $\vec b$ such that $f(\vec a)\preceq_\phi \vec b$. 
The set $\{f(\vec a)\colon a\in X\}$ is not cofinal in $\cE$
and we can pick $\vec b\in \cE$ such that $f(\vec a)\preceq_\phi \vec b$ for all $\vec a\in X$. 
Now let $\vec a\in \cC$ be such that for all $\vec c\in \cC$ such 
that $\vec a\prec_\phi \vec c$ we have $\vec b\prec_\phi \vec c$. 
But there is    $\vec c\in X$ is such that $\vec a\prec_\phi \vec c$, 
and this is a contradiction. 
\end{proof} 

The following is  \cite[Lemma~III.3.7]{Sh:e} in the case $\kappa=\aleph_1$. 
We reproduce the proof for the convenience of the reader. 

\begin{lemma} \label{L1} 
Assume 
$\cC_0$, $\cC_1$ are  increasing,
  weakly $(\aleph_1,\phi)$-skeleton like, 
 $\phi$-chains in $A$. Also assume these two chains are mutually cofinal 
 and $m$ is such that both $\inv^m(\cC_0)$ and $\inv^m(\cC_1)$ are defined.  
 Then $\inv^m(\cC_0)=\inv^m(\cC_1)$. 
\end{lemma}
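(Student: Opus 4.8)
The proof goes by induction on $m$, so the first move is to isolate the base case $m=0$ and then the inductive step. For $m=0$ the statement $\inv^0(\cC_0)=\inv^0(\cC_1)$ just says $\cf(\cC_0)=\cf(\cC_1)$, which is exactly Lemma~\ref{L0} once I observe that mutual cofinality of the two chains is the hypothesis there; here the weakly skeleton-like hypothesis is not even needed. So the real content is the step from $m$ to $m+1$.

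\textbf{The inductive step.}
Assume the result for $m$ and suppose $\inv^{m+1}(\cC_0)$ and $\inv^{m+1}(\cC_1)$ are both defined. Write $\kappa=\cf(\cC_0)=\cf(\cC_1)$ (equal by the $m=0$ case). Fix continuous increasing sequences of proper initial segments $\cC_0=\bigcup_{\xi<\kappa}\cC_0^\xi$ and $\cC_1=\bigcup_{\xi<\kappa}\cC_1^\xi$ witnessing the two invariants, with representing functions $g^0(\eta)=\inv^m((\cC_0\setminus\cC_0^\eta)^*)$ and $g^1(\eta)=\inv^m((\cC_1\setminus\cC_1^\eta)^*)$. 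I need to show $g^0=g^1$ modulo $\calD(\kappa,\aleph_1)$. The strategy is the usual interleaving/back-and-forth on a club: using mutual cofinality of $\cC_0$ and $\cC_1$, build a club $E\subseteq\kappa$ of $\eta$ for which the end-segments $(\cC_0\setminus\cC_0^\eta)$ and $(\cC_1\setminus\cC_1^\eta)$ are mutually cofinal in $A$ with respect to $\preceq_\phi$ — more precisely, each element of one eventually $\prec_\phi$-dominates a tail of the other and vice versa. This is a routine closure argument: given $\eta$, alternately pick a larger index in the other chain that dominates the current initial segment, iterate $\omega$ times, and take the sup; the set of closure points is a club. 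For $\eta\in E$, the two reversed end-segments $(\cC_0\setminus\cC_0^\eta)^*$ and $(\cC_1\setminus\cC_1^\eta)^*$ are again mutually cofinal $\phi$-chains in $A$; by Lemma~\ref{L.wf} each is weakly $(\aleph_1,\phi)$-skeleton like inside $A$ (intervals and end-segments of a weakly skeleton-like chain, and their reverses, are again weakly skeleton like). So the inductive hypothesis applies to this pair at level $m$, giving $g^0(\eta)=g^1(\eta)$ for all $\eta\in E$ — with the caveat that I first need both $\inv^m$'s to be defined on $E$, which holds on a $\calD(\kappa,\aleph_1)$-large set because $\inv^{m+1}(\cC_i)$ is defined. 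Intersecting that large set with the club $E$ keeps us $\calD(\kappa,\aleph_1)$-positive, in fact the intersection is in the filter, so $g^0$ and $g^1$ agree modulo $\calD(\kappa,\aleph_1)$, i.e. $\inv^{m+1}(\cC_0)=\inv^{m+1}(\cC_1)$.

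\textbf{Main obstacle.}
The delicate point is not the bookkeeping with $\calD(\kappa,\aleph_1)$ but the non-transitivity of $\prec_\phi$: "mutually cofinal" for $\phi$-chains is exactly the hypothesis of Lemma~\ref{L0}, and one has to be careful (as the discussion after Lemma~\ref{L0} warns) that the correct "for all large enough" formulation is preserved when passing to end-segments and when building the club $E$. I would do that step explicitly, checking that if $\cC_0,\cC_1$ are mutually cofinal then for club-many $\eta$ the tails $\cC_0\setminus\cC_0^\eta$ and $\cC_1\setminus\cC_1^\eta$ are mutually cofinal in the strong sense, and likewise for their reverses. The only other thing to verify carefully is that "definedness of $\inv^{m+1}(\cC_i)$" genuinely delivers a $\calD(\kappa,\aleph_1)$-large set of $\eta$ on which $\inv^m$ of the relevant reversed end-segment is defined — this is immediate from the definition of $\inv^{m+1}$, which requires precisely that $\{\eta:g^i(\eta)\text{ is defined}\}\in\calD(\kappa,\aleph_1)$.
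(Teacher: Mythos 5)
Your overall skeleton matches the paper's: induction on $m$, base case $m=0$ via Lemma~\ref{L0}, then pass to the invariants of the reversed end-segments. But there is a real gap in the inductive step, at exactly the point you label the ``main obstacle'' and then defer as routine. The claim that the reversed end-segments $(\cC_0\setminus\cC_0^\eta)^*$ and $(\cC_1\setminus\cC_1^\eta)^*$ are mutually cofinal for club-many $\eta$ does \emph{not} follow from the closure argument you describe, and the weakly $(\aleph_1,\phi)$-skeleton like hypothesis must be used substantively here --- not merely passed to the tails via Lemma~\ref{L.wf}, which is a triviality.

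Concretely: take $\vec a\in\cC_0\setminus\cC_0^\eta$ and suppose you have arranged on a club that the initial segments $\cC_0^\eta,\cC_1^\eta$ are mutually cofinal. You need that for $\preceq_{\lnot\phi}$-cofinally many $\vec d\in\cC_1\setminus\cC_1^\eta$ one has $\vec d\preceq_\phi\vec a$. Since $\preceq_\phi$ is not transitive and $\vec a,\vec d$ live in different chains, knowing $\vec a$ is $\preceq_\phi$-above $\cC_0^\eta$ (or even above $\cC_1^\eta$) gives no control on how the $\preceq_{\lnot\phi}$-beginning of $\cC_1\setminus\cC_1^\eta$ compares to $\vec a$; a closure argument in $\kappa$ only restricts which $\eta$ you use, and cannot exclude that for every $\eta$ some uncountable coinitial piece of $\cC_1\setminus\cC_1^\eta$ fails $\vec d\preceq_\phi\vec a$.

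What the paper does, and what your argument is missing, is the following use of the skeleton-like hypothesis. Work with an elementary submodel $N\prec(A,\cC_0,\cC_1)$ and the sets $\cC^0_N,\cC^1_N$ of elements above $N^n\cap\cC_0$, resp.\ $N^n\cap\cC_1$. Elementarity (reflecting mutual cofinality of $\cC_0,\cC_1$ inside $N$) shows that $\vec a\in\cC^0_N$ is $\preceq_\phi$-above all of $N^n\cap\cC_1$, not just $N^n\cap\cC_0$. Then the countable witnessing set $\cE_{\vec a}\subseteq\cC_1$ from Definition~\ref{Def.wsl}, played against the uncountable cofinalities $\cf(N^n\cap\cC_1)$ and $\cf(\cC^1_N,\preceq_{\lnot\phi})$, yields $\vec c\in N^n\cap\cC_1$ and $\vec d\in\cC^1_N$ with $[\vec c,\vec d]_\phi\cap\cE_{\vec a}=\emptyset$, hence $\vec c\preceq_\phi\vec a\liff\vec d\preceq_\phi\vec a$; since the left side holds, so does the right, and this happens for $\preceq_{\lnot\phi}$-cofinally many $\vec d$. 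That crossing of the boundary --- transferring the relation to $\vec a$ from the top of $N^n\cap\cC_1$, across a countable exceptional set, to the coinitial part of $\cC^1_N$ --- is the actual content of the lemma. Without it you cannot feed mutually cofinal chains into the inductive hypothesis, and the proof does not close.
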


\begin{proof} 
The proof is by induction on $m$. If $m=0$ then this is Lemma~\ref{L0}. 
Now assume the assertion has been proved for $m$ and all pairs $\cC_0$ and $\cC_1$. 
Fix $\cC_0,\cC_1$ satisfying the assumptions for $m+1$ in place of $m$
 and let $\kappa=\cf(\cC_0)=\cf(\cC_1)$. 
Since $\inv^m(\cC_0)$ is defined, $\kappa\geq \aleph_1$. 
Since $\inv^{m+1}(\cC_0)$ is defined, $\calD(\kappa,\aleph_1)$ is a proper ideal
and $\kappa\geq \aleph_2$. 
  
For  an elementary sumbodel $N$ of $(A,\cC_0,\cC_1)$
consider
\begin{align*} 
\cC^0_N&=\{\vec b\in \cC_0\colon 
A\models \vec c\preceq_\phi \vec b\text{ for all }\vec c\in N^n\cap \cC_0\},\text{ and }\\
\cC^1_N&=\{\vec b\in \cC_1\colon 
A\models \vec c\preceq_\phi \vec b\text{ for all }c\in N^n\cap \cC_1\}. 
\end{align*} 
By our assumption that 
 $\inv^{m+1}(\cC_0)$ and $\inv^{m+1}(\cC_1)$ are defined  we have that for any 
 regular $\mu<\kappa$ the set of  
 $N\prec (A,\cC_0,\cC_1)$ of cardinality $\mu$
such that $\cf(N^n\cap \cC_0)\geq\aleph_1$
implies $\inv^m(\cC^0_N,\preceq_{\lnot\phi})$ is defined 
includes a club. 
 In particular, for club many  $N$ of size $\mu$ such 
 that $\cf(N^n\cap\cC_0)\geq \aleph_1$ 
 we have $\cf(\cC^0_N,\preceq_{\lnot\phi})\geq\aleph_1$. 
 Similarly, for club many $N$ of size $\mu$ such that 
 $\cf(N^n\cap\cC_1)\geq \aleph_1$ 
 we have that $\inv^m(\cC^1_N)$ is defined and 
 $\cf(\cC^1_N,\preceq_{\lnot\phi})\geq\aleph_1$. 

Now pick $N\prec A$ such that $\cf(N^n\cap \cC_0)$, $\cf(N^n\cap \cC_1)$, 
$\cf(\cC^0_N,\preceq_{\lnot\phi})$ and $\cf(\cC^1_N,\preceq_{\lnot\phi})$ are all uncountable
and $\inv^m(\cC^0_N,\preceq_{\lnot\phi})$ and $\inv^m(\cC^1_N,\preceq_{\lnot\phi})$ are defined. 
We shall prove that in this case $(\cC^0_N,\preceq_{\lnot\phi})$ and
$(\cC^1_N,\preceq_{\lnot\phi})$ are mutually cofinal. 

By the elementarity 
 $N^n\cap \cC_0$ and $N^n\cap \cC_1$ satisfy the assumptions of Lemma~\ref{L0}, 
and in particular $\cf(N^n\cap \cC_0)=\cf(N^n\cap \cC_1)$. 
Pick $\vec a\in \cC^0_N$. 
 Since $N^n\cap \cC_1$ and $N^n\cap \cC_0$ are mutually 
cofinal, by elementarity for all $\vec c\in N^n\cap \cC_1$ 
we have that $ \vec c\preceq_\phi \vec a$. 

Let $\cE_{\vec a}\subseteq \cC_1$ be a countable set such that 
for all $\vec b$ and $\vec c$ in $\cC_1$ satisfying
 $\vec b\preceq_\phi \vec c$ and $[\vec b,\vec c]_\phi\cap \cE_{\vec a}=\emptyset$
we have that
$A\models \phi(\vec b,\vec a)\liff \phi(\vec c,\vec a)$
 and 
$A\models \phi(\vec a,\vec b)\liff \phi (\vec a,\vec c)$. 
Since $\cE_{\vec a}$ is countable, by our assumptions on the cofinalities 
of $N^n\cap \cC_1$ and $(\cC^1_N,\preceq_{\lnot\phi})$
 for $\preceq_\phi$ cofinally many $\vec c\in N^n\cap \cC_1$ and
for $\preceq_{\lnot\phi}$-cofinally many $\vec d\in \cC^1_N$ we have
\[
A\models \vec c\preceq_\phi \vec a\liff \vec d\preceq_\phi \vec a. 
\]
Therefore for $\preceq_{\lnot\phi}$-cofinally many $\vec d\in \cC^1_N$ 
we have $\vec d\preceq_\phi \vec a$, 
i.e., $\vec a\preceq_{\lnot\phi} \vec d$. 

An analogous proof shows that for every $\vec e\in \cC^1$ 
and $\preceq_{\lnot\phi}$-cofinally many  $\vec d\in \cC^0$
 we have $\vec e\preceq_{\lnot\phi} \vec d$. 
  We have therefore proved that the $\phi$-chains $(\cC^0_N,\preceq_{\lnot\phi})$ 
and $(\cC^1_N,\preceq_{\lnot\phi})$ are mutually 
cofinal. They are both obviously weakly $(\aleph_1,\phi)$-skeleton like, 
and by the inductive hypothesis in this case we have 
$\inv^m(\cC^0_N,\preceq_{\lnot\phi})=\inv^m(\cC^1_N,\preceq_{\lnot\phi})$ if both of
these invariants are defined. 

 By the inductive hypothesis we have  
$\inv^{m+1}(\cC_0)=\inv^{m+1}(\cC_1)$. 
\end{proof}


%
%

\subsection{Defining an invariant over a submodel} 
\label{S.Defining}

Assume $Z\prec A^n$. 
By $\tp_\phi(\vec a/X)$ we denote the $\phi$-type of $\vec a\in A^n$ in 
the signature $\{\phi\}$ over $Z$, or in symbols
\[
\tp_\phi(\vec a/Z)=
\{\phi(\vec x, \vec b)\colon \vec b\in Z, A\models \phi(\vec a, \vec b)\}
\cup 
\{\phi(\vec b, \vec x)\colon \vec b\in Z, A\models \phi(\vec b, \vec a)\}.
\]
If $B\subseteq A$ (in particular, if $B$ is an elementary submodel of $A$) we shall write $\tp_
\phi(\vec a/B)$
for $\tp_\phi(\vec a/B^n)$. 
 Write $\tp_\phi(\vec a/\vec e)$ for $\tp_\phi(\vec a/\{\vec e\})$. 
 
\begin{lemma} \label{L.2.6} A $\phi$-chain 
 $\cC$ in $A$  is  weakly $(\aleph_1,\phi)$-skeleton like in $A$
 if and only if for every   $\vec a\in A^n$
 there exists a countable $\cC_{\vec a}\subseteq \cC$ with the 
property that for $\vec c$ and $\vec d$  in $\cC$  the condition  
\[
\cC_{\vec a}\cap (-\infty, \vec c]_\phi=\cC_{\vec a}\cap (-\infty,\vec d]_\phi
\]
implies $\tp_\phi(\vec a/\vec c)=\tp_\phi(\vec a/\vec d)$. 
\end{lemma}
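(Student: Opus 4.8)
The plan is to reduce both directions to a single elementary observation about how a countable $X\subseteq\cC$ ``sees'' two points of the chain. The key point is that although $\preceq_\phi$ need not be transitive on $A^n$, it is a linear (hence transitive) order on the $\phi$-chain $\cC$; so for $\vec b\preceq_\phi\vec c$ in $\cC$ and $\vec e\in X\subseteq\cC$ we get $\vec e\preceq_\phi\vec b\Rightarrow\vec e\preceq_\phi\vec c$ and $\vec c\preceq_\phi\vec e\Rightarrow\vec b\preceq_\phi\vec e$, whence $X\cap(-\infty,\vec b]_\phi\subseteq X\cap(-\infty,\vec c]_\phi$ and $X\cap[\vec c,\infty)_\phi\subseteq X\cap[\vec b,\infty)_\phi$; moreover, for $\vec b\prec_\phi\vec c$, these two inclusions are simultaneously equalities if and only if $[\vec b,\vec c]_\phi\cap X=\emptyset$, because an element of $X$ lying in the closed interval $[\vec b,\vec c]_\phi$ is exactly what distinguishes the $\preceq_\phi$-cut $\vec b$ makes in $X$ from the cut $\vec c$ makes in $X$. (Here it is immaterial whether $[\vec b,\vec c]_\phi$ is read in $\cC$ or in $A^n$, since $X\subseteq\cC$; this is the only place where the non-transitivity of $\preceq_\phi$ on $A^n$ must be watched.) Accordingly I understand the right-hand condition as asserting that $\vec c$ and $\vec d$ determine the same $\preceq_\phi$-cut in $\cC_{\vec a}$, equivalently the same $\phi$-type over $\cC_{\vec a}$ with respect to the formulas $\phi(\vec x,\vec y)$ and $\phi(\vec y,\vec x)$; for $\vec c\prec_\phi\vec d$ this says precisely that $[\vec c,\vec d]_\phi\cap\cC_{\vec a}=\emptyset$.

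With this in hand both implications use the same countable $\cC_{\vec a}$. Suppose first the right-hand condition holds and fix $\vec a$ with witness $\cC_{\vec a}$. If $\vec b\preceq_\phi\vec c$ in $\cC$ and $[\vec b,\vec c]_\phi\cap\cC_{\vec a}=\emptyset$, then by the observation $\vec b$ and $\vec c$ make the same cut in $\cC_{\vec a}$, so $\tp_\phi(\vec a/\vec b)=\tp_\phi(\vec a/\vec c)$; unwinding the definition of $\tp_\phi$ yields $A\models\phi(\vec b,\vec a)\liff\phi(\vec c,\vec a)$ and $A\models\phi(\vec a,\vec b)\liff\phi(\vec a,\vec c)$, which is Definition~\ref{Def.wsl}. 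Conversely, suppose $\cC$ is weakly $(\aleph_1,\phi)$-skeleton like with witness $\cC_{\vec a}$ for $\vec a$, and let $\vec c,\vec d\in\cC$ make the same cut in $\cC_{\vec a}$. We may assume $\vec c\preceq_\phi\vec d$; if $\vec c=\vec d$ there is nothing to prove, and otherwise the observation gives $[\vec c,\vec d]_\phi\cap\cC_{\vec a}=\emptyset$, so Definition~\ref{Def.wsl} supplies $A\models\phi(\vec c,\vec a)\liff\phi(\vec d,\vec a)$ and $A\models\phi(\vec a,\vec c)\liff\phi(\vec a,\vec d)$, i.e.\ $\tp_\phi(\vec a/\vec c)=\tp_\phi(\vec a/\vec d)$.

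The step that needs real care is the bookkeeping behind the observation, in particular making sure that ``$\vec c$ and $\vec d$ determine the same cut in $\cC_{\vec a}$'' is genuinely equivalent to the vanishing of $[\vec c,\vec d]_\phi\cap\cC_{\vec a}$; this is transparent when one records both $(-\infty,\cdot]_\phi$ and $[\cdot,\infty)_\phi$, but if one insists on reading the cut off the initial segments $(-\infty,\cdot]_\phi$ alone, then an element of $\cC_{\vec a}$ sitting at an extreme point of a $\cC_{\vec a}$-gap can spoil the equivalence, and one should first enlarge $\cC_{\vec a}$ to $\cC_{\vec a}\cup\bigcup_{\vec e\in\cC_{\vec a}}J_{\vec e}$, where $J_{\vec e}\subseteq\cC$ is countable and cofinal below $\vec e$ (respectively coinitial above $\vec e$) whenever that one-sided cofinality is countable; for gaps of uncountable one-sided cofinality one falls back on Lemma~\ref{L.triv} (well-ordered and conversely well-ordered subchains are again weakly $(\aleph_1,\phi)$-skeleton like) together with the full form of Definition~\ref{Def.wsl}, not the weakening discussed in Remark~\ref{R.I.-1}, to conclude that $\tp_\phi(\vec a/\cdot)$ is constant across such a gap. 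I expect this enlargement, and the check that it stays countable and works uniformly in $\vec a$, to be the main obstacle; the two implications themselves are then immediate. The same argument applied to condition~(*) of Remark~\ref{R.I.0} would yield the corresponding reformulation for full $\phi$-types.
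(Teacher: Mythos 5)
Your central observation is correct and is the real content here: for $\vec c\prec_\phi\vec d$ in the chain, the displayed hypothesis $\cC_{\vec a}\cap(-\infty,\vec c]_\phi=\cC_{\vec a}\cap(-\infty,\vec d]_\phi$ says only that $\cC_{\vec a}\cap(\vec c,\vec d]_\phi=\emptyset$, a \emph{half-open} interval, whereas Definition~\ref{Def.wsl} is phrased with the closed interval $[\vec c,\vec d]_\phi$. These agree exactly when $\vec c\notin\cC_{\vec a}$. Your $\Leftarrow$ direction is therefore genuinely immediate (closed-interval disjointness is the stronger hypothesis), and that much matches what the paper has in mind; the paper's own proof is literally ``Immediate from Definition~\ref{Def.wsl}'' and never engages with the boundary case at all.

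The $\Rightarrow$ direction is where your argument has a gap, and your last step does not close it. Enlarging $\cC_{\vec a}$ by adjoining, for each $\vec e\in\cC_{\vec a}$, a countable coinitial subset of $\{\vec f\in\cC\colon\vec e\prec_\phi\vec f\}$ indeed handles the case where that coinitiality is countable: any $\vec d\succ_\phi\vec e$ then has an adjoined point in $(\vec e,\vec d]_\phi$, so the problematic configuration $\vec c=\vec e\in\cC_{\vec a}$ never occurs. But when the coinitiality is uncountable, appealing to Lemma~\ref{L.triv} and the ``full'' Definition~\ref{Def.wsl} does not yield that $\tp_\phi(\vec a/\cdot)$ is constant across the gap \emph{including its left endpoint $\vec e$}. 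Every pair whose closed interval contains $\vec e$ has $[\vec e,\vec c]_\phi\cap\cC_{\vec a}\ni\vec e$, so Definition~\ref{Def.wsl} imposes no relation at all between $\tp_\phi(\vec a/\vec e)$ and $\tp_\phi(\vec a/\vec f)$ for $\vec f\succ_\phi\vec e$. It is consistent with Definition~\ref{Def.wsl} (witnessed by $\cC_{\vec a}=\{\vec e\}$) that $\tp_\phi(\vec a/\cdot)$ is constant on $(-\infty,\vec e]_\phi\cap\cC$ and constant with a different value on $(\vec e,\infty)_\phi\cap\cC$; if that right tail is uncountably coinitial at $\vec e$, then no countable $\cC'_{\vec a}$ can separate $\vec e$ from every $\vec d\succ_\phi\vec e$ by the $(-\infty,\cdot]_\phi$-trace alone, and the right-hand condition of the lemma fails for this $\vec a$. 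So the claim ``$\tp_\phi(\vec a/\cdot)$ is constant across such a gap'' is not justified and is the one place your argument actually needs, but lacks, an idea. The equivalence you want does become immediate if the displayed hypothesis is replaced by the two-sided version you yourself suggest in your opening paragraph, namely that both $\cC_{\vec a}\cap(-\infty,\vec c]_\phi=\cC_{\vec a}\cap(-\infty,\vec d]_\phi$ and $\cC_{\vec a}\cap[\vec c,\infty)_\phi=\cC_{\vec a}\cap[\vec d,\infty)_\phi$ hold (equivalently $\cC_{\vec a}\cap[\vec c,\vec d]_\phi=\emptyset$ when $\vec c\preceq_\phi\vec d$); that is what Definition~\ref{Def.wsl} actually supports, and it is almost certainly what the authors intend by ``$\vec c$ and $\vec d$ determine the same cut.''
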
 

\begin{proof} Immediate from  Definition~\ref{Def.wsl}. 
\end{proof}

\begin{definition} \label{Def.Defines} 
Assume $B$ is an elementary submodel of  $A$,  $m\in \bbN$, and $\bd$ is an $m$-invariant. 
We say that $ \vec c\in A^n\setminus B^n$ \emph{defines} an $(A,B,\phi,m)$-invariant $\bd$
if there are 
\begin{enumerate}
\item\label{Def.Defines.1}  (nonempty) linear orders $J$ and $I$,  and 
\item  \label{Def.Defines.2}
$\vec a_j\in B^n$ for $j\in J$  and $\vec a_i\in A^n\setminus B^n$ for $i\in I$, 
\pushcounter
\end{enumerate}
such that 
\begin{enumerate}
\popcounter
\item \label{Def.Defines.3}$\langle \vec a_i\colon i\in J+I^*\rangle$ is 
a $\phi$-chain in $A$ that is weakly $(\aleph_1,\phi)$-skeleton like in $A$, 
\item \label{Def.Defines.4}$\tp_\phi(\vec a_i/B)=\tp_\phi(\vec c/B)$  for all $i\in I$,
\item \label{Def.Defines.5} $\bd=\inv^m(I)$, and
\item \label{Def.Defines.6} 
if $J', I'$,  $\vec a_i'$ for $i\in J'\cup I'$ and $\bd'$  
satisfy conditions \eqref{Def.Defines.1}--\eqref{Def.Defines.5} then 
$\inv^m(\bd')=\inv^m(\bd)$. 
\end{enumerate}
Let $\INV^{m}(A,B,\phi)$ denote the set of all $m$-invariants $\bd$ such that some~$\vec c$ 
defines an $(A,B,\phi,m)$-invariant $\bd$. 
\end{definition}

Conditions \eqref{Def.Defines.1}--\eqref{Def.Defines.5} of Definition~\ref{Def.Defines} 
imply \eqref{Def.Defines.6} of Definition~\ref{Def.Defines}.  
This is a consequence of  Lemma~\ref{L.tp} and the fact that cofinalities
occurring in   invariants that are defined in the sense of \S\ref{S.m} or \S\ref{S.m,lambda}  
are uncountable.

The following notation will be useful. 
Assume $\cC$ is a $\phi$-chain that is weakly $(\aleph_1,\phi)$-skeleton like in $A$
and $B$ is an elementary submodel of $A$. 
For $\vec c\in \cC\setminus B^n$ let 
\[
\cC[B,\vec c]=\{\vec a\in \cC\colon (\forall \vec b\in B^n\cap\cC) \vec c\preceq_\phi \vec b
\liff \vec a\preceq_\phi\vec b \}. 
\]
{\bf We shall always consider $\cC[B,\vec c]$ with respect to 
the reverse order,~$\preceq_{\lnot\phi}$.}


\begin{lemma} \label{L.tp-phi} 
Assume $\cC=\langle a_i\colon i\in I\rangle$ is a $\phi$-chain 
that is weakly $(\aleph_1,\phi)$-skeleton like in $A$. 
Assume $B$ is an elementary submodel of  $A$ and $\vec c\in \cC\setminus B^n$ are such that 
 \begin{enumerate}
 \item  $\cC_{\bar b}\cap \cC[B,\bar c]\cap (-\infty,\bar c]_\phi=\emptyset$ for all $\bar b\in B^n$, and
 \item   $\bd=\inv^m(\cC[B,\vec c],\preceq_{\lnot\phi})$ is well-defined. 
\pushcounter
\end{enumerate}
Then $\vec c$ defines the $(A,B,\phi,m)$-invariant $\bd$.   
\end{lemma}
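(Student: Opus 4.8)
The goal is to produce the data \eqref{Def.Defines.1}--\eqref{Def.Defines.5} of Definition~\ref{Def.Defines} witnessing that $\vec c$ defines the invariant $\bd=\inv^m(\cC[B,\vec c],\preceq_{\lnot\phi})$, and then to invoke the remark following Definition~\ref{Def.Defines} (itself a consequence of Lemma~\ref{L.tp} and the uncountability of cofinalities appearing in defined invariants) to get \eqref{Def.Defines.6} for free. The plan is to take $I$ to be the linear order indexing $\cC[B,\vec c]$ equipped with the reverse ordering $\preceq_{\lnot\phi}$, and to take $J$ to be an appropriate linear order indexing a cofinal-from-below piece of $\cC$ lying in $B^n$. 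Concretely, I would first set $I$ so that $\langle \vec a_i\colon i\in I\rangle$ enumerates $\cC[B,\vec c]$ in the order $\preceq_{\lnot\phi}$; by Lemma~\ref{L.wf} (the last sentence applied to intervals, together with the $\cC^*$ clause) the set $\cC[B,\vec c]$, being an interval of $\cC$ read in reverse, is itself weakly $(\aleph_1,\phi)$-skeleton like in $A$, and by hypothesis~(2) $\inv^m$ of it is defined and equals $\bd$, giving \eqref{Def.Defines.5}.

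\textbf{Building the $J$-part and checking the chain condition \eqref{Def.Defines.3}.} Next I would choose $\langle \vec a_j\colon j\in J\rangle$ to be a $\preceq_\phi$-increasing sequence from $B^n\cap\cC$ that is cofinal in $\{\vec b\in B^n\cap\cC\colon \vec b\prec_\phi \vec c\}$; such a sequence exists since $\cC$ is a $\phi$-chain and $B^n\cap\cC$ is linearly ordered by $\preceq_\phi$, and it is nonempty provided this set is nonempty (if it is empty one simply takes a single element of $B^n\cap\cC$, or adjusts $J$ to be a one-point order — this boundary case needs a line of care but is routine). Concatenating, I claim $\langle \vec a_i\colon i\in J+I^*\rangle$ is a $\phi$-chain: every element of the $J$-part is in $B^n\cap\cC$ and $\prec_\phi$-below $\vec c$, hence $\prec_\phi$-below every element of $\cC[B,\vec c]$ by the very definition of $\cC[B,\vec c]$ (an element $\vec a$ of $\cC[B,\vec c]$ agrees with $\vec c$ on the $\preceq_\phi$-relation to every $\vec b\in B^n\cap\cC$, so $\vec a_j\prec_\phi\vec c$ forces $\vec a_j\prec_\phi\vec a$). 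Here the reversal $I^*$ is exactly what makes the orientation match: $\cC[B,\vec c]$ is considered with $\preceq_{\lnot\phi}$, so $I$ is its reverse, so $I^*$ restores the $\preceq_\phi$-orientation and the concatenation is genuinely $\preceq_\phi$-increasing. That the whole chain is weakly $(\aleph_1,\phi)$-skeleton like in $A$ follows from Lemma~\ref{L.wf} once we observe it is (an initial segment of, or equal to) $\cC$ itself, or a subchain of $\cC$ of the form ``well-ordered part followed by reverse-well-ordered part'' to which Lemma~\ref{L.wf} applies — this is the point where I would lean on the hypotheses most carefully.

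\textbf{Type equality \eqref{Def.Defines.4} and where the real work is.} For \eqref{Def.Defines.4} I must show $\tp_\phi(\vec a_i/B)=\tp_\phi(\vec c/B)$ for all $i\in I$, i.e.\ for all $\vec a\in\cC[B,\vec c]$. This is where hypothesis~(1), that $\cC_{\bar b}\cap\cC[B,\vec c]\cap(-\infty,\vec c]_\phi=\emptyset$ for every $\bar b\in B^n$, is the crux. Fix $\bar b\in B^n$. By Lemma~\ref{L.2.6} it suffices to compare the traces $\cC_{\bar b}\cap(-\infty,\vec a]_\phi$ and $\cC_{\bar b}\cap(-\infty,\vec c]_\phi$; every $\vec a\in\cC[B,\vec c]$ has $\vec a\preceq_{\lnot\phi}$-position relative to $\vec c$ controlled so that the symmetric difference of these traces lies inside $\cC_{\bar b}\cap\cC[B,\vec c]\cap(-\infty,\vec c]_\phi$, which is empty by~(1) — hence the traces coincide and $\tp_\phi(\vec a/\bar b)=\tp_\phi(\vec c/\bar b)$. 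Running this over all $\bar b\in B^n$ gives $\tp_\phi(\vec a/B)=\tp_\phi(\vec c/B)$. I expect \emph{this} step — correctly extracting from hypothesis~(1) that the countable ``witness'' sets $\cC_{\bar b}$ never separate $\vec c$ from any point of $\cC[B,\vec c]$, uniformly in $\bar b$ — to be the main obstacle, because it requires being scrupulous about the non-transitivity of $\prec_\phi$ and about which side of $\vec c$ one is on (note $\cC[B,\vec c]$ may have elements on both $\preceq_\phi$-sides of $\vec c$, so one must handle $(-\infty,\vec c]_\phi$ and $[\vec c,\infty)_\phi$ separately, using the $\phi$ and $\psi$ clauses of weak skeleton-likeness respectively). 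Once \eqref{Def.Defines.1}--\eqref{Def.Defines.5} are in hand, \eqref{Def.Defines.6} is automatic by the paragraph after Definition~\ref{Def.Defines}, so $\vec c$ defines the $(A,B,\phi,m)$-invariant $\bd$, completing the proof.
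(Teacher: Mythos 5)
Your plan is the same as the paper's: take $J$ to be a well-ordered cofinal piece of $\{\bar b\in B^n\cap\cC\colon \bar b\prec_\phi\vec c\}$, take $I$ to be (a version of) $\cC[B,\vec c]$ with the reverse order $\preceq_{\lnot\phi}$, check clauses \eqref{Def.Defines.1}--\eqref{Def.Defines.5} of Definition~\ref{Def.Defines} via Lemma~\ref{L.wf} and hypothesis~(1), and obtain \eqref{Def.Defines.6} from Lemma~\ref{L.tp}. The one step you leave open --- handling elements of $\cC[B,\vec c]$ lying $\preceq_\phi$-above $\vec c$, where hypothesis~(1) indeed says nothing about $\cC_{\bar b}$ --- is not a genuine obstacle, and the remedy is not to ``handle both sides separately'' but to discard one side: take $I$ to be $\cC[B,\vec c]\cap(-\infty,\vec c]_\phi$ with the order $\preceq_{\lnot\phi}$ rather than all of $\cC[B,\vec c]$. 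This set is $\preceq_{\lnot\phi}$-upward closed in $\cC[B,\vec c]$, i.e.\ it is an end-segment of $(\cC[B,\vec c],\preceq_{\lnot\phi})$, so $\inv^m$ of it still equals $\bd$, giving \eqref{Def.Defines.5}. And for every $\bar d$ in this set and every $\bar b\in B^n$ hypothesis~(1) gives $[\bar d,\vec c]_\phi\cap\cC_{\bar b}=\emptyset$ outright, hence $\tp_\phi(\bar d/\bar b)=\tp_\phi(\vec c/\bar b)$ by Lemma~\ref{L.2.6}, which is exactly clause~\eqref{Def.Defines.4}. This is precisely the case the paper's proof checks. With this one adjustment your argument matches the paper's; the boundary worry about $J$ being empty does not arise in the intended applications, where $\cf(B^n\cap\cC\cap(-\infty,\vec c]_\phi)$ is uncountable.
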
 

\begin{proof} Let $J_0$ be a well-ordered 
$\preceq_\phi$-cofinal subset of 
\[
\{i\in I\colon \vec a_i\in B^n\text{ and } \vec a_i\preceq_\phi \vec c\}
\]
of minimal order type. 
By Lemma~\ref{L.wf} the $\phi$-chain $\langle a_i\colon i\in J_0\rangle$
is weakly $(\aleph_1,\phi)$-skeleton like in~$A$. Let $I_0= 
  \{i\in I\colon \vec a_i\in \cC[B,\vec c]\}$. 
    We need to check that  $I_0,J_0$ and $\langle\vec a_i\colon 
  i\in J_0+I_0^*\rangle$ satisfy  \eqref{Def.Defines.1}--\eqref{Def.Defines.6}
     of
  Definition~\ref{Def.Defines}. 

   Clauses \eqref{Def.Defines.1}--\eqref{Def.Defines.2} 
  are immediate.
  As an interval of a
  weakly $(\aleph_1,\phi)$-skeleton like order, $\langle a_i\colon i\in I_0\rangle$
  is weakly $(\aleph_1,\phi)$-skeleton like. 
  Therefore  clauses \eqref{Def.Defines.3} 
  follows. 
In order to prove~\eqref{Def.Defines.4} pick $\bar b\in B^n$ 
and $\bar d\in \cC[B,\bar c]\cap (-\infty,\bar c]_\phi$. Then 
$[\bar d,\bar c]_\phi\cap \cC_{\bar b}=\emptyset$, 
hence $\tp_\phi(\bar c/\bar b)=\tp_\phi(\bar d/\bar b)$. Sine $\bar b\in B^n$
was arbitrary, we have $\tp_\phi(\bar c/B)=\tp_\phi(\bar d/B)$ and we have proved 
\eqref{Def.Defines.4}. 
Clause~\eqref{Def.Defines.5} is automatic, 
  and \eqref{Def.Defines.6} follows by Lemma~\ref{L.tp} below.  
\end{proof}

\begin{lemma} \label{L.tp} Assume $I_0,I_1, J_0,J_1$ are linear orders and 
$\langle \vec a_i\colon i\in J_0+I_0^*\rangle$ and 
$\langle \vec b_i\colon i\in J_1+I_1^*\rangle$ are weakly $(\aleph_1,\phi)$-skeleton 
like $\phi$-chains in $A$ such that 
\begin{enumerate}
\popcounter
\item $\vec a_i\in B^n$ if and only if $i\in J_0$ and $\vec b_i\in B^n$ if and only if $i\in J_1$, 
\item $\tp_\phi(\vec a_i/B)=\tp_\phi(\vec b_j/B)$ for all $i\in I_0$ and all $j\in I_1$, 
\item each of $\cf(I_0)$, $\cf(I_1)$, $\cf(J_0)$, and $\cf(J_1)$ is uncountable.  
\end{enumerate}
If $\inv^m(I_0)$ and $\inv^m(I_1)$ are both defined then $\inv^m(I_0)=\inv^m(I_1)$. 
\end{lemma}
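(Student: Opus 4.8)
The plan is to prove this by induction on $m$, imitating the proof of Lemma~\ref{L1}, with the hypothesis that the two ``upper blocks'' $\langle\vec a_i:i\in I_0\rangle$ and $\langle\vec b_j:j\in I_1\rangle$ realise one and the same $\phi$-type $p$ over $B$ playing the role that mutual cofinality of the two chains plays there. By Lemma~\ref{L.wf}, each of these blocks, taken with the reverse order $\preceq_{\lnot\phi}$, is again weakly $(\aleph_1,\phi)$-skeleton like in $A$ and has order type $I_0$, respectively $I_1$, so its invariant $\inv^m$ equals $\inv^m(I_0)$, respectively $\inv^m(I_1)$; hence it is enough to compare the invariants of the two blocks, and the coinitiality of the $I_0$-block with respect to $\preceq_\phi$ is exactly $\cf(I_0)$ (and similarly for $I_1$).

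For the inductive step $m\mapsto m+1$, I would copy the inductive step of Lemma~\ref{L1}: fix a regular $\mu<\kappa:=\cf(I_0)$, pass to club-many elementary submodels $N\supseteq B$ of $A$ expanded by predicates for the two chains, of size $\mu$ and closed under the countable control sets, and form the pinched subchains $\cC_0[N,\cdot]$ and $\cC_1[N,\cdot]$, taken with $\preceq_{\lnot\phi}$. Arguing as in Lemma~\ref{L1}, for club-many such $N$ these two subchains together with their $N$-parts satisfy the hypotheses of the present lemma at level $m$ --- now over $N$ and with $\preceq_{\lnot\phi}$ in place of $\preceq_\phi$ --- and both have a defined $\inv^m$; the only new point is that all members of each pinched subchain realise a common $\phi$-type over $N$, which holds because $B\subseteq N$ (so $p$ is carried along) and the control sets for parameters in $N$ may be taken inside $N$. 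The inductive hypothesis then gives $\inv^m(\cC_0[N,\cdot])=\inv^m(\cC_1[N,\cdot])$ for club-many $N$, and since $\inv^{m+1}(I_i)$ is the $\calD(\kappa,\aleph_1)$-class of $N\mapsto\inv^m(\cC_i[N,\cdot])$, we obtain $\inv^{m+1}(I_0)=\inv^{m+1}(I_1)$ --- provided we already know $\cf(I_0)=\cf(I_1)$.

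That last equality, the base case $m=0$, is where I expect the real difficulty, because now there is no ambient chain containing both blocks and Lemma~\ref{L0} cannot be applied directly. One has to show that the coinitiality of the $I_0$-block inside $A$ depends only on the common type $p$ and on $B$ (and on $A$, $\phi$), so that it agrees with the corresponding quantity computed from the $I_1$-block: the countable control sets $\cC_{\vec a}$ for $\vec a\in B^n$ make the ``$B$-side'' of each block insensitive to countable modifications, and an argument in the spirit of Lemma~\ref{L0} --- run carefully so as never to use transitivity of $\prec_\phi$ --- then shows that the coinitiality of the $I_0$-block and the coinitiality of the $I_1$-block, i.e.\ $\cf(I_0)$ and $\cf(I_1)$, must coincide. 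With this in hand the inductive step above completes the proof.
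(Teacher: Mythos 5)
Your plan identifies the right ingredients --- the common $\phi$-type over $B$, the uncountable cofinalities, and weak skeleton-likeness --- but it misses the reduction that makes the lemma a one-step consequence of Lemma~\ref{L1}, and in its place it leaves two genuine gaps. The observation you are circling around is that the hypotheses already imply that $\langle\vec a_i : i\in I_0\rangle$ and $\langle\vec b_j : j\in I_1\rangle$, each taken with $\preceq_{\lnot\phi}$, are \emph{mutually cofinal}: fix $i(0)\in I_0$; since $\tp_\phi(\vec a_{i(0)}/B)=\tp_\phi(\vec b_j/B)$ for any $j\in I_1$, and each $\vec b_i$ with $i\in J_1$ lies in $B^n$ with $\vec b_i\preceq_\phi\vec b_j$, we get $\vec b_i\preceq_\phi\vec a_{i(0)}$ for all $i\in J_1$. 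Now the control set $\cC_{\vec a_{i(0)}}$ for the chain $\langle\vec b_i:i\in J_1+I_1^*\rangle$ is countable, while $\cf(J_1)$ and $\cf(I_1)$ are uncountable, so for $\preceq_\phi$-cofinally many $j^*\in J_1$ and $\preceq_{\lnot\phi}$-cofinally many $i\in I_1$ the interval $[\vec b_{j^*},\vec b_i]_\phi$ misses $\cC_{\vec a_{i(0)}}$; hence $\tp_\phi(\vec a_{i(0)}/\vec b_{j^*})=\tp_\phi(\vec a_{i(0)}/\vec b_i)$, which together with $\vec b_{j^*}\preceq_\phi\vec a_{i(0)}$ forces $\vec a_{i(0)}\prec_{\lnot\phi}\vec b_i$. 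The symmetric argument gives the other half. Both blocks are weakly $(\aleph_1,\phi)$-skeleton like by Lemma~\ref{L.wf}, so Lemma~\ref{L1} applied to them (with $\preceq_{\lnot\phi}$) yields $\inv^m(I_0)=\inv^m(I_1)$ directly; no fresh induction is needed.

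Against that benchmark, your proposal has two concrete gaps. First, the base case: you concede you cannot establish $\cf(I_0)=\cf(I_1)$ and only gesture at an ``argument in the spirit of Lemma~\ref{L0}''. That argument is exactly the mutual-cofinality computation above, and once you carry it out you should notice it already delivers the full conclusion via Lemma~\ref{L1}, making your planned induction superfluous. Second, the inductive step as written does not close: you claim the pinched blocks $\cC_0[N,\cdot]$ and $\cC_1[N,\cdot]$ realise a common $\phi$-type over $N$, but the hypothesis only gives a common type over $B\subseteq N$, and the observation that ``control sets for parameters in $N$ may be taken inside $N$'' controls indiscernibility within each chain separately --- it says nothing about agreement between the two chains over parameters in $N\setminus B$. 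What the proof of Lemma~\ref{L1} actually establishes and uses at that stage is mutual cofinality of the pinches, not a common type over $N$; so even if you insisted on running an induction, the step would have to pass through precisely the mutual-cofinality argument you have not supplied.
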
 

\begin{proof} Pick $i(0)\in I_0$. Since $\tp_\phi(\vec a_{i(0)}/B)=\tp_\phi(\vec b_j/B)$
for some (any) $j\in I_1$, we have that $\vec b_i\preceq_\phi \vec a_{i(0)}$ for all $i\in J_1$. 
Since $\cf(J_1)$ and $\cf(I_1)$ are both uncountable and 
since $\langle b_i\colon i\in J_1+I_1^*\rangle$ is weakly $(\aleph_1,\phi)$-skeleton like, we 
conclude that for large enough $i\in I_1$ we have $\vec a_{i(0)}\preceq_{\lnot\phi} \vec b_i$. 

The analogous argument shows that for every $i(1)\in I_1$ and all large enough 
$i\in I_0$ we have $\vec a_{i(1)}\preceq_{\lnot\phi} \vec b_i$. 
Then $\langle \vec a_i\colon i\in I_0\rangle$ and $\langle \vec b_i\colon i\in I_1\rangle$ are, 
when ordered by $\preceq_{\lnot\phi}$, mutually cofinal. 

By Lemma~\ref{L1} we have that $\inv^m(I_0)=\inv^m(I_1)$ if both of these invariants
are defined, and   
the claim follows. 
\end{proof}



\subsection{Representing invariants} \label{S.RI} 
In addition to  $A$, $\phi$ and $m$  fixed in \S\ref{S.OP} we 
distinguish $\lambda=|A|$. 
A \emph{representation} of $A$ is a continuous chain of elementary submodels $A_\xi$, for 
$\xi<\cf(\lambda)$, 
of $A$
such that $|A_\xi|<|A|$ for all $\xi$ 
and $\bigcup_{\xi<\cf(\lambda)}A_\xi=A$. 
  
Define a set $\INV^{m,\lambda}(A,\phi)$  of $m,\lambda$-invariants (see \S\ref{S.m,lambda})
by cases  as follows. 
Whenever~$\bd$ is an $m$-invariant, or an $m,\lambda$-invariant, for $m\geq 1$ we write 
$\langle \bd_\xi\colon \xi<\cf(\bd)\rangle$ for its representation. Although  this representation
is not unique, it is unique modulo the appropriate filter
$\calD(\cf(\lambda),\aleph_1)$ or $\calD_{h_\lambda}(\aleph_2)$.

\subsubsection{Assume $\lambda$ is regular} 
\label{S.m,lambda.1} 
Then $\bd$ is an \emph{$m,\lambda$-invariant}
of $A,\phi$ if $\bd$ is an  $m,\lambda$-invariant and for every 
representation $A_\xi$, $\xi<\lambda$ of $A$ we have 
\[
\{\xi\colon \bd_\xi\in \INV^{m}(A,A_\xi,\phi)\}\in \calD(\lambda,\aleph_1). 
\]

\subsubsection{Assume $\lambda$ is singular and $\cf(\lambda)>\aleph_1$.} 
\label{S.m,lambda.2} 
Then $\bd$ is an \emph{$m,\lambda$-invariant}
of $A,\phi$ if $\bd$ is an  $m,\lambda$-invariant and for every 
representation $A_\xi$, $\xi<\cf(\lambda)$ of $A$ we have 
\[
\{\xi\colon \bd_\xi\in \INV^{m}(A,A_\xi,\phi)\}\in \calD(\cf(\lambda),\aleph_1). 
\]

\subsubsection{Assume $\lambda$ is singular and $\aleph_1\geq\cf(\lambda)$}
\label{S.m,lambda.3} 
Fix $h\colon \aleph_2\to \cf(\lambda)$ as in Lemma~\ref{L.h}. Then  $\bd$ is an  
 \emph{$m,\lambda$-invariant} of $A,\phi$ if $\bd$ is an $m,\lambda$-invariant and for every 
representation $A=\bigcup_{\xi<\cf(\lambda)}A_\xi$ there is $\xi<\cf(\lambda)$ such that
\[
\{i<\aleph_2\colon \bd_i\in \INV^{m}(A,A_\xi,\phi)\text{ and } 
h(i)>|A_\xi|\}\in \calD_h(\aleph_2). 
\]

\begin{lemma} \label{L.J}
Assume $A,\phi,m$ and $\lambda=|A|$ are as above. 
Also assume $\cC=\langle \vec a_j\colon j\in J\rangle$ is a $\phi$-chain in $A$ that is weakly
$(\aleph_1,\phi)$-skeleton like in $A$. 
If $\inv^{m,\lambda}(J)$ is defined then $\inv^{m,\lambda}(J)\in \INV^{m,\lambda}(A)$. 
\end{lemma}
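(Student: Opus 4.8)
The goal is to show that whenever $J$ is a linear order of cardinality $\lambda$ that appears as the index set of a weakly $(\aleph_1,\phi)$-skeleton like $\phi$-chain in $A$, and $\inv^{m,\lambda}(J)$ is defined, then $\inv^{m,\lambda}(J)$ lies in $\INV^{m,\lambda}(A,\phi)$. By the definition of $\INV^{m,\lambda}(A,\phi)$ (split into the three cases of \S\ref{S.m,lambda.1}--\S\ref{S.m,lambda.3}), this amounts to taking an arbitrary representation $A=\bigcup_{\xi<\cf(\lambda)}A_\xi$ and finding, on a set of $\xi$'s that is large in the appropriate filter ($\calD(\lambda,\aleph_1)$, $\calD(\cf(\lambda),\aleph_1)$, or $\calD_{h_\lambda}(\aleph_2)$), a witness $\vec c\in A^n\setminus A_\xi^n$ that defines the $(A,A_\xi,\phi,m)$-invariant $\bd_\xi$, where $\langle\bd_\xi\rangle$ is the chosen representing sequence of $\inv^{m,\lambda}(J)=\inv^m(J)$.

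\textbf{Key steps.}
First I would fix a representation $\langle A_\xi\rangle$ of $A$ and, using that $\cC=\langle\vec a_j:j\in J\rangle$ is weakly $(\aleph_1,\phi)$-skeleton like, pass to the club of $\xi<\cf(\lambda)$ for which $\cC\cap A_\xi$ is an initial segment of $\cC$ (equivalently $A_\xi\prec(A,\cC)$ is such that $\cC\cap A_\xi^n$ is $\preceq_\phi$-closed downward in $\cC$) and for which $|\cC\cap A_\xi|=|A_\xi|$; a standard closing-off argument gives that this set is club, hence in the relevant filter. For such $\xi$, the order $J$ splits as $J_\xi^{\mathrm{init}} + J_\xi^{\mathrm{tail}}$ where $J_\xi^{\mathrm{init}}$ indexes $\cC\cap A_\xi^n$. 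Second, I would identify the canonical witness: pick $\vec c=\vec a_{j(\xi)}$ for a suitable $j(\xi)$ in the tail $J_\xi^{\mathrm{tail}}$ (near the bottom of the tail, just above $A_\xi$), and consider $\cC[A_\xi,\vec c]$ as defined just before Lemma~\ref{L.tp-phi}, with the reverse order $\preceq_{\lnot\phi}$. The whole point of the recursive definitions of $\inv^m$ and $\inv^{m,\lambda}$ is that for a club of $\xi$ (intersected with the filter-large set coming from the definition of $\inv^m(J)$ as an equivalence class of $\langle g_m(\eta)\rangle$ or of the coinitialities), one has $\inv^m(\cC[A_\xi,\vec c],\preceq_{\lnot\phi})=\bd_\xi$ — this is essentially the content of Example~\ref{Example}(3) combined with the way the $\inv^{m,\lambda}$-side conditions ($\cf(\bd_\xi)>\lambda_\xi$, resp.\ $>\lambda_{h(\xi)}$) were imposed precisely so the relevant cofinalities survive the cut at $A_\xi$. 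Third, I would verify the hypotheses of Lemma~\ref{L.tp-phi}: condition (1), that $\cC_{\vec b}\cap\cC[A_\xi,\vec c]\cap(-\infty,\vec c]_\phi=\emptyset$ for all $\vec b\in A_\xi^n$, follows for club-many $\xi$ because each $\cC_{\vec b}$ is countable and $A_\xi$ absorbs all these countable sets on a club (again a closing-off argument, using that the cofinalities in play are uncountable); and condition (2) is exactly $\inv^m(\cC[A_\xi,\vec c],\preceq_{\lnot\phi})=\bd_\xi$ being well-defined, arranged in the previous step. Then Lemma~\ref{L.tp-phi} hands us that $\vec c$ defines the $(A,A_\xi,\phi,m)$-invariant $\bd_\xi$, i.e.\ $\bd_\xi\in\INV^m(A,A_\xi,\phi)$, for all $\xi$ in a filter-large set, which is what each of the three cases of the definition of $\INV^{m,\lambda}(A,\phi)$ requires. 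In the singular case with $\cf(\lambda)\le\aleph_1$ one additionally needs $h(i)>|A_\xi|$ on a $\calD_h(\aleph_2)$-large set, which is built into the definition of $\calD_h(\aleph_2)$ via the sets $h^{-1}([\mu,\lambda))$ together with the fact that $|A_\xi|<\lambda$.

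\textbf{Main obstacle.}
The routine part is the club/closing-off bookkeeping (absorbing the countable sets $\cC_{\vec b}$, making $\cC\cap A_\xi$ an initial segment, matching cardinalities). The delicate part — and the one I would spend the most care on — is the bookkeeping of \emph{which} filter-large set of $\xi$'s we land in and the verification that the representing sequence $\langle\bd_\xi\rangle$ of $\inv^{m,\lambda}(J)$ actually agrees, for filter-many $\xi$, with $\langle\inv^m(\cC[A_\xi,\vec c],\preceq_{\lnot\phi})\rangle$. This requires unwinding the recursion in \S\ref{S.m} one level and using the inductive structure (it is natural to run the whole proof by induction on $m$, invoking Lemma~\ref{L1}/Lemma~\ref{L.tp} to see that the invariant computed from the cut does not depend on the choices of $\vec c$ and of the representation), together with a careful matching of the side conditions $\cf(\bd_\xi)>\lambda_\xi$ (resp.\ $>\lambda_{h(\xi)}$) in \S\ref{S.2.2.2}/\S\ref{S.m,lambda.3} against the requirement that the cofinality $\cf(\cC\cap A_\xi)$ reflect correctly — this is exactly why those side conditions were imposed, so the proof is really just checking that the definitions dovetail.
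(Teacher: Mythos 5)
Your plan tracks the paper's proof of Lemma~\ref{L.J} closely: fix a representation, close off so that $A_\xi$ absorbs all relevant countable sets $\cC_{\bar b}$, choose $\vec c$ in $\cC$ above $\cC\cap A_\xi^n$, observe that $(\cC[A_\xi,\vec c],\preceq_{\lnot\phi})$ realizes $\bd_\xi$, invoke Lemma~\ref{L.tp-phi}, and treat the three cofinality cases separately using the side conditions $\cf(\bd_\xi)>\lambda_\xi$ (resp.\ $>\lambda_{h(\xi)}$). One minor inaccuracy worth noting: you cannot in general pass to a club of $\xi$ on which $\cC\cap A_\xi^n$ is an initial segment of $\cC$ (the $\preceq_\phi$-downward closure of a point of $\cC$ may have cardinality $\lambda$, which $A_\xi$ cannot absorb), but this is not needed — it suffices that $\cC\cap A_\xi^n$ is bounded in $\cC$, which holds because $|A_\xi|<\lambda=\cf(J)$, so a $\vec c$ above it exists and the corresponding cuts $\{j\in J\colon j\le j'\text{ for some }j'\text{ with }\vec a_{j'}\in A_\xi^n\}$ form a legitimate continuous chain of proper initial segments of $J$ along which $\bd_\xi$ can be read off.
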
 

\begin{proof} This is really three lemmas wrapped up in one. We prove each of the 
three cases, depending on the cofinality of $\lambda$ (\S\ref{S.m,lambda.1},
\S\ref{S.m,lambda.2} and 
\S\ref{S.m,lambda.3}) 
separately. 

\subsubsection{Assume $\lambda$ is regular} 
 Fix a representation $A_\xi$, $\xi<\lambda$, of $A$. 
Let $\bfC\subseteq \lambda$ be the club consisting of all $\xi$ 
such that for every $\bar a\in A_\xi^n$ we have $\cC_{\bar a}\subseteq A_\xi^n$. 
By the assumption $\cf(J)=\lambda$ and
we may clearly assume $m\geq 1$. 
Let 
\[
\bd=\langle \bd_\xi\colon \xi<\lambda\rangle/\calD(\lambda,\aleph_1).
\] 
Fix $\xi\in \bfC$ such that $\cf(\xi)=\cf(\cC\cap A_\xi^n)\geq \aleph_1$
and $\bd_\xi$ is defined. Since $\cf(J)=\lambda$ by 
\S\ref{S.m,lambda.1} 
the set of such $\xi$ belongs to $\calD(\lambda,\aleph_1)$. 
It will therefore suffice to show that for every such $\xi$ some $\vec c$ defines
 defines the $(A,A_\xi,\phi,m)$-invariant $\bd_\xi$.  

Pick $\vec c\in\cC$ such that $(-\infty,\vec c]_\phi\cap A_\xi^n\supseteq
\cC\cap A_\xi^n$. 
Let $I^\xi$ be the order with the underlying set 
$\{i\in J\colon \vec a_i\in \cC[A_\xi,\vec c]\}$, so that $\inv^m(I^\xi)=\bd_\xi$. 
 Then 
 \[
 \cf(\cC\cap A_\xi^n)=\cf(\xi)\geq\aleph_1
 \]
 and 
 \[
 \cf(\cC[A_\xi,\vec c],\preceq_{\lnot\phi})=\cf(\bd_\xi)\geq\aleph_1. 
 \]
Since $\bar a\in A_\xi^n$ implies $\cC_{\bar a}\subseteq A_\xi^n$, 
 Lemma~\ref{L.tp-phi} implies that 
$\vec c$ defines the $(A,A_\xi,\phi,m)$-invariant $\bd_\xi$.  

\subsubsection{Assume $\lambda$ is singular and $\aleph_1<\cf(\lambda)$}
Fix a representation $A_\xi$, $\xi<\cf(\lambda)$, of $A$. 
By the assumption $\cf(J)=\cf(\lambda)$ and
we may clearly assume $m\geq 1$. 
Pick $\xi(0)<\cf(\lambda)$ such that $A_{\xi(0)}\cap \cC$ is cofinal in $\cC$. 

Let $\bd=\langle \bd_\xi\colon \xi<\cf(\lambda)\rangle/\calD(\cf(\lambda),\aleph_1)$. 
Hence $J=\sum_{\xi<\cf(\lambda)} J_\xi^*$ with $\inv^{m-1}(J_\xi)=\bd_\xi$
for $\calD(\cf(\lambda),\aleph_1)$-many $\xi$. 
By \S\ref{S.m,lambda.2} we also have
 $\cf(\bd_\xi)=\cf(J_\xi)>|A_\xi|$ for $\calD(\cf(\lambda),\aleph_1)$ many $\xi$. 
 It will therefore suffice to show that for every such $\xi$ some $\vec c$ defines
  the $(A,A_\xi,\phi,m)$-invariant $\bd_\xi$.  

Since $\cf(J_\xi)>|A_\xi|$, 
for such $\xi$ we can pick $j(0)\in J_\xi$ 
such that 
\[
\{ \vec a_i\colon i\in J_\xi, i>j(0)\}
\cap (A_\xi^n\cup \bigcup\{\cC_{\bar a}\colon \bar a\in A_\xi^n\})=\emptyset.
\] 
Let $\vec c=\vec a_{j(0)}$. 
Then 
\[
\cf(A_\xi^n\cap \cC\cap (-\infty,\vec c]_\phi,\preceq_\phi)=\cf(\xi)\geq\aleph_1
\]
and
\[
\cf(\cC[A_\xi,\vec c])=\cf(\bd_\xi)\geq\aleph_1.
\]
By Lemma~\ref{L.tp-phi} we have that $\vec c$ defines
the $(A,A_{\xi(0)},\phi,m)$-invariant $\bd_\eta$.

\subsubsection{Assume $\lambda$ is singular and $\cf(\lambda)\leq\aleph_1$}
 Fix a representation $A_\xi$, $\xi<\cf(\lambda)$, of $A$. 
By the assumption $\cf(J)=\aleph_2$ and
we may clearly assume $m\geq 1$. 
Let $\bd=\langle \bd_\xi\colon \xi<\aleph_2\rangle/\calD_{h_\lambda}(\aleph_2)$
and write $J=\sum_{\zeta<\aleph_2} J_\zeta^*$ so 
that $\inv^{m-1}(J_\zeta)=\inv^{m-1}(\bd_\zeta)$ for $\calD(\aleph_1,h_\lambda)$-many $\zeta$.

Fix $\xi(0)<\cf(\lambda)$ such that $A_{\xi(0)}\cap \cC$ is cofinal in $\cC$. 
The set of $\eta<\aleph_2$ such that $h(\eta)>\xi(0)$ and $\cf(\bd_\eta)>|A_{\xi(0)}|$ 
belongs to $\calD_h(\aleph_2)$, and it will suffice to show that 
for such $\eta$ some $\vec c$ 
 defines the $(A,A_{\xi(0)},\phi,m)$-invariant $\bd_\eta$.  

Since $\cf(\bd_\eta)=\cf(J_\eta)>|A_{\xi(0)}|$, we can pick $j(0)\in J_\eta$
such that 
\[
\{ \vec a_i\colon i\in J_\eta, i>j(0)\}
\cap  (A_{\xi(0)}^n\cup\bigcup\{\cC_{\bar a}\colon \bar a\in A_\xi^n\})=\emptyset. 
\] 
Let $\vec c=\vec a_{j(0)}$. Then 
\[
\cf(A_{\xi(0)}^n\cap \cC\cap (-\infty,\vec c]_\phi,\preceq_\phi)=\cf(\eta)\geq\aleph_1
\]
and 
\[
\cf(\cC[A_{\xi(0)}, \vec c],\preceq_{\lnot\phi})=\cf(\bd_\eta)\geq \aleph_1. 
\]
By Lemma~\ref{L.tp-phi} we have that $\vec c$ defines
the $(A,A_{\xi(0)},\phi,m)$-invariant $\bd_\eta$. 

This exhausts the cases and concludes the proof of Lemma. 
\end{proof} 

\subsection{Counting the number of invariants of a model} 
\label{S.counting} 
We would like to prove  the inequality $|\INV^{m,\lambda}(A,\phi)|\leq |A|$ 
for every model $A$ of cardinality $\geq\aleph_2$. 
Instead we prove a sufficiently strong
  approximation 
 to this inequaity. As a courtesy to the reader we start by 
 isolating the following triviality. 

\begin{lemma}\label{L.trivial} For every cardinal $\lambda$ and every 
$\cX\subseteq \cP(\lambda)$
of cardinality $>\lambda$ there is $\xi<\lambda$ such that $|\{x\in \cX\colon \xi\in x\}|>\lambda$. 
\end{lemma} 

\begin{proof} We may assume $|\cX|=\lambda^+$ and enumerate $\cX$ as 
$\{x_\eta\colon \eta<\lambda^+\}$. If the conclusion of lemma fails then 
$f(\xi)=\sup\{\eta<\lambda^+\colon \xi\in x_\eta\}$ defines a cofinal function 
from $\lambda$ to $\lambda^+$. 
\end{proof}

See the paragraph before Lemma~\ref{L.many-invariants}  for the definition of 
disjoint representing sequences. 

\begin{lemma} \label{L.disj.1} For $A,\phi,m$ as usual and $\lambda=|A|$ 
every  set of disjoint representing sequences of invariants in 
$\INV^{m,\lambda}(A,\phi)$ has size at most $\lambda$. 
\end{lemma}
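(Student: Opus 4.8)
The plan is to derive this as a counting argument built on top of Lemma~\ref{L.J} and Lemma~\ref{L.trivial}, using the key structural fact that every invariant in $\INV^{m,\lambda}(A,\phi)$ is witnessed by an element $\vec c\in A^n$ defining it (via a $\phi$-chain inside $A$ that is weakly $(\aleph_1,\phi)$-skeleton like), together with a submodel from a representation. The point is that there are only $|A|=\lambda$ many candidates for such a witness $\vec c$, and the disjointness of the representing sequences must then force a collision unless the set of sequences has size $\leq\lambda$.

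First I would fix a representation $A_\xi$, $\xi<\cf(\lambda)$, of $A$ (for $\lambda$ regular this is a continuous chain of length $\lambda$; for singular $\lambda$ it has length $\cf(\lambda)$, but the invariants are indexed by $\cf(\lambda)$ or by $\aleph_2$ according to the three cases of \S\ref{S.m,lambda}). Suppose toward a contradiction that there is a family $\{\bd^{(\eta)}\colon\eta<\lambda^+\}$ of invariants in $\INV^{m,\lambda}(A,\phi)$ whose chosen representing sequences $\langle\bd^{(\eta)}_i\colon i<\kappa\rangle$ are pairwise disjoint, where $\kappa$ is the common cofinality. By the defining clauses \S\ref{S.m,lambda.1}–\S\ref{S.m,lambda.3}, for each $\eta$ there is a coordinate $i=i(\eta)<\kappa$ (in fact a $\calD$-positive set of them, but one suffices) such that $\bd^{(\eta)}_{i(\eta)}\in\INV^m(A,A_{\xi},\phi)$ for an appropriate $\xi=\xi(\eta)$ from the representation; by Definition~\ref{Def.Defines} this component invariant is defined by some $\vec c(\eta)\in A^n\setminus A_\xi^n$. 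Applying Lemma~\ref{L.trivial} with $\cX=\{i(\eta)\colon\eta<\lambda^+\}$ — more precisely, since there are $\lambda^+$ many $\eta$'s and only $\kappa\leq\lambda$ coordinates (as $|\bd|\le\lambda$ for any invariant of a linear order of cardinality $\lambda$), and only $\cf(\lambda)\le\lambda$ many choices of $\xi$ — there is a single coordinate $i^*<\kappa$ and a single $\xi^*<\cf(\lambda)$ such that the set $W=\{\eta<\lambda^+\colon i(\eta)=i^*,\ \xi(\eta)=\xi^*\}$ has cardinality $\lambda^+$.

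Now the collision: for each $\eta\in W$, the component $\bd^{(\eta)}_{i^*}$ is an $(A,A_{\xi^*},\phi,m)$-invariant, defined by $\vec c(\eta)\in A^n$. Since $|A^n|=\lambda<\lambda^+=|W|$, there are $\eta\neq\eta'$ in $W$ with $\vec c(\eta)=\vec c(\eta')=:\vec c$. But clause \eqref{Def.Defines.6} of Definition~\ref{Def.Defines} says that the invariant a fixed $\vec c$ defines over a fixed pair $(A,A_{\xi^*})$ is unique — it does not depend on the choice of the auxiliary data $J,I,\langle\vec a_i\rangle$. (This is exactly the content of Lemma~\ref{L.tp}, invoked via Lemma~\ref{L.tp-phi}: mutually cofinal weakly skeleton-like chains realizing the same $\phi$-type over $A_{\xi^*}$ have the same $\inv^m$.) Hence $\bd^{(\eta)}_{i^*}=\bd^{(\eta')}_{i^*}$, contradicting disjointness of the two representing sequences at coordinate $i^*$. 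Therefore no such family of size $\lambda^+$ exists, and every set of disjoint representing sequences of invariants in $\INV^{m,\lambda}(A,\phi)$ has size at most $\lambda$.

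The main obstacle I expect is bookkeeping across the three cases of \S\ref{S.m,lambda}: when $\lambda$ is singular with $\cf(\lambda)\le\aleph_1$ the representing sequences are indexed by $\aleph_2$ rather than by $\cf(\lambda)$, and the $\calD_h(\aleph_2)$-positivity condition ties the relevant coordinate $i$ to the requirement $h(i)>|A_{\xi}|$; one must check that the pigeonhole on the pair $(i^*,\xi^*)$ still goes through, which it does because $\aleph_2\le\lambda$ and $\cf(\lambda)\le\lambda$. A secondary point requiring care is confirming that whenever $\bd^{(\eta)}_{i^*}\in\INV^m(A,A_{\xi^*},\phi)$, the witnessing $\vec c(\eta)$ really lies in $A^n$ and not in some larger ambient model — but this is built into Definition~\ref{Def.Defines}, where $\vec c\in A^n\setminus B^n$. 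Everything else is the soft counting sketched above.
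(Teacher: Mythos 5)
Your proposal is correct and matches the paper's own proof in substance: both fix a representation, use the $\calD$-largeness condition from \S\ref{S.m,lambda} to find, for each $\eta<\lambda^+$, a coordinate where the component invariant lies in $\INV^m(A,A_\xi,\phi)$, and then derive a contradiction from the fact that at most $\lambda$ tuples $\vec c\in A^n$ can each define at most one $(A,A_\xi,\phi,m)$-invariant (clause \eqref{Def.Defines.6}). The only cosmetic difference is that you pick one coordinate per $\eta$ and then apply an ordinary pigeonhole twice, whereas the paper invokes Lemma~\ref{L.trivial} directly on the family of $\calD$-large sets $S_\eta$; the counting is the same.
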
 

\begin{proof} Let us prove the case when $\lambda$ is regular. 
We may assume $m\geq 1$ since the case $m=0$ is trivial. 
Assume the contrary and let $\bd(\eta)$, for $\eta<\lambda^+$, 
be disjoint representing sequences of elements of $\INV^{m,\lambda}(A,\phi)$. 
Let $\bd(\eta)=\langle \bd(\eta)_\xi\colon \xi<\lambda\rangle/\calD(\lambda,\aleph_1)$. 
Fix a representation $A_\xi$, for $\xi<\lambda$, of $A$.

For each $\eta<\lambda^+$ fix 
$S_\eta\in \calD(\lambda,\aleph_1)$ such that for every $\xi\in S_\eta$ some
 $\vec c_\xi$ defines an $(A,A_\xi,\phi,m)$-invariant $\bd(\eta)_\xi$. 
By Lemma~\ref{L.trivial} there is $\xi<\lambda$ such that
 $\lambda^+$ distinct $(A,A_\xi,\phi,m)$-invariants are defined by elements of $A^n$. 
 Since $|A|=\lambda$, this is impossible. 

The proofs of the two cases when $\lambda$ is singular  are   almost identical to the above
proof  and are therefore omitted. 
 \end{proof} 
 
\begin{prop} \label{P.counting} 
Assume $\lambda\geq \aleph_2$ and 
 $\bbK$ is a class of models of cardinality~$\lambda$.  
If there are $n$ and a $2n$-ary formula $\phi$   such that for every linear 
order~$I$ of cardinality $\lambda$ there exists a model $A\in \bbK$ 
 such that $I$ is isomorphic to a weakly $(\aleph_1,\phi)$-skeleton like
 $\phi$-chain in $A^n$, then there are $2^\lambda$ nonisomorphic models in $\bbK$. 
 \end{prop}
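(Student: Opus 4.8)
The plan is to assemble the three preparatory results of \S\ref{S.Representing} into a counting argument. First I would apply Lemma~\ref{L.many-invariants} to fix an $m\in\bbN$ together with linear orders $I_\eta$, for $\eta<2^\lambda$, each of cardinality $\lambda$, such that every $\inv^{m,\lambda}(I_\eta)$ is defined and the chosen representing sequences $\bd(\eta)$ are pairwise disjoint; in particular $\eta\mapsto\bd(\eta)$ is injective. For each $\eta$, the hypothesis on $\bbK$ produces a model $A_\eta\in\bbK$ and a weakly $(\aleph_1,\phi)$-skeleton like $\phi$-chain $\cC_\eta\subseteq A_\eta^n$ that is isomorphic, as a linear order, to $I_\eta$. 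Since $\cC_\eta$ is an infinite $\phi$-chain, $A_\eta^n$ contains arbitrarily long finite $\prec_\phi$-chains, so $\phi$ witnesses the order property of the theory of $A_\eta$ and the machinery of \S\ref{S.Invariants} and \S\ref{S.Representing} applies to $A_\eta,\phi,n,m$ and $\lambda=|A_\eta|$. Because $\inv^{m,\lambda}$ is invariant under isomorphism of linear orders, $\inv^{m,\lambda}(\cC_\eta)=\inv^{m,\lambda}(I_\eta)$ is defined, and Lemma~\ref{L.J} applied to $\cC_\eta$ yields $\inv^{m,\lambda}(I_\eta)\in\INV^{m,\lambda}(A_\eta,\phi)$.

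Next I would observe that $\INV^{m,\lambda}(A,\phi)$ depends only on the isomorphism type of $A$: an isomorphism $\pi\colon A\to A'$ sends any representation of $A$ to a representation of $A'$ and, preserving $\phi$, also preserves $\phi$-chains, the property of being weakly $(\aleph_1,\phi)$-skeleton like, and $\phi$-types over elementary submodels, so it carries a witness that $\vec c$ defines an $(A,A_\xi,\phi,m)$-invariant $\bd$ to a witness that $\pi(\vec c)$ defines the $(A',\pi[A_\xi],\phi,m)$-invariant $\bd$; since the definitions in \S\ref{S.RI} quantify over all representations and the auxiliary function $h_\lambda$ (used when $\cf(\lambda)\leq\aleph_1$) depends only on $\lambda$, we get $\INV^{m,\lambda}(A,\phi)=\INV^{m,\lambda}(A',\phi)$. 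Then a fiber argument closes the proof: consider the map $\eta\mapsto[A_\eta]$ from $2^\lambda$ to the class of isomorphism types represented in $\bbK$. For a fixed isomorphism type $[A]$ and any $\eta$ in its fiber we have $\inv^{m,\lambda}(I_\eta)\in\INV^{m,\lambda}(A_\eta,\phi)=\INV^{m,\lambda}(A,\phi)$, and $\bd(\eta)$ is a representing sequence of this invariant; the $\bd(\eta)$ over the fiber are pairwise disjoint, so Lemma~\ref{L.disj.1} bounds the fiber by $\lambda$. Hence, writing $\mu$ for the number of isomorphism types occurring in $\bbK$, the set $2^\lambda$ is covered by $\mu$ sets each of size $\leq\lambda$, so $2^\lambda\leq\mu\cdot\lambda$; as $\lambda<2^\lambda$ this forces $\mu\geq 2^\lambda$, which is the desired conclusion.

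I do not expect a serious obstacle, since the real content is already carried by Lemmas~\ref{L.many-invariants}, \ref{L.J} and~\ref{L.disj.1}; the proposition is an assembly. The two points requiring a line of care are the verification that each $A_\eta$ satisfies the standing hypotheses of \S\ref{S.Representing} (immediate, because a model containing an infinite $\phi$-chain has arbitrarily long finite $\prec_\phi$-chains, so $\phi$ witnesses the order property of its theory) and the isomorphism-invariance of $A\mapsto\INV^{m,\lambda}(A,\phi)$, which is routine but is the one spot where the definitions of \S\ref{S.RI} must be unwound. One must also not forget to transport $\inv^{m,\lambda}$ from $I_\eta$ to the chain $\cC_\eta$ actually living inside $A_\eta^n$ via the order isomorphism before invoking Lemma~\ref{L.J}.
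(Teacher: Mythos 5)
Your proof is correct and follows essentially the same route as the paper's: assemble Lemma~\ref{L.many-invariants}, Lemma~\ref{L.J}, and Lemma~\ref{L.disj.1} into a counting argument. The paper's own proof is terser (it compresses your fiber argument into the phrase ``by the pigeonhole principle'' and leaves the isomorphism-invariance of $\INV^{m,\lambda}(A,\phi)$ and the transport along the order isomorphism $I_\eta\cong\cC_\eta$ implicit), but the decomposition, the key lemmas, and the logic are identical.
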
 
  
  \begin{proof} Let $I$ be a linear order and let $A$ be a model such that 
  $I$ is isomorphic to a weakly $(\aleph_1,\phi)$-skeleton like $\phi$-chain in $A$. 
  By Lemma~\ref{L.J}, $\inv^{m,\lambda}(I)\in \INV^{m,\lambda}(A)$ and 
by Lemma~\ref{L.disj.1}, 
$\INV^m(A)$ has cardinality at most $\lambda$ for every 
$A\in\bbK$. By the pigeonhole principle there are $2^{\lambda}$ 
nonisomorphic ultraproducts  
elements of $\bbK$. 
\end{proof} 
  


\section{Construction of ultrafilters} 
\label{S.Construction} 


The main result of this section  is Proposition~\ref{P.construction} below. 
 Its version in  which 
$M_i=(\bbN,<)$ for all $i$ was proved in  \cite[Lemma~4.7]{KShTS:818}
and some of the ideas are taken from this proof. 
Recall that if $D$ is a filter on $\lambda$ then  $D^+$ is the coideal of all sets positive 
with respect to $D$, or in symbols
\[
D^+=\{X\subseteq \lambda\colon X\cap Y\neq \emptyset\text{ for all }Y\in D\}.
\]
If $D$ is a filter on $\lambda$ and $\cG\subseteq \bbN^\lambda$ then we say 
$\cG$ is \emph{independent mod $D$} if
for all $k\in\bbN$, all distinct  $g_0, \dots, g_{k-1}$ in $\cG$ and all 
$j_0,\dots, j_{k-1}$ in $\bbN$ the set
\[
\{\xi<\lambda\colon g_0(\xi)=j_0,\dots g_{k-1}(\xi)=j_{k-1}\}
\]
belongs to $D^+$. Note that it is not required that $j_i$ be distinct. 

Write $\FI(\cG)$ for the family of all finite partial functions $h$ from $\cG$ into $\bbN$. 
For $h\in \FI(\cG)$ write 
\[
A_h=\{n\in \bbN\colon \ff(n)=h(\ff)\text{ for all }\ff\in \dom(h)\}. 
\]
Let 
\[
\FI_s(\cG)=\{A_h\colon h\in \FI(\cG)\}. 
\]
Lemma~\ref{L.Cohen} below   a special case of \cite[Claim~VI.3.17(5)]{Sh:c}. 
We include   its proof for  convenience of the reader. 
We shall write $X\subseteq^D Y$ for $X\setminus Y=\emptyset$ mod $D$ and
$X=^D Y$ for $X\Delta Y=\emptyset$ mod $D$. 

\begin{lemma} \label{L.Cohen} 
Assume $D$ is a filter on $\lambda$ and $\cG\subseteq \bbN^\lambda$ is a family of functions 
independent mod $D$. Furthermore, assume $D$ is a maximal filter 
such that~$\cG$ is independent mod $D$. 
Then for every $X\subseteq \lambda$ there is a countable subset $\cA\subseteq \FI(\cG)$
such that 
\begin{enumerate}
\item For every $h\in \cA$ either $A_h\subseteq^D X$  or
$A_h\cap X=^D\emptyset$. 
\item For every $h'\in \FI(\cG)$ there is $h\in \cA$ such that $A_{h'}\cap A_{h}\neq^D \emptyset$.  
\end{enumerate}
\end{lemma}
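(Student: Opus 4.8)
The plan is to build $\cA$ by iterating the maximality of $D$, one $\FI(\cG)$-condition at a time, and to control countability by a careful bookkeeping over the countably many $h'\in\FI(\cG)$ that are actually relevant. First I would observe that the key consequence of maximality is the following ``dichotomy'' statement: if $Y\in \FI_s(\cG)$ — that is, $Y=A_{h'}$ for some $h'\in\FI(\cG)$ — then for every $X\subseteq\lambda$ there is some $h\in\FI(\cG)$ with $A_h\subseteq^D Y$ and either $A_h\subseteq^D X$ or $A_h\cap X=^D\emptyset$. Indeed, if this failed then for every finite extension $h\supseteq h'$ we would have $A_h\cap X\neq^D\emptyset$ and $A_h\setminus X\neq^D\emptyset$; a short computation then shows that the filter generated by $D$ together with $\{Y\cap X\}$ still keeps $\cG$ independent (independence of $\cG$ mod a filter $E$ only needs each set $A_h$ to be $E$-positive, and the $A_h$ refining $Y$ all meet $X$ positively), contradicting maximality of $D$. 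I expect \emph{this} maximality-extraction step to be the main obstacle: one must verify precisely which filter to adjoin $X$ (or its complement) to so as to preserve independence of $\cG$, and this is exactly the heart of \cite[Claim~VI.3.17(5)]{Sh:c}.

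Granting the dichotomy, I would construct $\cA$ as follows. Enumerate $\FI(\cG)$ — it is countable because $\cG$, as an independent family witnessing something, is itself countable (or: we only ever care about the countably many conditions appearing in the recursion, so restrict attention to a countable subfamily of $\cG$ and work inside it) — as $\{h'_k : k\in\bbN\}$. Recursively, having chosen finitely many elements of $\cA$, for the condition $h'_k$ apply the dichotomy with $Y=A_{h'_k}$ and the given set $X$ to obtain $h_k\in\FI(\cG)$ with $A_{h_k}\subseteq^D A_{h'_k}$ and either $A_{h_k}\subseteq^D X$ or $A_{h_k}\cap X=^D\emptyset$. Put $h_k$ into $\cA$. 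After $\omega$ steps, $\cA=\{h_k:k\in\bbN\}$ is countable, clause~(1) holds by construction for every member of $\cA$, and clause~(2) holds because for any $h'\in\FI(\cG)$, say $h'=h'_k$, we have $A_{h_k}\subseteq^D A_{h'_k}=A_{h'}$, and $A_{h_k}\neq^D\emptyset$ since it is $D$-positive (being some $A_{h_k}$ with $\cG$ independent mod $D$), hence $A_{h'}\cap A_{h_k}=^D A_{h_k}\neq^D\emptyset$.

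Two routine points I would spell out in the writeup but not belabor here: first, that every $A_h$ for $h\in\FI(\cG)$ is $D$-positive — this is literally the definition of $\cG$ being independent mod $D$ applied to $\dom(h)$ and the values $h$ prescribes; second, that the filter ``generated by $D$ and a positive set'' used in the maximality argument is still a proper filter whose positive sets include all the relevant $A_h$'s, which is what lets us invoke maximality of $D$ for a contradiction. The only genuinely delicate combinatorics is the dichotomy extraction, and there the right move is: if no finite extension of $h'$ decides $X$ mod $D$, then $\{A_h\cap X : h\supseteq h'\}$ together with $D$ generates a filter under which every basic $A_g$ (including those not refining $h'$, which are untouched) remains positive, contradicting maximality.
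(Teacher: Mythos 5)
There is a genuine gap, and it sits exactly at the step you flag as the hinge of the argument. You justify taking a countable enumeration of $\FI(\cG)$ by claiming that $\cG$ is countable; that is false. In the paper's only use of this lemma (Proposition~\ref{P.construction}), $\cG$ is an independent family of cardinality $\fc$, so $\FI(\cG)$ has cardinality $\fc$ as well. Your recursion, run over all of $\FI(\cG)$, produces an $\cA$ of size $\fc$, and the countability of $\cA$ is precisely the nontrivial content of the lemma (it is what underwrites the notion, introduced right after the lemma, of $X$ being \emph{supported} by a countable $\cG_0\subseteq\cG$). Your parenthetical remedy --- ``restrict attention to a countable subfamily of $\cG$ and work inside it'' --- is not an argument as it stands: clause~(2) quantifies over \emph{all} $h'\in\FI(\cG)$, so you would have to prove that satisfying (2) over some countable $\FI(\cG_0)$ forces it over all of $\FI(\cG)$ (this is in fact true, by merging $h'\rs\cG_0$ with a witness from $\cA$ and then invoking independence for the remaining coordinates of $h'$, but you have to say so), and you would also need a closure argument to arrange that the domains of the $h_k$ you extract actually land back inside $\cG_0$. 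The paper obtains countability by an entirely different device: it takes the family $\cA_0$ of all $h$ whose $A_h$ already decides $X$ mod $D$, extracts a \emph{maximal $D$-almost-disjoint} subfamily $\cA\subseteq\cA_0$, and invokes the $\Delta$-system lemma, which bounds any $D$-almost-disjoint family in $\FI(\cG)$ by $\aleph_0$ regardless of $|\cG|$.

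A secondary flaw lies in the ``short computation'' for your dichotomy. Adjoining $Y\cap X$ to $D$ does \emph{not} keep $\cG$ independent: if $g\in\FI(\cG)$ is incompatible with $h'$ then $A_g\cap Y\cap X=\emptyset$, so $A_g$ becomes null in the extended filter --- such $g$ are not ``untouched'' as you assert. The dichotomy itself is correct, but the set to adjoin is $(\lambda\setminus Y)\cup X$ (the implication ``$Y\Rightarrow X$''): when $g$ is compatible with $h'$ one has $A_g\cap\bigl((\lambda\setminus Y)\cup X\bigr)\supseteq A_{g\cup h'}\cap X$, which is $D$-positive by your hypothesis; when $g$ is incompatible with $h'$ one has $A_g\cap\bigl((\lambda\setminus Y)\cup X\bigr)\supseteq A_g\setminus Y=A_g$, which is $D$-positive by independence.
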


\begin{proof} Let $\cA_0$ be the set of all $Y\in \FI_S(\cG)$ such that 
(1) holds. Assume for a moment that $\cA_0$ satisfies (2). 
Then let $\cA\subseteq \cA_0$ be maximal with respect to the property that
$A_{h}\cap A_{h'}=\emptyset$ mod $D$ for all $h\neq h'$ in $\cA$. 
Then $\cA$ still satisfies (1) and (2) and 
the standard $\Delta$-system argument (see \cite{Sh:c} or 
\cite{Ku:Book}) shows that $\cA$ is countable. 

We may therefore assume there is $h\in \FI(\cG)$ such that for all $s\in \cA_0$ we have
both $A_s\cap A_h\neq \emptyset$ mod $D$ and $A_h\setminus A_s\neq\emptyset$ mod 
$D$. Let $D'$ be the filter generated by $D$ and $X\cap A_h$. Since the first part of (1) fails 
for $h$, we have that 
$D'$ is a proper extension of $D$. Since the second part of (1) fails for every $s$ 
extending $h$, we have
that $\cG$ is independent modulo $D'$. 
This contradicts the assumed maximality of $D$. 
\end{proof}

Lemma~\ref{L.Cohen} implies that 
 for every $X\subseteq\bbN$ there is a countable $\cG_0\subseteq \cG$ such that
$\cA$ satisfying the above conditions is included in $\FI_s(\cG_0)$. In this situation we say 
$X$ is \emph{supported} by $\cG_0$.

\begin{prop}\label{P.construction} 
Assume $\phi(\vec x, \vec y)$ is  a  formula and $M_i$, for $i\in \bbN$, 
are models of the same signature such that in $M_i$ there is a $\preceq_\phi$-chain 
of length~$i$. 
Then for every linear order $I$ of cardinality $\leq \fc$ there exists an 
ultrafilter $\cU$ on $\bbN$ such that  $\prod_{\cU} M_n$ 
includes a weakly $(\aleph_1,\phi)$-skeleton like  
 $\phi$-chain~$\cC$ isomorphic to~$I$. 
\end{prop}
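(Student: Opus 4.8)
The plan is to build the ultrafilter $\cU$ as a maximal filter $D$ on $\bbN$ for which a carefully chosen family of functions $\cG \subseteq \bbN^{\bbN}$ stays independent, and to read the desired $\phi$-chain off of a system of elements of $\prod_{n} M_n$ indexed by $I$. First I would fix, for each $n$, a $\preceq_\phi$-chain $\langle \bar c^n_0, \dots, \bar c^n_{n-1}\rangle$ in $M_n$ of length $n$; the idea is that an element of the ultraproduct is essentially a choice function $n \mapsto \bar c^n_{k(n)}$ with $k(n) < n$, and the induced $\preceq_\phi$-relation on such elements is controlled by comparing the indices $k(n)$ for $D$-many $n$. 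So the combinatorial target is: embed $I$ into a ``$D$-ultrapower of the natural numbers'' in a way that respects $<$ and that, crucially, is \emph{weakly $(\aleph_1,\phi)$-skeleton like}. The key device for the latter is Lemma~\ref{L.Cohen}: because $D$ is chosen maximal with $\cG$ independent, every subset $X \subseteq \bbN$ is \emph{supported} by a countable $\cG_0 \subseteq \cG$, and this countability is exactly what will furnish, for each $\bar a$ in the ultraproduct, a countable $\cC_{\bar a} \subseteq \cC$ witnessing the skeleton-like property.

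The construction proper: enumerate $I$ (or rather build an $I$-indexed ``name'' system) using $\fc$ many independent functions, one block of functions responsible for coding each element of $I$, arranged so that the Boolean combinations that decide $\phi(\bar a_i, \bar a_j)$ for $i,j \in I$ are exactly governed by whether $i < j$ in $I$. Concretely, to $i \in I$ I would assign an element $f_i \in \bbN^{\bbN}$ built from $\cG$ so that for $i <_I j$ the set $\{n : f_i(n) < f_j(n) < n\}$ is $D$-large (hence in $\cU$), giving $\bar a_i \prec_\phi \bar a_j$ via the fixed chains in the $M_n$; and so that for an arbitrary element $\bar a$ of the ultraproduct, $\bar a$ is supported by countably many functions from $\cG$, which pins down the ``cut'' that $\bar a$ makes in the chain $\langle f_i : i \in I \rangle$ up to a countable set $\cC_{\bar a}$. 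Then for $\bar b \preceq_\phi \bar c$ in $\cC$ with $[\bar b, \bar c]_\phi \cap \cC_{\bar a} = \emptyset$, the truth values $\phi(\bar b, \bar a)$ and $\phi(\bar c, \bar a)$ agree because, $D$-modulo the support of $\bar a$, the sets defining them coincide; this is the heart of the verification and is where Lemma~\ref{L.Cohen}(1)--(2) get used in tandem with \L o\'s's theorem. I would also need a bookkeeping/recursion of length $\fc$ that simultaneously (a) extends $D$ to decide enough sets to make $\cU$ an ultrafilter and (b) preserves independence of the generating $\cG$ at every stage, invoking maximality at the end — the $\Delta$-system argument cited in the proof of Lemma~\ref{L.Cohen} keeps the relevant families countable throughout.

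The main obstacle I expect is the interaction between \emph{non-transitivity} of $\prec_\phi$ and the skeleton-like requirement: since $\prec_\phi$ need not be a partial order, I cannot simply say ``$\bar a$ makes a Dedekind cut in $\cC$'' and must instead argue, using the $\phi$/$\psi$ pair ($\psi(\bar x,\bar y) := \phi(\bar y,\bar x)$) and the countable support $\cG_0$ of $\bar a$, that the \emph{two} biconditionals in Definition~\ref{Def.wsl} are each stabilized off a countable subset of $\cC$ — and that the same countable set works for both. Managing this amounts to checking that the finitely many Boolean atoms $A_h$ ($h \in \cA$, $\cA$ countable) that support $\bar a$ cut the index chain into countably many intervals on each of which the relevant $\phi$-truth-value is eventually constant as computed in $\prod_n M_n$; the fixed chains $\langle \bar c^n_k\rangle$ in each $M_n$ are only $\preceq_\phi$-chains, not more, so one must be careful that ``eventually constant truth value along the chain index $k(n)$'' genuinely transfers through \L o\'s. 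A secondary technical point is handling the cardinality bound $|I| \le \fc$ cleanly when $|I| < \fc$ (just pad $\cG$) and ensuring the coding functions $f_i$ can be chosen inside the span of $\cG$ while keeping $\cG$ itself independent mod the growing filter. Once these are in place, Proposition~\ref{P.construction} follows, and combined with Proposition~\ref{P.counting} it yields the $2^\fc$ many nonisomorphic ultrapowers in Theorem~\ref{T1}.
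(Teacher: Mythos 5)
Your framework is the same as the paper's: fix a $\preceq_\phi$-chain $a_i(n)$ of length $n$ in $M_n$, take an independent family $\cG\subseteq\bbN^{\bbN}$ of size $\fc$, a filter $D$ maximal subject to $\cG$ being independent, enumerate $\cG=\{\ff_i\colon i\in I\}$, and aim to put the sets
\[
B_{ij}=\{n\colon \ff_i(n)\preceq_\phi \ff_j(n)\},\qquad
C_{gij}=\{n\colon \tp_\phi(\ff_i(n)/g(n))=\tp_\phi(\ff_j(n)/g(n))\}
\]
(for $i<j$ in $I$, the latter only when $[i,j]_I$ misses the countable support $\bbS_g$) into an ultrafilter extending $D$. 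The countable-support lemma and its role in producing the sets $\cC_{\bar a}$ is correctly identified.

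The genuine gap is that you never actually prove the \emph{finite intersection property} of the family $\{B_{ij}\}\cup\{C_{gij}\}$ over $D$, and this is the entire content of the proof. Your justification — ``the truth values agree because, $D$-modulo the support of $\bar a$, the sets defining them coincide'' — is not an argument; it is roughly the statement being proved. Indeed, independence of $\cG$ mod $D$ says precisely that for distinct $\ff_i,\ff_j$ and any prescribed values, the corresponding set is merely $D$-positive, not $D$-large, so you cannot get the $B_{ij}$ and $C_{gij}$ into the filter ``for free''; one must verify that a finite subfamily has $D$-positive intersection. The paper's proof of this (Claim~\ref{C.fip}) is a delicate combinatorial argument: it constructs an increasing sequence $h_m\in\FI(\bbS^{\cG}\setminus\cT^{\cG})$ that eventually decides, mod $D$, the $\phi$-type of each $a_l$ against each relevant $g(k)$ on $A_{h_m\cup h}$; then argues by contradiction via Ramsey's theorem and a pigeonhole count of ``$k^*$-types'' that some increasing $h\in\FI(\cT^{\cG})$ must succeed. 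None of this appears in your sketch, and without it the construction does not go through.

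A secondary but concrete misstep: you propose a ``bookkeeping/recursion of length $\fc$ that simultaneously (a) extends $D$ to decide enough sets to make $\cU$ an ultrafilter and (b) preserves independence of $\cG$ at every stage.'' This conflicts with the actual structure. The filter $D$ is fixed once as a maximal filter keeping $\cG$ independent, and its maximality is what powers Lemma~\ref{L.Cohen} and the support phenomenon. After the FIP is established for $\{B_{ij}\}\cup\{C_{gij}\}\cup D$, one takes \emph{any} ultrafilter $\cU$ extending this family; no further recursion is performed, and indeed $\cG$ cannot stay independent mod $\cU$ since $\cU$ is an ultrafilter. So the recursion you envision is not the right shape. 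You should instead focus your effort on proving the finite intersection property.
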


\begin{proof} In order to simplify the notation and release the bound variable $n$ 
we shall assume that 
$\phi$ is a binary formula and hence the elements 
of the $\phi$-chain~$\cC$ will be elements 
of $A$ instead of $n$-tuples of elements from $A$. 
Let $a_i(n)$, for $0\leq i<n$, be a $\preceq_\phi$-chain in $M_n$. 
For convenience of notation, we may assume 
\[
a_i(n)=i
\]
for all $i$ and $n$, and we also write $a_i(n)=n-1$ if $i\geq n$.  
Fix an independent family $\cG$ of size $\fc$ of functions $f\colon \bbN\to \bbN$
(see \cite[Appendix, Theorem~1.5(1)]{Sh:c}). 
Fix a filter $D$ on $\bbN$ such that $\cG$ is independent with respect to $D$ and 
$D$ is a maximal (under the inclusion) filter with this property. 
Let   $\FI(\ccG)$, $A_h$ for  $h\in \FI(\ccG)$ and $\FI_s(\ccG)$
be as  introduced before Lemma~\ref{L.Cohen}. 
The  following is an immediate consequence of  
 Lemma~\ref{L.Cohen} (i.e.,  of \cite[Claim~VI.3.17(5)]{Sh:c}). 
 
\begin{claim}\label{C.const.1} 
For every $g\in \prod_{n\in \bbN} M_n$ there is a countable set $\bbS_g\subseteq I$
such that  for all $l\in\bbN$ both sets
\begin{align*} 
X_{g,l}&=\{n\colon M_n\models \phi(a_l(n),g(n))\}\\
Y_{g,l}&=\{n\colon M_n\models \phi(g(n),a_l(n))\}
\end{align*} 
are supported by $\{f_i\colon i\in \bbS_g\}$. \qed 
\end{claim} 

Fix an enumeration of $\ccG$ by elements of $I$ and write $\ccG=\{\ff_i\colon i\in I\}$. 
For $i<j$ in $I$ write $[i,j]_I$ for the interval $\{k\in I\colon i\leq k\leq j\}$. 
For elements $a\preceq_\phi b$ in a model $M$ write 
\[
[a,b]_\phi=\{c\in M\colon a\preceq_\phi c \text{ and } c\preceq_\phi b\}. 
\]
Since $\preceq_\phi$ is not necessarily transitive, this notation should be taken 
with a grain of salt. 
For $i<j$ in $I$ write
\[
B_{ij}=\{n\colon \ff_i(n)\preceq_\phi \ff_j (n)\}. 
\]
(Note that by our convention about $a_i(n)$ we have that
$\ff_i(n)\preceq_\phi \ff_j(n)$ is equivalent to $\ff_i(n)\leq \ff_j(n)$.)
For $g\in \prod_{n\in\bbN} M_n$ and $i<j$ in $I$ such that $[i,j]_\phi\cap \bbS_g=\emptyset$ let
\begin{align*}
C_{gij}=\{n\colon&
M_n\models \phi(\ff_i(n), g(n))\liff \phi(\ff_j(n), g(n))\\
&\text{ and } 
M_n\models\phi(g(n),\ff_i(n))\liff \phi(g(n),\ff_j(n))\}.
\end{align*}
In other words, $C_{gij}=\{n\colon \tp_\phi(f_i(n)/g(n))=\tp_\phi(f_j(n)/g(n))$, 
with $\tp_\phi$ as computed in $M_n$.

\begin{claim} \label{C.fip} 
The family of all sets
$B_{ij}$ for $i<j$ in $I$ and $C_{gij}$ for $g\in \prod_{n\in\bbN} M_n$ and
$i<j$ in $I$ such that $[i,j]_i\cap \bbS_g=\emptyset$ has the finite intersection 
property. 
\end{claim} 

\begin{proof} It will suffice to show that for $\bar k\in\bbN$, $i(0)<\dots <i(\bar k-1)$ in $I$, and 
$g(0),\dots, g(\bar k-1)$ in $\prod_{n\in\bbN} M_n$ the set 
\begin{multline*}
\bigcap_{l<m<\bar k} B_{i(l),i(m)}\cap\\
\bigcap\{C_{g(k),i(l), i(m)}\colon k<\bar k, l<m<\bar k, \text{ and } 
[i(l),i(m)]_I\cap \bbS_{g(k)}=\emptyset\}
\end{multline*}
is nonempty. 
Let
\[
\bbS=\bigcup_{k<\bar k}\bbS_{g(k)}.
\]
Write $\cT=\{{i(k)}\colon k<\bar k\}$, also 
$\cT^{\cG}=\{f_i\colon i\in \cT\}$ and $\bbS^{\cG}=\{f_i\colon i\in \bbS\}$. 
 
 Pick~$h_m$, for $m\in\bbN$, in $\FI(\bbS^{\cG}\setminus \cT^{\cG})$ so that
\begin{enumerate}
\item $h_m\subseteq h_{m+1}$ for all $m$ and 
\item For all $h\in \FI(\cT^{\cG})$, all $l\in\bbN$ and
 all $k<\bar k$, for all but finitely many $m$ we have either
\begin{enumerate}
\item [(i$_X$)] $(\forall^D n\in A_{h_m\cup h})M_n\models \phi(a_l(n),g(k)(n))$, or
\item [(ii$_X$)] $(\forall^D n\in  A_{h_m\cup h})M_n\models \lnot\phi(a_l(n),g(k)(n))$
\end{enumerate}
and also either 
\begin{enumerate}
\item [(i$_Y$)] $(\forall^D n\in  A_{h_m\cup h})M_n\models \phi(g(k)(n),a_l(n))$, or
\item [(ii$_Y$)]  $(\forall^D n\in  A_{h_m\cup h})M_n\models \lnot\phi(g(k)(n),a_l(n))$. 
\end{enumerate}
\end{enumerate}
The construction of $h_m$ proceeds recursively as follows. 
Enumerate all triples $(h,k,l)$ in $\FI(\cT^{\cG})\times \bar k\times\bbN$ by 
elements of $\bbN$. Let $h_0=\emptyset$. If $h_m$ has been chosen and $(h,k,l)$ is 
the $m$-th triple then use the fact that $X_{g(k),l}$ and $Y_{g(k),l}$ are supported by $\bbS$ 
(Claim~\ref{C.const.1}) to find $h_{m+1}\in \FI(\bbS^{\cG}\setminus \cT^{\cG})$ such that 
$A_{h_{m+1}\cup h}$ satisfies one of (i$_X$) and (ii$_X$) and one of (i$_Y$) or (ii$_Y$). 
Then  the sequence of $h_m$ constructed as above clearly satisfies the requirements. 

In order to complete the proof we need to show that there exist $h\in \FI(\cT^{\cG})$ and $n$ such 
that 
\begin{multline}\label{Eq.1}  
A_{h_n\cup h}\subseteq ^D 
\bigcap_{l<m<\bar k} B_{i(l),i(m)}\cap\\
\bigcap\{C_{g(k),i(l), i(m)}\colon k<\bar k, l<m<\bar k, \text{ and } 
[i(l),i(m)]_I\cap \bbS_{g(k)}=\emptyset\}.
\end{multline}
In order to have $A_{h_n\cup h}\subseteq^D B_{i(l),i(m)}$ it is necessary and sufficient 
to have  $h(i(l))<h(i(m))$. We shall therefore consider only $h$ that are \emph{increasing} 
in this sense. An increasing function in $\FI(\cT^{\cG})$ is uniquely determined by its range. 
For $t\in [\bbN]^{\bar k}$ let $h_t$ denote the increasing function in $\FI(\cT^{\cG})$ 
whose range is equal to $t$.

Assume for a moment that for every $t\in [\bbN]^{\bar k}$ there are
$k,l,m$ such that   for all $n$ we have 
$A_{h_n\cup h_t} \not\subseteq^D  C_{g(k),i(l),i(m)}$ and therefore by the choice 
of the sequence  $\{h_n\}$ that 
\begin{equation*}  
A_{h_n\cup h_t} \cap C_{g(k),i(l),i(m)}=^D \emptyset. 
\end{equation*}
For $t\in [\bbN]^{\bar k}$ let $\psi(t)$ be the lexicographically minimal triple $(k,l,m)$ 
such that this holds for a large enough $n$. By Ramsey's theorem, there are an infinite 
$Z\subseteq \bbN$ and a triple $(k^*,l^*,m^*)$ such that for every $t\in [\bbN]^{\bar k}$ 
we have $A_{h_n\cup h_t} \cap C_{g(k),i(l),i(m)}=^D \emptyset$.

Let $N=|[i(l^*),i(m^*)]_I\cap \cT|$ and 
find $t\in [Z]^{\bar k}$ such that the set
\[
[h_t(i(l^*)),h_t(i(m^*))]\cap Z
\]
has at least $3N+1$ elements. Let $h'=h\rs (\cT^{\cG}\cap \bbS_{g(k^*)}^{\cG})$. 
Then for each $p\in \bbN$ there is a large enough $m=m(p)$ such 
that  
either  (i$_X)$ or (ii$_X$) holds, 
and  either (i$_Y$) or (ii$_Y$) holds. 
 We say that such $m$ \emph{decides the $k^*$-type of $p$}. 

Pick $m$ large enough to decide the $k^*$-type of each $p\in [h'(i(l^*)),h'(i(m^*))]\cap Z$. 
Since there are only four different $k^*$-types, by the pigeonhole principle there are $N$
elements of $[h'(i(l^*)),h'(i(m^*))]\cap Z$ with the same $k^*$-type. 
There is therefore  $t^*\in [Z]^{\bar k}$  such that $h_{t^*}$ extends $t'$ and 
all $N$ elements of $t^*\cap [h'(i(l^*)),h'(i(m^*))]$ have the same $k^*$-type. 
This means that $h_n\cup h_{t^*}\subseteq^D C_{g(k^*),i(l^*),i(m^*)}$, 
contradicting  $\psi(t^*)=(k^*,l^*,m^*)$. 

Therefore  there exists $t\in [\bbN]^{\bar k}$ such that for every $k<\bar k$
and all $l<m<\bar k$ such that $[i(l),i(m)]_I\cap \bbS_{g(k)}=\emptyset$ for some 
$n=n(k,l,m)$ we have
\begin{equation*} 
A_{h_n\cup h_t} \subseteq^D C_{g(k),i(l),i(m)}.
\end{equation*}
Then $h_t$ and  $n=\max_{k,l,m} n(k,l,m)$ satisfy \eqref{Eq.1} and this completes the proof. 
\end{proof} 

By Claim~\ref{C.fip} we can find an ultrafilter $\cU$ such that 
 the sets 
 $B_{ij}$ for $i<j$ in $I$ and $C_{gij}$ for $g\in \prod_{n\in\bbN} M_n$ and
$i<j$ in $I$ such that $[i,j]_\phi\cap I=\emptyset$ all belong to $\cU$. 
Let $\ba_i$ be the element of the ultrapower $\prod_{\cU} M_n$
with the representing sequence $\ff_i$ if $i\in I$ and 
with the representing sequence $a_i(n)$, for $n\in\bbN$, if $i\in\bbN$. 
Since the relevant $A_{ki}$ and $B_{ij}$ belong to $\cU$ we have
that $\ba_i$, $i\in I$, is a $\phi$-chain in the ultraproduct. 

In order to check it is weakly $(\aleph_1,\phi)$-skeleton like fix $\bg\in \prod_{\cU} M_n$ and
a representing 
sequence $g\in \prod_n M_n$ of $\bg$. 
 Let $J_{g}= \{\ff_i\colon i\in \bbS\}$. If $i<j$ are  
 such that $[i,j]_I\cap J_g=\emptyset$, 
 then $C_{gij}\in \cU$, which implies 
 that $\prod_{\cU} M_n\models \phi(\ba_i,\bg)\liff \phi(\ba_j,\bg)$ and
 $\prod_{\cU} M_n\models\phi(\bg,\ba_i)\liff\phi(\bg,\ba_j)$, as required. 
 \end{proof} 

\begin{remark} \label{R.I.1} 
As pointed out in Remark~\ref{R.I.0}, the proof of Proposition~\ref{P.construction} can 
be easily modified to obtain $\cU$ such that $\prod_{\cU} M_i$
includes a $\phi$-chain $\cC$ isomorphic to $I$ that satisfies the indiscernibility property (*) 
stronger than being weakly $(\aleph_1,\phi)$-skeleton like 
stated there. In order to achieve this, we only need to add a variant $D_{ijg\psi}$ 
of the set $C_{ijg}$ to the filter basis from Claim~\ref{C.fip}
 for
every $k\in \bbN$, every $k+n$-ary formula $\psi(\vec x,\vec y)$ and every $g\in A^k$. 
Let 
\begin{align*}
D_{ijg\psi}=\{n\colon&
M_n\models \psi(\ff_i(n), g(n))\liff \psi(\ff_j(n), g(n))\}.
\end{align*}
The obvious modification of the proof of Claim~\ref{C.fip} 
shows that the augmented family of sets still has the 
finite intersection property. It is clear that any ultrafilter $\cU$ extending this family is as required.  
\end{remark} 

\section{The proof of Theorem~\ref{T1}}
\label{S.proof.T1}

 Fix a model A of cardinality 
$\leq\fc$ whose theory is  unstable.
By \cite[Theorem~2.13]{Sh:c} the
theory of $A$ has the order property and we can fix $\phi$ in the signature of
$A$ such that $A$ includes arbitrarily long finite $\phi$-chains. Therefore 
Theorem~\ref{T1} is a special case of the following with $A_i=A$ for all $i$. 

\begin{thm}\label{T1+}  Assume CH fails. 
Assume $\phi(\vec x, \vec y)$ is  a  formula and $A_i$, for $i\in \bbN$, 
are models of cardinality $\leq\fc$ such that in $A_i$ there is a $\preceq_\phi$-chain of length $i$. 
Then 
there are $2^{\mathfrak c}$ isomorphism types of models of the form $\pcU A_n$, 
where $\cU$ ranges over nonprincipal ultrafilters on~$\bbN$. 
\end{thm}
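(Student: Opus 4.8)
The strategy is to combine the machinery built up in Sections \ref{S.Invariants}--\ref{S.Construction} into a counting argument in the spirit of Proposition~\ref{P.counting}, but for ultraproducts rather than a fixed family of models. Fix $\lambda=\fc$; since CH fails, $\lambda\geq\aleph_2$, so the invariant machinery of \S\ref{S.m,lambda} and \S\ref{S.RI} applies at cardinality $\lambda$. By Lemma~\ref{L.many-invariants} there is a single $m\in\bbN$ and a family of $2^\lambda$ pairwise disjoint representing sequences of $m,\lambda$-invariants, realized by linear orders $I_\zeta$, for $\zeta<2^\lambda$, each of cardinality $\lambda$. For each such $I_\zeta$, apply Proposition~\ref{P.construction} (whose hypothesis — that $A_n$ contains a $\preceq_\phi$-chain of length $n$ — is exactly what we are assuming) to obtain an ultrafilter $\cU_\zeta$ on $\bbN$ such that $\prod_{\cU_\zeta} A_n$ contains a weakly $(\aleph_1,\phi)$-skeleton like $\phi$-chain isomorphic to $I_\zeta$.

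The next step is to verify that $\prod_{\cU_\zeta} A_n$ has cardinality exactly $\lambda=\fc$: it has size at least $\fc$ because it contains $I_\zeta$, and at most $\fc$ because each $A_n$ has size $\leq\fc$ and there are only $\fc^{\aleph_0}=\fc$ functions in $\prod_n A_n$. So each $\prod_{\cU_\zeta} A_n$ is a model of cardinality $\lambda$, and I would now like to feed it into the invariant-counting apparatus. The formula $\phi$ witnesses the order property in $\prod_{\cU_\zeta} A_n$ as well (by \L o\'s's theorem, since it does so in the $A_n$ for large $n$), so its theory is unstable; hence all the definitions of \S\ref{S.OP}--\S\ref{S.counting} apply with $A$ replaced by the ultraproduct. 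By Lemma~\ref{L.J}, since $\inv^{m,\lambda}(I_\zeta)$ is defined and $I_\zeta$ embeds as a weakly $(\aleph_1,\phi)$-skeleton like $\phi$-chain, we get $\inv^{m,\lambda}(I_\zeta)\in\INV^{m,\lambda}(\prod_{\cU_\zeta}A_n,\phi)$.

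Now the pigeonhole step: suppose toward a contradiction that among the $2^\lambda$ ultraproducts $\prod_{\cU_\zeta}A_n$ there are only $\leq\lambda$ isomorphism types (or even $<2^\lambda$). Isomorphic models have the same set of $m,\lambda$-invariants, so there is a single model $B$ (of cardinality $\lambda$) and $2^\lambda$ indices $\zeta$ with $\inv^{m,\lambda}(I_\zeta)\in\INV^{m,\lambda}(B,\phi)$. Since the $I_\zeta$ were chosen so that their representing sequences are \emph{pairwise disjoint}, this produces $2^\lambda$ disjoint representing sequences of invariants inside $\INV^{m,\lambda}(B,\phi)$, directly contradicting Lemma~\ref{L.disj.1}, which bounds the size of any such disjoint family by $|B|=\lambda$. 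Therefore there are $2^\lambda=2^\fc$ isomorphism types, as claimed.

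\textbf{Main obstacle.} The delicate point is not the final counting — that is essentially Proposition~\ref{P.counting} repackaged — but making sure the disjointness of the representing sequences survives the passage through isomorphism and through Lemma~\ref{L.J}. Concretely, one must check that Lemma~\ref{L.J} assigns to $I_\zeta$ precisely the invariant $\inv^{m,\lambda}(I_\zeta)$ with \emph{its} distinguished representing sequence (the one produced in Lemma~\ref{L.many-invariants}), so that disjointness of the family $\{I_\zeta\}$ translates into disjointness of the corresponding family inside $\INV^{m,\lambda}(B,\phi)$; this is where the careful bookkeeping about representing sequences being unique only modulo the relevant filter $\calD(\cf(\lambda),\aleph_1)$ or $\calD_{h_\lambda}(\aleph_2)$ matters, and one should confirm that "disjoint" is robust under changing a sequence on a filter-small set — it is, provided the disjointness in Lemma~\ref{L.many-invariants} is witnessed on the whole index set rather than just modulo the filter, which is exactly how that lemma's proof arranges it. A secondary routine check is that an ultraproduct of unstable-at-large-$n$ theories is itself unstable, so that $\phi$ genuinely witnesses the order property in each $\prod_{\cU_\zeta}A_n$ and Lemma~\ref{L.J} is applicable there.
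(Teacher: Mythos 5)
Your proof is correct and follows the paper's approach: the paper likewise fixes $\lambda=\fc$, invokes Lemma~\ref{L.many-invariants} to get $2^\fc$ linear orders with disjoint representing sequences of $m,\fc$-invariants, applies Proposition~\ref{P.construction} to each to build the ultrafilters, and concludes by Proposition~\ref{P.counting}. The only difference is cosmetic: you inline the proof of Proposition~\ref{P.counting} (Lemma~\ref{L.J}, Lemma~\ref{L.disj.1}, and the pigeonhole) rather than citing it as a black box, and your ``main obstacle'' discussion correctly identifies why disjointness must be witnessed everywhere rather than merely modulo the filter — which is exactly how Lemma~\ref{L.many-invariants} is set up.
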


\begin{proof}
Since $|A_i|\geq i$ for all $i$, the ultrapower $\prod_{\cU} A$ has cardinality equal to $\fc$ 
whenever $\cU$ is a nonprincipal ultrafilter on $\bbN$. 
By Lemma~\ref{L.many-invariants} , 
 there are $2^{\mathfrak c}$  
  linear orders $I$ of cardinality $ \mathfrak c$ with 
disjoint 
 representing sequences corresponding to (defined) 
 invariants $\inv^{m,\fc}(I)$ (with $m=2$ or $m=3$ 
 depending on wheher $\fc$ is regular or not). 
 Use Proposition~\ref{P.construction} to construct an ultrafilter $\cU(I)$
such that $I$ is isomorphic to a 
weakly $(\aleph_1,\phi)$-skeleton like $\phi$-chain $\cC$ in $\prod_{\cU(I)} A_i$.  
The conclusion follows by Proposition~\ref{P.counting}. 
\end{proof}

\section{Ultrapowers of metric structures}
\label{S.metric} 

\subsection{Metric structures}
In this section we prove a strengthening of Theorem~\ref{T1.m} 
which is the analogue of Theorem~\ref{T1+} for  metric structures.  
First we include the definitions pertinent to understanding the statement 
of Theorem~\ref{T1.m}. 
Assume $(A,d,f_0,f_1,\dots,R_0,R_1,\dots)$ is a metric structure. 
Hence $d$ is a complete metric on $A$ such that the diameter of $A$ is equal to 1,  
each $f_i$ is a function from some finite power of $A$ into $A$, 
and each $R_i$ is a function from 
a finite power of $A$ into $[0,1]$. All $f_i$ and all $R_i$ are
 required to be uniformly continuous with respect to $d$, with  a 
 fixed modulus of uniform continuity  (see \cite{BYBHU} or \cite[\S 2]{FaHaSh:Model2}).

If~$\cU$ is an ultrafilter on $\bbN$ then on $A^{\bbN}$ we define 
a quasimetric $d_{\cU}$ by letting, for $\ba=(a_i)_{i\in \bbN}$ 
and $\bb=(b_i)_{i\in \bbN}$, 
\[
d_{\cU}(\ba,\bb)=\{i\in \bbN\colon \lim_{i\to \cU}d(a_i,b_i)\}.
\]
Identify pairs $\ba$ and $\bb$ such that $d_{\cU}(\ba,\bb)=0$. 
The uniform continuity implies that $f_n(\ba)=\lim_{i\to \cU} f_n(a_i)$ 
and $R_n(\ba)=\lim_{i\to \cU}R_n(b_i)$  
are uniformly continuous functions with respect to the quotient metric. 
The  quotient structure is denoted by $\prod_{\cU} (A,d,\dots)$ (or shortly $\prod_{\cU} A$ if 
the signature is clear from the context)
and called the \emph{ultrapower of $A$ associated with $\cU$}. 
An ultraproduct of metric structures of the same signature 
 is defined analogously. 

The assumption that the metric $d$ is finite is clearly necessary 
in order to have $d_{\cU}$ be a metric. However, one can 
show that the standard ultrapower constructions of C*-algebras and 
of II$_1$ factors can  essentially be considered as special 
cases of the above definition (see 
\cite{FaHaSh:Model2} for details). These two constructions served as a motivation
for our work (see \S\ref{S.Applications}). 

More information on  the logic of metric structures
is given in~\cite{BYBHU}, and \cite{FaHaSh:Model2} contains an exposition of 
its variant suitable for   C*-algebras and II$_1$ factors. 

Let $A=(A,d,\dots) $ be a metric structure. 
 Interpretations of formulas are functions uniformly continuous with respect to $d$, 
and the value of an $n$-ary  formula $\psi$ at an $n$-tuple  $\vec a$ is denoted by 
\[
\psi(\vec a)^A.
\] 
We assume that the theory of $A$ is unstable, 
and therefore by \cite[Theorem~5.4]{FaHaSh:Model2}
it has the order property. Fix $n$ and a $2n$-ary formula $\phi$ that witnesses the
order property of the theory 
of $A$. 
Define the relation $\preceq_\phi$ on every model such that $\phi$ is a formula in 
 its signature  
by letting $\vec a\preceq_\phi \vec b$ if and only if 
\[
\phi(\vec a,\vec b)=0\text{ and } \phi(\vec b,\vec a)=1. 
\]
Theorem~\ref{T1.m} is a consequence of the following.

\begin{thm}\label{T1.m+}  Assume CH fails. 
Assume $\phi(\vec x, \vec y)$ is  a  formula and $A_i$, for $i\in \bbN$, 
are metric structures of 
 cardinality $\leq\fc$ of the same signature 
 such that in $A_i$ there is a $\preceq_\phi$-chain of length $i$. 
Then 
there are $2^{\mathfrak c}$ isometry types of models of the form $\pcU A_n$, 
where $\cU$ ranges over nonprincipal ultrafilters on~$\bbN$. 
\end{thm}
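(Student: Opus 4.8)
The proof of Theorem~\ref{T1.m+} follows the same architecture as the proof of Theorem~\ref{T1+}, so I would first isolate precisely which ingredients need to be re-proven in the metric setting rather than merely cited. The overall strategy is unchanged: produce $2^{\fc}$ linear orders of cardinality $\fc$ with pairwise disjoint representing sequences of $m,\fc$-invariants (this is Lemma~\ref{L.many-invariants}, a purely set-theoretic statement about linear orders, so it carries over verbatim); then realize each such linear order $I$ as a weakly $(\aleph_1,\phi)$-skeleton like $\phi$-chain in some ultraproduct $\prod_{\cU(I)}A_n$; then invoke the counting machinery to conclude that distinct invariants force nonisometric ultraproducts. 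The relation $\preceq_\phi$ is already defined in the metric context by $\phi(\vec a,\vec b)=0\wedge\phi(\vec b,\vec a)=1$, and the notions of $\phi$-chain and of being weakly $(\aleph_1,\phi)$-skeleton like (Definition~\ref{Def.wsl}) make sense unchanged, since they only refer to the binary relation $\preceq_\phi$ and to the truth values $\phi(\cdot,\cdot)\in\{0,1\}$ at the endpoints. Consequently the combinatorial lemmas of \S\ref{S.Invariants} and the invariant-representation lemmas of \S\ref{S.Representing} (Lemmas~\ref{L.triv}, \ref{L0}, \ref{L1}, \ref{L.2.6}, \ref{L.tp-phi}, \ref{L.tp}, \ref{L.J}, \ref{L.disj.1} and Proposition~\ref{P.counting}) should go through with only cosmetic changes, once one checks that "$A^n$" is replaced by the metric structure's $n$-th power and that $\INV^{m,\lambda}(A,\phi)$ still has size $\leq|A|=$ character density of $A$.

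The real work is the analogue of Proposition~\ref{P.construction}: given $\phi$ and metric structures $M_i$ each containing a $\preceq_\phi$-chain of length $i$, and given a linear order $I$ of cardinality $\leq\fc$, build a nonprincipal ultrafilter $\cU$ on $\bbN$ so that $\prod_{\cU}M_n$ contains a weakly $(\aleph_1,\phi)$-skeleton like $\phi$-chain isomorphic to $I$. I would set up the construction exactly as before: fix finite $\preceq_\phi$-chains $a_0(n),\dots,a_{n-1}(n)$ in $M_n$, fix an independent family $\cG=\{\ff_i:i\in I\}$ of functions $\bbN\to\bbN$ of size $\fc$, and a filter $D$ maximal with respect to $\cG$ being independent mod $D$, so that Lemma~\ref{L.Cohen} applies. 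The point where metric structures differ is that $\phi(x,y)$ is now $[0,1]$-valued rather than $\{0,1\}$-valued, so the sets $X_{g,l}$, $Y_{g,l}$, $B_{ij}$, $C_{gij}$ must be defined using the discrete relation $\preceq_\phi$ (i.e.\ using the conditions $\phi(\cdot,\cdot)=0$ and $\phi(\cdot,\cdot)=1$) rather than the continuous value of $\phi$. Since $\{\,(s,t): \phi^{M_n}(s,t)=0\wedge\phi^{M_n}(t,s)=1\,\}$ is an honest binary relation on each $M_n$, Claim~\ref{C.const.1} (supportedness via Lemma~\ref{L.Cohen}), Claim~\ref{C.fip} (the finite-intersection property via the Ramsey/$\Delta$-system argument), and the final extraction of $\cU$ proceed verbatim; one then sets $\ba_i=[\ff_i]_\cU$ for $i\in I$ and checks, exactly as before, that this is a $\phi$-chain which is weakly $(\aleph_1,\phi)$-skeleton like inside $\prod_{\cU}M_n$.

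With the metric analogue of Proposition~\ref{P.construction} in hand, the theorem is immediate: for each of the $2^{\fc}$ linear orders $I$ produced by Lemma~\ref{L.many-invariants}, fix $\cU(I)$ so that $\prod_{\cU(I)}A_n$ contains a weakly $(\aleph_1,\phi)$-skeleton like copy of $I$; then by (the metric version of) Lemma~\ref{L.J} the invariant $\inv^{m,\fc}(I)$ lies in $\INV^{m,\fc}(\prod_{\cU(I)}A_n,\phi)$, while by (the metric version of) Lemma~\ref{L.disj.1} each such ultraproduct has at most $\fc$ disjoint representing sequences of invariants; a pigeonhole count (Proposition~\ref{P.counting}) then forces $2^{\fc}$ pairwise nonisometric ultraproducts, since any isometry between metric structures preserves the value of every formula, hence preserves $\preceq_\phi$ and hence the whole invariant. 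The \textbf{main obstacle} I anticipate is purely bookkeeping rather than conceptual: one must verify carefully that every place in \S\ref{S.Invariants}--\S\ref{S.Construction} where a $\{0,1\}$-valued formula or a classical elementary submodel is used can be replaced by its metric counterpart without loss — in particular that "elementary submodel of a metric structure" (in the sense of \cite{BYBHU}) suffices in the proof of Lemma~\ref{L1} and Lemma~\ref{L.J}, and that the character density, rather than cardinality, is the correct parameter to track throughout (noting $|\prod_{\cU}A_n|=\fc$ in the relevant sense because $\chi(A_i)\leq\fc$ and the chain has length $\geq i$). Since all the genuinely nontrivial model-theoretic input — that an unstable theory has the order property — is supplied by \cite[Theorem~5.4]{FaHaSh:Model2} and the rest of the argument only manipulates the discrete relation $\preceq_\phi$, I expect no essential new difficulty beyond this transcription.
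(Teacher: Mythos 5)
Your overall architecture is correct, and you are right that the set-theoretic material (Lemma~\ref{L.many-invariants}) and much of \S\ref{S.Representing} transfer. But the central claim — that the ultrafilter construction ``proceeds verbatim'' once one replaces the $\{0,1\}$-valued formula by the \emph{discrete} relation $\preceq_\phi$ via the conditions $\phi(\cdot,\cdot)=0$ and $\phi(\cdot,\cdot)=1$ — is where the proof breaks down, and the paper explicitly warns that this is ``the most drastic modification.''

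The gap is this. In the ultraproduct $\prod_{\cU}M_n$, the value of $\phi$ at a pair of elements is the \emph{ultralimit} $\lim_{n\to\cU}\phi(\cdot,\cdot)^{M_n}$, not a coordinatewise Boolean condition. Thus, for an arbitrary $\bg\in\prod_{\cU}M_n$ with representing sequence $g$, the relation $\ba_i\preceq_\phi\bg$ can hold in the ultraproduct even though $\{n:\ff_i(n)\preceq_\phi g(n)\text{ in }M_n\}\notin\cU$ — it is enough that $\phi(\ff_i(n),g(n))^{M_n}\to 0$ and $\phi(g(n),\ff_i(n))^{M_n}\to 1$ along $\cU$. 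Your discrete sets $C_{gij}=\{n:\ff_i(n)\preceq_\phi g(n)\liff\ff_j(n)\preceq_\phi g(n)\}$ therefore do \emph{not} force $\ba_i\preceq_\phi\bg\liff\ba_j\preceq_\phi\bg$ in the ultraproduct, and the resulting chain is not weakly $(\aleph_1,\phi)$-skeleton like — not even in the weakened sense of Remark~\ref{R.I.-1}. (For the $\phi$-chain condition itself, i.e.\ $\ba_i\preceq_\phi\ba_j$ for $i<j$ in $I$, your discrete $B_{ij}$ are fine because there the coordinates are themselves members of a discrete chain; the problem is exclusively the quantification over arbitrary $\bg$ in Definition~\ref{Def.wsl.m}.) The correct fix, as in the paper's Proposition~\ref{P.construction.m}, is to replace the discrete sets by their $\e$-approximations: support sets $X_{g,l,r}$, $Y_{g,l,r}$ for each rational threshold $r\in[0,1]$ in Claim~\ref{C.const.1.m} (still countably many, so Lemma~\ref{L.Cohen} applies), and sets $C_{gij\e}$ for all $\e>0$ in Claim~\ref{C.fip.m}. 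This in turn changes the Ramsey/pigeonhole count: with $M>2/\e$ there are now $(M+1)^2$ possible $k^*$-types rather than four, so one must take the relevant interval of $Z$ to contain at least $(M^2+2M)N+1$ elements rather than $3N+1$. None of this is ``cosmetic''; it is the genuine content of the metric construction.

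A second, related omission: the paper changes Definition~\ref{Def.wsl} to the metric Definition~\ref{Def.wsl.m}, in which $\tp_\phi(\vec a/\vec b)=\langle\phi(\vec a,\vec b)^A,\phi(\vec b,\vec a)^A\rangle$ is a pair of \emph{reals}, and insists on this real-valued version in the counting lemma Lemma~\ref{L.disj.1.m+} (whose proof uses uniform continuity of $\phi^A$ to bound the number of $(A,B,\phi,m)$-invariants by $\chi(A)$) and in the continuity-based Lemma~\ref{L.3.9.m}. For the specific case of Theorem~\ref{T1.m+} one can get away with $\chi(A)=|A|=\fc$ and the cruder cardinality count of Lemma~\ref{L.disj.1}, so this second point is less fatal here than in the general-density setting; but it is still worth noting that the ``discrete'' notion of skeleton-like is too weak to support the paper's general argument, and your blanket claim that Definition~\ref{Def.wsl} can be reused unchanged is false.
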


The proof proceeds along the same lines as the proof of Theorem~\ref{T1+} and we shall only
outline the novel elements, section by section.

\subsection{Combinatorics of the invariants} 
 For $\vec a\in A^n$ and  $\bar b\in A^n$
 write 
\[
\tp_\phi(\vec a/\vec b)=\langle \phi(\vec a,\vec b)^A,\phi(\vec b,\vec a)^A\rangle.
\] 
  For  $\vec a\in A^n$ and $X\subseteq A^n$, let $\tp_\phi(\vec a/X)$  be the function 
from $ X$ into $[0,1]^2$ defined by 
\[
\tp_\phi(\vec a/X)(\vec b)=
\tp_\phi(\vec a/\vec b).
\]
A \emph{$\phi$-chain} $\cC$ in $A$ is a subset of $A^n$ linearly ordered by $\preceq_\phi$. 
 The notation and terminology such as $[\vec a,\vec b]_\phi$ have exactly the same
interpretation as in~\S\ref{S.OP}. 

\begin{definition} \label{Def.wsl.m}
A $\phi$-chain $\cC$ is \emph{weakly $(\aleph_1,\phi)$-skeleton like in $A$} if 
for every $\vec a\in A^n$ there is a countable $\cC_{\vec a}\subseteq \cC$
such that for all $\vec b$ and $\vec c$ in $\cC$ satisfying
\[
(-\infty,\vec b]_\phi\cap \cC_{\vec a}=
(-\infty,\vec c]_\phi\cap \cC_{\vec a}
\]
we have $\tp_\phi(\vec a/\vec b)=\tp_\phi(\vec a/\vec c)$. 
\end{definition}

Note that $(\cC,\preceq_\phi)$ is an honest (discrete) linear ordering. Because of this
a number of the proofs in the discrete case work in the metric case unchanged. 
In particular, 
 Lemma~\ref{L.wf}, Lemma~\ref{L0},  Lemma~\ref{L1} and Lemma~\ref{L.2.6}
 are true with the new definitions and the old proofs. 
Definition~\ref{Def.Defines} and the definition of $\cC[B,\vec c]$ are
 transferred to the metric case unmodified, 
 using the new definition of $\tp_\phi$. 
As a matter of fact, the analogue of  Remark~\ref{R.I.-1} applies in the metric context. 
That is, even if weakly $(\aleph_1,\phi)$-skeleton like is defined by requiring only that
(with $\bar a$, $\cC_{\bar a}$, $\bar b$ and $\bar c$ as in 
Definition~\ref{Def.wsl.m}) we only have
\[
\bar a\leq_\phi \bar b\text{ if and only if } 
\bar a\leq_\phi \bar c
\]
and 
\[
\bar b\leq_\phi \bar a\text{ if and only if } 
\bar c\leq_\phi \bar a
\]
then all of the above listed lemmas remain true, with the same proofs, in the metric context. 
However, Lemma~\ref{L.disj.1.m+} 
below requires the original, more restrictive, notion of weakly ($\aleph_1,\phi$)-skeleton like.

\subsection{Defining an  invariant over a  submodel} 
Definition~\ref{Def.Defines} is unchanged. 
The statement and the proof of  
  Lemma~\ref{L.tp-phi} remain unchanged. 
However, in order to invoke it in the proof of the metric analogue of Lemma~\ref{L.tp} 
we shall need Lemma~\ref{L.3.9.m} below. For a metric structure $B$ 
its \emph{character density}, the smallest cardinality of a dense subset, 
is denoted by~$\chi(B)$.  Note that $\chi(A)\geq |\cC|$ for every $\phi$-chain $\cC$ in
$A$, since each $\phi$-chain is necessarily discrete. 

\begin{lemma} \label{L.3.9.m} 
Assume $\cC=\langle a_i\colon i\in I\rangle$ is a $\phi$-chain 
that is weakly $(\aleph_1,\phi)$-skeleton like in a metric structure $A$. 
Assume $B$ is an elementary submodel of  $A$ and $\vec a\in \cC\setminus B^n$ is  such that 
\[
\cf(\cC[B,\bar a],\leq_{\lnot\phi})>\chi(B). 
\]
Then there is $\bar c\in \cC[B,\bar a]$ such that for 
all $\bar d\in \cC[B,\bar a]\cap (-\infty,\bar c]_\phi$
we have $\tp_\phi(\bar d/B)=\tp_\phi(\bar c/B)$. 
\end{lemma}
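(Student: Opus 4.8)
The plan is to exploit the fact that $\cC[B,\bar a]$, with the reverse order $\preceq_{\lnot\phi}$, is a $\phi$-chain of cofinality strictly larger than $\chi(B)$, and to use a counting/pigeonhole argument on the $\phi$-types over $B$. First I would recall from Lemma~\ref{L.wf} (which holds in the metric setting by the remarks in \S\ref{S.metric}) that $\cC[B,\bar a]$ is itself weakly $(\aleph_1,\phi)$-skeleton like in $A$, and in particular each $\bar b\in B^n$ has a countable witnessing set $\cC_{\bar b}\subseteq \cC[B,\bar a]$ (after intersecting the original $\cC_{\bar b}$ with $\cC[B,\bar a]$). Let $\cC' = \cC[B,\bar a]$ ordered by $\preceq_{\lnot\phi}$, so $\cf(\cC')>\chi(B)$.

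Next I would form the set $\cC_B = \bigcup\{\cC_{\bar b}\colon \bar b \in D\}$ where $D$ is a dense subset of $B$ of size $\chi(B)$; this has cardinality at most $\chi(B)\cdot \aleph_0 = \chi(B)$. Since $\cf(\cC',\preceq_{\lnot\phi})>\chi(B)$, the set $\cC_B$ is bounded in $(\cC',\preceq_{\lnot\phi})$, so we may pick $\bar c\in \cC'$ with $\cC_B\cap (-\infty,\bar c]_\phi=\emptyset$ — more precisely, $\bar c$ is chosen $\preceq_{\lnot\phi}$-below every element of $\cC_B$, i.e.\ $\preceq_\phi$-above them. Then for any $\bar d \in \cC'\cap (-\infty,\bar c]_\phi$ and any $\bar b\in D$, the interval $[\bar d,\bar c]_\phi$ is disjoint from $\cC_{\bar b}$, so by the defining property (Definition~\ref{Def.wsl.m}) we get $\tp_\phi(\bar d/\bar b)=\tp_\phi(\bar c/\bar b)$. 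Since the maps $\bar b\mapsto \phi(\bar d,\bar b)^A$ and $\bar b\mapsto\phi(\bar b,\bar d)^A$ are uniformly continuous on $A^n$ and $D$ is dense in $B$, equality of $\tp_\phi(\bar d/\cdot)$ and $\tp_\phi(\bar c/\cdot)$ on $D$ propagates to all of $B^n$, giving $\tp_\phi(\bar d/B)=\tp_\phi(\bar c/B)$ as required.

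The one subtlety — and the step I expect to require the most care — is the passage from a dense subset $D$ to all of $B$, i.e.\ the use of uniform continuity to conclude $\tp_\phi(\bar d/B)=\tp_\phi(\bar c/B)$ from $\tp_\phi(\bar d/D)=\tp_\phi(\bar c/D)$. Here one must be slightly cautious because $B^n$ dense in $B^n$ is with respect to the product metric, and $\phi$ is uniformly continuous jointly in both arguments; so for $\bar b\in B^n$ arbitrary and $\bar b'\in D^n$ close to $\bar b$ one has $\phi(\bar d,\bar b)^A$ close to $\phi(\bar d,\bar b')^A=\phi(\bar c,\bar b')^A$ close to $\phi(\bar c,\bar b)^A$, and letting $\bar b'\to\bar b$ forces $\phi(\bar d,\bar b)^A=\phi(\bar c,\bar b)^A$; symmetrically for the other coordinate. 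This is where the definition of weakly $(\aleph_1,\phi)$-skeleton like in the metric case, phrased in terms of the full function $\tp_\phi(\vec a/X)$ into $[0,1]^2$, is exactly what is needed, and one should note that it forces $\cC_{\bar b}$ to genuinely control $\tp_\phi$ and not merely the qualitative relation $\preceq_\phi$ — this is precisely the distinction flagged in the paragraph preceding the lemma about why the weaker variant of \`\`skeleton like'' from Remark~\ref{R.I.-1} would not suffice for the results downstream.
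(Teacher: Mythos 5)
Your proof is correct and follows essentially the same route as the paper's: fix a dense $D\subseteq B$ of cardinality $\chi(B)$, use the cofinality hypothesis to find $\bar c\in\cC[B,\bar a]$ that is $\preceq_\phi$-below the set $\cC_B=\bigcup\{\cC_{\bar b}\colon\bar b\in D^n\}\cap\cC[B,\bar a]$ (so that $\cC_B\cap(-\infty,\bar c]_\phi=\emptyset$), apply the skeleton-like property to get $\tp_\phi(\bar d/\bar b)=\tp_\phi(\bar c/\bar b)$ for all $\bar b\in D^n$, and extend to all of $B^n$ by uniform continuity of $\phi$. One small slip worth flagging: in your parenthetical gloss of the choice of $\bar c$ the orientation of $\preceq_{\lnot\phi}$ is reversed --- an upper bound for $\cC_B$ in $(\cC[B,\bar a],\preceq_{\lnot\phi})$ is $\preceq_{\lnot\phi}$-\emph{above} (equivalently $\preceq_\phi$-\emph{below}) its elements, and that is what yields $\cC_B\cap(-\infty,\bar c]_\phi=\emptyset$; the displayed set condition you state and everything downstream of it are correct as written. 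Also, the union should range over $\bar b\in D^n$ rather than $\bar b\in D$, since $\cC_{\bar b}$ is defined for $n$-tuples, though this does not affect the cardinality bound.
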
 

\begin{proof} Pick a dense $B_0\subseteq B$ of cardinality $\chi(B)$. 
Let $\bar c\in \cC[B,\bar a]$ be such that 
\[
\cC[B,\bar a]\cap \bigcup\{\cC_{\bar b}\colon \bar b\in B_0^n\}\cap 
 (-\infty,\bar c]_\phi=\emptyset. 
 \]
 Then for every $\bar d\in \cC[B,\bar c]\cap (-\infty,\bar c]_\phi$ and every $\bar b\in B_0^n$ we
 have that $[\bar d,\bar c]_\phi\cap \cC_{\bar b}=\emptyset$, and therefore 
 $\tp_\phi(\bar c/\bar b)=\tp_\phi(\bar d/\bar b)$. 
Since the maps $\bar x\mapsto \tp_\phi(\bar c/\bar x)$ and 
$\bar x\mapsto \tp_\phi(\bar d/\bar x)$ are continuous, they agree on all of $B^n$
and therefore $\tp_\phi(\bar c/B)=\tp_\phi(\bar d/B)$. 
\end{proof}

\subsection{Representing invariants} 
 The definition of $\INV^{m,\lambda}(A,\phi)$
  from \S\ref{S.RI} transfers to the metric  context verbatim, and
  Lemma~\ref{L.J} and its proof are unchanged. 

\subsection{Counting the number of invariants over a model} 
 Lemma~\ref{L.trivial} is  unchanged but  
 Lemma~\ref{L.disj.1} needs to be modified, since the right 
 analogue of cardinality of a model is its character density. 
 
 \begin{lemma} \label{L.disj.1.m} 
 For $A,\phi,m$ as usual 
every  set of disjoint representing sequences of invariants in 
$\INV^{m,\chi(A)}(A,\phi)$ has size at most $\chi(A)$. 
\end{lemma}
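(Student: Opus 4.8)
The plan is to mirror the proof of Lemma~\ref{L.disj.1} in the discrete case, replacing the cardinality $|A|$ of the model by its character density $\chi(A)$ throughout, and to check that the one place where discreteness/cardinality was genuinely used — the step bounding the number of distinct $(A,A_\xi,\phi,m)$-invariants definable by elements of $A^n$ — still goes through when $A$ is a metric structure. The point is that although $A$ itself may have cardinality up to $2^{\chi(A)}$, a $\phi$-chain in $A^n$ is necessarily discrete (any two points of a $\phi$-chain are at distance bounded below by the modulus of uniform continuity of $\phi$), so any $\phi$-chain, and hence any invariant-defining configuration, lives inside a set of size $\leq\chi(A)$.

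First I would set $\lambda=\chi(A)$ and, exactly as in Lemma~\ref{L.disj.1}, treat the regular case in detail and remark that the two singular cases are essentially identical. Assuming for contradiction that there are $\lambda^+$ pairwise disjoint representing sequences $\bd(\eta)=\langle \bd(\eta)_\xi\colon\xi<\lambda\rangle/\calD(\lambda,\aleph_1)$ of invariants in $\INV^{m,\lambda}(A,\phi)$, I would fix a representation $A_\xi$, $\xi<\lambda$, of $A$ (a continuous chain of elementary submodels of character density $<\lambda$ with union $A$ — here one must use that a metric structure of character density $\lambda$ has such a representation, which is the analogue of the Löwenheim–Skolem fact used in \S\ref{S.RI}). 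For each $\eta$ pick $S_\eta\in\calD(\lambda,\aleph_1)$ witnessing that $\bd(\eta)$ is an $m,\lambda$-invariant of $A,\phi$, so for $\xi\in S_\eta$ some $\vec c^\eta_\xi\in A^n$ defines the $(A,A_\xi,\phi,m)$-invariant $\bd(\eta)_\xi$. By Lemma~\ref{L.trivial} there is a single $\xi<\lambda$ lying in $\lambda^+$ many of the $S_\eta$; by disjointness of the representing sequences this yields $\lambda^+$ pairwise distinct $(A,A_\xi,\phi,m)$-invariants, each defined by some element of $A^n$.

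The main obstacle — and the reason the \emph{restrictive} notion of weakly $(\aleph_1,\phi)$-skeleton like (Definition~\ref{Def.wsl.m}) is needed — is to bound the number of distinct $(A,A_\xi,\phi,m)$-invariants by $\chi(A)$ rather than by $|A|$. Here I would argue that an $(A,B,\phi,m)$-invariant $\bd$ is determined by the $\phi$-type $\tp_\phi(\vec c/B)$ of any $\vec c$ defining it together with the invariant $\inv^m(I)$ of the associated chain, and that by Lemma~\ref{L.tp-phi} and Lemma~\ref{L.3.9.m} this type, viewed as a continuous function $B^n\to[0,1]^2$, is determined by its restriction to a dense subset of $B$ of size $\chi(B)=|A_\xi|<\lambda$; since there are at most $\lambda$ such continuous functions and the chains $I$ live inside $A^n$ restricted to a fixed $\phi$-chain of size $\leq\chi(A)=\lambda$, there are at most $\lambda$ such invariants. (This is precisely where disjointness in the restrictive sense, as opposed to the weaker variant of Remark~\ref{R.I.-1}, is used: one needs that the actual values $\phi(\vec c,\vec b)^A$, not merely the order relations, are controlled.) This contradicts the existence of $\lambda^+$ many distinct ones, completing the regular case; the singular cases are handled by the same substitution $|A|\rightsquigarrow\chi(A)$ in the corresponding parts of Lemma~\ref{L.disj.1}.
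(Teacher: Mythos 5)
There is a genuine gap in the counting step, which is exactly where the metric case differs from the discrete one. You correctly reduce, via Lemma~\ref{L.trivial}, to bounding the number of distinct $(A,A_\xi,\phi,m)$-invariants. You then assert that this is bounded by $\lambda=\chi(A)$ because the $\phi$-type $\tp_\phi(\vec c/A_\xi)$, being a continuous function $A_\xi^n\to[0,1]^2$, is determined by its restriction to a dense subset of size $\chi(A_\xi)<\lambda$, and that therefore ``there are at most $\lambda$ such continuous functions.'' That cardinality claim is false: a continuous function on a space of density character $\mu$ is indeed determined by its restriction to a dense set of size $\mu$, but the number of such functions is as large as $2^\mu$, which need not be $\leq\lambda$ even when $\mu<\lambda$ (take $\mu=\aleph_0$, $\lambda=\aleph_1$). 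So this step does not give a bound of $\chi(A)$; it doesn't even beat the trivial bound $|A|$.

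The argument the paper actually uses (Lemma~\ref{L.disj.1.m+}) is an $\e$-net argument that avoids counting types altogether. One sets $\psi=h\circ\phi$, where $h$ collapses $[0,1/3]$ to $0$ and $[2/3,1]$ to $1$, and observes two things: (*) any $\vec c$ that defines the $(A,B,\phi,m)$-invariant $\bd$ also defines the $(A,B,\psi,m)$-invariant $\bd$, because every weakly $(\aleph_1,\phi)$-skeleton like $\phi$-chain is, with the same witnessing sets $\cC_{\bar a}$, a weakly $(\aleph_1,\psi)$-skeleton like $\psi$-chain; and (**) by uniform continuity of $\phi^A$ there is $\delta>0$ such that if $d^n(\vec c_1,\vec c_2)<\delta$ then $\bar a\preceq_\phi\vec c_1$ forces $\bar a\preceq_\psi\vec c_2$ and vice versa. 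Combining (*) and (**): if $\vec c_1,\vec c_2$ are $\delta$-close and define $(A,B,\phi,m)$-invariants $\bd_1,\bd_2$, then in the $\psi$-setting the corresponding tail chains are mutually cofinal, so $\bd_1=\bd_2$. A dense subset of $A^n$ of size $\chi(A)$ then yields the bound. This is precisely why the \emph{restrictive} Definition~\ref{Def.wsl.m} (types, not just order relations, are locally constant along the chain) is needed: (*) uses that the $\phi$-type being constant implies the $\psi$-type being constant, which fails if one only controls order relations as in Remark~\ref{R.I.-1}. You gesture at this distinction in your final parenthetical, but attribute it to ``disjointness of the representing sequences'' rather than to the definition of weakly skeleton-like, and in any event you do not have the truncation device that makes it bear fruit.
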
 

\begin{proof} In this paper we shall only need the trivial case when $\chi(A)=|A|=\fc$, 
but the general case is needed in \cite{FaKa:NonseparableII}. It will follow immediately 
from the proof of Lemma~\ref{L.disj.1} with   Lemma~\ref{L.disj.1.m+} below applied in the
right moment. 
\end{proof} 

 \begin{lemma} \label{L.disj.1.m+} 
 For $A,\phi,m$ as usual
 and an elementary submodel $B$ of $A$ 
 there are at most $\chi(A)$ distinct $(A,B,\phi,m)$-invariants. 
\end{lemma}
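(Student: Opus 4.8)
The plan is to follow the proof of Lemma~\ref{L.disj.1}, with the modifications forced by the possibility that $|A|>\chi(A)$. First I would observe, exactly as in the discrete case, that the $(A,B,\phi,m)$-invariant defined by a tuple $\vec c$ depends only on the $\phi$-type $p=\tp_\phi(\vec c/B)$: in Definition~\ref{Def.Defines} the tuple $\vec c$ enters only through clause~\eqref{Def.Defines.4}, so any $\vec c'$ with $\tp_\phi(\vec c'/B)=\tp_\phi(\vec c/B)$ defines the same invariant, which we may call $\bd(p)$ and which is well defined by clause~\eqref{Def.Defines.6}. Hence it suffices to show that the set $P$ of $\phi$-types $p$ over $B$ for which $\bd(p)$ is defined has cardinality at most $\chi(A)$; this is the metric replacement for the step ``$|A^n|=|A|$'' in the discrete argument.

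Next I would record the metric input. Because $\preceq_\phi$ is given by the exact conditions $\phi(\vec a,\vec b)=0$ and $\phi(\vec b,\vec a)=1$, the fixed modulus of uniform continuity of $\phi$ supplies a single $\delta_0>0$ such that $d(\vec c,\vec c')\ge\delta_0$ whenever for some tuple $\vec b$ one of $\phi(\vec c,\vec b),\phi(\vec b,\vec c)$ equals $0$ while the matching value at $\vec c'$ equals $1$. In particular every $\phi$-chain in $A^n$ is $\delta_0$-separated, so has cardinality at most $\chi(A)$, and likewise any $\delta_0$-separated subset of $A^n$ has size at most $\chi(A)$.

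The heart of the argument is then to bound $|P|$. I would fix a maximal $\phi$-chain $\mathcal M\subseteq B^n$ (so $|\mathcal M|\le\chi(A)$) and, for each $p\in P$, pass from an arbitrary witnessing chain as in Definition~\ref{Def.Defines} to one whose $B^n$-part is cofinal, under $\preceq_\phi$, in the $\phi$-subchain $C(p)=\{\vec b\in\mathcal M:p(\vec b)=\langle 1,0\rangle\}$ of $\mathcal M$, using Lemma~\ref{L.3.9.m} to slide the $B^n$-part of the chain onto $\mathcal M$ and Lemma~\ref{L.tp-phi} to see that the tail $\langle\vec a_i:i\in I\rangle$ realizing $p$ — whose $\inv^m$ is $\bd(p)$ — survives. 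Once this is arranged, $C(p)\subseteq\mathcal M$ is determined by $p$, and it is realized by $\delta_0$-separated tuples: if $C(p)\ne C(q)$ then witnesses $\vec c\models p$, $\vec d\models q$ lying in their respective chains differ on some $\vec b\in\mathcal M$ in the way described above, so $d(\vec c,\vec d)\ge\delta_0$; hence $\{C(p):p\in P\}$ is indexed by a $\delta_0$-separated subset of $A^n$ and has size at most $\chi(A)$. Conversely, if $C(p)=C(q)$ then the two witnessing chains share the $B^n$-spine $\mathcal M$ (cut at the same place), so the proof of Lemma~\ref{L.tp} — which uses the type data only over the $B^n$-parts of the two chains, where $p$ and $q$ now agree — applies to their tails, ordered by $\preceq_{\lnot\phi}$, and yields $\bd(p)=\bd(q)$. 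Thus $p\mapsto C(p)$ is at most $\chi(A)$-to-one and constant on fibers, so $|P|\le\chi(A)$, as required. (The three cofinality cases of $\lambda$ in the ambient proof are then handled exactly as in Lemma~\ref{L.disj.1}.)

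I expect the third step — reducing all witnessing chains to a common spine $\mathcal M\subseteq B^n$ while preserving clauses \eqref{Def.Defines.3}--\eqref{Def.Defines.5} — to be the genuine obstacle. This is precisely where the stronger, type-equality form of ``weakly $(\aleph_1,\phi)$-skeleton like'' (Definition~\ref{Def.wsl.m}) must be used rather than the order-only weakening of Remark~\ref{R.I.-1}: without it, Lemma~\ref{L.3.9.m} controls only $\preceq_\phi$ and not $\tp_\phi$ along the new $B^n$-part, and the conclusion $\bd(p)=\bd(q)$ from $C(p)=C(q)$ breaks down. One must also verify, via Lemma~\ref{L.tp-phi} and the uncountability of the cofinalities occurring in defined invariants, that none of the data \eqref{Def.Defines.1}--\eqref{Def.Defines.5} is lost in the passage to the common spine.
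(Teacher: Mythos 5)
Your proposal has a genuine gap, and in a place that is harder to repair than the ``sliding'' step you flag as the obstacle.

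The crucial false step is the claim that the sets $C(p)$ are parametrized by a $\delta_0$-separated set. You defined $\delta_0$ so that $d(\vec c,\vec c')\geq\delta_0$ whenever some $\phi$-value is \emph{exactly} $0$ at $\vec c$ and \emph{exactly} $1$ at $\vec c'$ (or vice versa). But $C(p)\neq C(q)$ only gives you a $\vec b\in\mathcal M$ with $p(\vec b)=\langle 1,0\rangle$ and $q(\vec b)\neq\langle 1,0\rangle$, and the latter can hold with $q(\vec b)$ arbitrarily close to $\langle 1,0\rangle$ --- say $q(\vec b)=\langle 1-\e,\e\rangle$ --- since $\phi$ takes values in the whole interval $[0,1]$. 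Uniform continuity of $\phi$ then gives you nothing; the witnesses $\vec c\models p$ and $\vec d\models q$ can be as close as you like. So your bound on $|\{C(p)\colon p\in P\}|$ does not follow. This is precisely the pathology of the continuous setting that the threshold conditions $\phi=0$ and $\phi=1$ do not separate from nearby values, and it cannot be circumvented by passing to the $\phi$-types or by choosing a common spine $\mathcal M$, because the problematic types are ones that ``almost'' agree with $p$ on $\mathcal M$ without agreeing exactly.

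The paper's proof sidesteps this entirely by replacing $\phi$ with $\psi=h\circ\phi$, where $h$ is the piecewise-linear function with $h\equiv 0$ on $[0,1/3]$ and $h\equiv 1$ on $[2/3,1]$. This rescaling introduces exactly the slack you were missing: any $\phi$-chain is automatically a $\psi$-chain with the \emph{same} $\cC_{\bar a}$-witnesses to being weakly $(\aleph_1,\cdot)$-skeleton like (here the type-equality form of Definition~\ref{Def.wsl.m} is genuinely used, but in a different role than you anticipate), so a tuple $\bar c$ defining an $(A,B,\phi,m)$-invariant $\bd$ also defines the $(A,B,\psi,m)$-invariant $\bd$. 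Uniform continuity then yields a $\delta>0$ so that $d^n(\bar c_1,\bar c_2)<\delta$ forces, for every $\bar a$, the implication $\bar a\preceq_\phi\bar c_1\Rightarrow\bar a\preceq_\psi\bar c_2$ (and symmetrically); running the Lemma~\ref{L.tp} argument with $\psi$ in place of $\phi$ shows the two tuples then define the same $\psi$-invariant. Thus the map from defining tuples to invariants is locally constant at scale $\delta$, and $\chi(A)$-many $\delta$-balls cover $A^n$. No common spine, no sliding, and no separation claim about types is needed.
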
 

\begin{proof} Let $\lambda=\chi(A)$. 
Let $h\colon \bbR\to [0,1]$ be the continuous function such that  
 $h(x)=0$ for $x\leq 1/3$, $h(x)=1$ for $x\geq 2/3$, and $h$ linear on $[1/3,2/3]$. 
 Let $\psi=h\circ \phi$. 
 
 Note that every $\phi$-chain is a $\psi$-chain. 
 Also, $\phi(\bar a_1,\bar b_1)=\phi(\bar a_2,\bar b_2)$ implies
 $\psi(\bar a_1,\bar b_1)=\psi(\bar a_2,\bar b_2)$, and therefore 
 every 
 weakly $(\aleph_1,\phi)$-skeleton like $\phi$-chain is 
 weakly $(\aleph_1,\psi)$-skeleton like, with the same witnessing sets $\cC_{\bar a}$. 
  This implies the following, for every elementary submodel $B$ of $A$ and $m\in \bbN$. 
  \begin{enumerate}
  \item [(*)] If $\bar c\in A^n$ defines the 
  $(A,B,\phi,m)$-invariant $\bd$ 
  then $\bar c$ defines the $(A,B,\psi,m)$-invariant $\bd$. 
  \end{enumerate}
  Denote the sup metric on $A^n$ by $d^n$. 
Since $\phi^A$ is a uniformly continuous function, there is $\delta>0$ sufficiently small so 
that $d^n(\bar c_1,\bar c_2)<\delta$ implies $|\phi(\bar a,\bar c_1)-\phi(\bar a,\bar c_2)|<1/3$
for all $\bar a$. Therefore we have the following. 
\begin{enumerate}
\item [(**)] For every $\bar a\in A^n$ we have
that $\bar a\leq_\phi \bar c_1$ implies $\bar a\leq_\psi \bar c_2$, 
and $\bar a\leq_\phi \bar c_2$ implies $\bar a\leq_\psi \bar c_1$. 
\end{enumerate}
Assume $B$ is an elementary submodel of $A$ and 
 $\bar c_i$ defines the $(A,B,\phi,m)$-invariant $\bd_i$, for $i=1,2$. 
By (*) we have that $\bar c_i$ defines the $(A,B,\psi,m)$-invariant $\bd_i$, for $i=1,2$. 
If $d^n(\bar c_1,\bar c_2)<\delta$ then (**) implies  $\bd_1=\bd_2$. 
\end{proof} 

Proposition~\ref{P.counting} applies in the metric case literally. 

\subsection{Construction of ultrafilters} 
It is the construction of the ultrafilter in \S\ref{S.Construction} that requires the most drastic 
modification.  Although  the statement of Proposition~\ref{P.construction} transfers unchanged, 
the proof of its analogue, Proposition~\ref{P.construction.m},  requires new ideas.

 \begin{prop}\label{P.construction.m} 
Assume $\phi(\vec x, \vec y)$ is  a  formula and $M_i$, for $i\in \bbN$, 
are metric structures 
of the same signature such that in $M_i$ there is a $\preceq_\phi$-chain of length $i$. 
Assume  $I$ is a linear order of cardinality $\leq \fc$. Then there is an 
ultrafilter $\cU$ on $\bbN$ such that  $\prod_{\cU} M_n$ 
includes a $\phi$-chain $\{\ba_i\colon i\in I\}$ 
that is  weakly $(\aleph_1,\phi)$-skeleton like.  
\end{prop}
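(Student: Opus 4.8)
The proof will follow the skeleton of the proof of Proposition~\ref{P.construction}, reusing the independent-family machinery (the family $\cG$, the maximal filter $D$, Lemma~\ref{L.Cohen} and the notion of support), and replacing the combinatorial decision on the $\{0,1\}$-valued truth of $\phi(a_l(n),g(n))$ with a decision on its \emph{real} value up to arbitrarily fine precision. As before, fix a $\preceq_\phi$-chain $a_0(n)\prec_\phi\dots\prec_\phi a_{n-1}(n)$ in $M_n$, fix an independent family $\cG=\{\ff_i\colon i\in I\}$ of size $\fc$ indexed by $I$, and fix a maximal filter $D$ for which $\cG$ is independent. The sets $B_{ij}=\{n\colon \ff_i(n)\preceq_\phi\ff_j(n)\}$ are defined verbatim, and Claim~\ref{C.const.1} (that each $\{n\colon M_n\models\phi(a_l(n),g(n))\le r\}$ is supported by a countable subfamily indexed by some countable $\bbS_g\subseteq I$) goes through with the same proof, since Lemma~\ref{L.Cohen} is about arbitrary subsets of $\bbN$.

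\textbf{The key modification.} In place of $C_{gij}$ I would introduce, for each rational $\e>0$, the set
\[
C^\e_{gij}=\{n\colon |\phi(\ff_i(n),g(n))-\phi(\ff_j(n),g(n))|\le\e\text{ and }|\phi(g(n),\ff_i(n))-\phi(g(n),\ff_j(n))|\le\e\},
\]
defined whenever $[i,j]_I\cap\bbS_g=\emptyset$. The target filter basis consists of all $B_{ij}$ together with all $C^\e_{gij}$. Granting the finite intersection property (the main point, below), pick an ultrafilter $\cU$ extending this basis, let $\ba_i$ have representing sequence $\ff_i$, and let $\bg\in\prod_{\cU}M_n$ have representing sequence $g$ supported by $\{\ff_i\colon i\in\bbS\}$ with $\bbS=\bbS_g$. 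For $i<j$ with $[i,j]_I\cap\bbS=\emptyset$ and every rational $\e>0$ we have $C^\e_{gij}\in\cU$, so $|\phi(\ba_i,\bg)-\phi(\ba_j,\bg)|\le\e$; since $\e$ is arbitrary, $\phi(\ba_i,\bg)=\phi(\ba_j,\bg)$ and likewise on the other side, i.e.\ $\tp_\phi(\bg/\ba_i)=\tp_\phi(\bg/\ba_j)$. Taking $\cC_{\bg}=\{\ba_i\colon i\in\bbS\}$ witnesses that $\{\ba_i\colon i\in I\}$ is weakly $(\aleph_1,\phi)$-skeleton like in the sense of Definition~\ref{Def.wsl.m}.

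\textbf{The main obstacle: finite intersection property.} The heart of the matter is the analogue of Claim~\ref{C.fip}: for finitely many indices $i(0)<\dots<i(\bar k-1)$, finitely many $g(0),\dots,g(\bar k-1)$, and a rational $\e>0$, the intersection of the corresponding $B$'s and $C^\e$'s is nonempty. I would follow the argument of Claim~\ref{C.fip} almost line for line, with the single change that ``decides the $k^*$-type of $p$'' is replaced by ``decides the value of $\phi(a_p(n),g(k^*)(n))$ and of $\phi(g(k^*)(n),a_p(n))$ to within $\e/2$''. The only real adjustment is that there are no longer four $k^*$-types but rather boundedly many ``$\e$-types'': partitioning $[0,1]^2$ into a grid of mesh $\e/2$ gives $C(\e)=\lceil 2/\e\rceil^2$ cells, and two values lying in the same cell differ by at most $\e/2<\e$. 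So in the pigeonhole step one chooses $t\in[Z]^{\bar k}$ making $[h_t(i(l^*)),h_t(i(m^*))]\cap Z$ have at least $(C(\e)+1)N$ elements (rather than $3N+1$), so that $N$ of them share a common $\e$-type; this forces $h_n\cup h_{t^*}\subseteq^D C^\e_{g(k^*),i(l^*),i(m^*)}$, contradicting $\psi(t^*)=(k^*,l^*,m^*)$. Everything else — the recursive construction of the $h_m$, the use of support (Claim~\ref{C.const.1}), the Ramsey-theorem step, and the final assembly of $h_t$ and $n=\max n(k,l,m)$ — is unchanged, and the uniform continuity of $\phi^{M_n}$ is not even needed here since we are working inside the ultrapower where $\phi$ takes genuine real values and the $\e$-approximations tighten to equality in the limit. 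I expect the verification of this adjusted Ramsey/pigeonhole count to be the only place demanding real care; the rest is bookkeeping carried over from the discrete case.
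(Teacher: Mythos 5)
Your proposal is correct and follows essentially the same path as the paper: replace the Boolean decisions on $\phi$ by decisions on $\phi$-values to within $\e/2$ via a finite grid on $[0,1]^2$, enlarge the pigeonhole count from $3N+1$ to roughly $(\text{number of grid cells})\cdot N$, and otherwise carry over the support/Ramsey/assembly argument from the discrete case verbatim (the paper's $C_{gij\e}$, its parameter $M>2/\e$ with $(M+1)^2$ types and $(M^2+2M)N+1$ points, and Claim~\ref{C.const.1.m} restricted to rational $r$ are exactly your $C^\e_{gij}$, grid of mesh $\e/2$ with $\lceil 2/\e\rceil^2$ cells and $(C(\e)+1)N$ points, and your support claim with rational parameters). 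The only cosmetic point worth making explicit is that in your version of Claim~\ref{C.const.1} you should restrict $r$ to rationals so that the union of the countable supports stays countable; this is implicit in your later use of rational $\e$ but should be stated when defining $\bbS_g$.
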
 

\begin{proof}  Like in the proof of Proposition~\ref{P.construction}, we assume $\phi$ is a 
binary formula in order to simplify the notation. Fix  a $\phi$-chain 
$a_i(n)$, for $0\leq i<n$, in~$M_n$. Like in \S\ref{S.Construction} fix an independent family 
$\cG$ of size $\fc$ and a filter $D$ such that $\cG$ is independent with respect to $D$ and
$D$ is a maximal filter with this property. Define  
$\ccG$, $\FI(\ccG)$ and $\FI_s(\ccG)$ exactly as in \S\ref{S.Construction}. 
Since the diameter of each  $M_n$ is $\leq 1$, each element of $\prod_n M_n$ is 
a representing sequence of an element of the ultrapower. 
Claim~\ref{C.const.1} is modified as follows. 

\begin{claim}\label{C.const.1.m} 
For every $g\in \prod_{n\in \bbN} M_n$ there is a countable set $\bbS_g\subseteq I$
such that  for all $l\in\bbN$ and all $r\in \bbQ\cap [0,1]$ all  sets of the form
\begin{align*} 
X_{g,l,r}&=\{n\colon  \phi(a_l(n),g(n))^{M_n}<r\}\\
Y_{g,l,r}&=\{n\colon  \phi(g(n),a_l(n))^{M_n}<r\}
\end{align*} 
are supported by $\bbS_g$.  
\end{claim} 
\begin{proof} 
Since there are only countably many relevant sets, this is an 
immediate consequence of Lemma~\ref{L.Cohen}. 
\end{proof} 
For  $i<j$ in $ I$  the definitions of  sets 
\[
B_{ij}=\{n\colon \ff_i(n)\preceq_\phi \ff_j (n)\} 
\]
is unchanged, but we need to modify the definition of $C_{gij}$. 
For $g\in \prod_{n\in\bbN} M_n$, $i<j$ in $I$ such that 
$[i,j]_i\cap \bbS_g=\emptyset$
and $\e>0$ let
\begin{align*}
C_{gij\e}=\{n\colon&
| \phi(\ff_i(n), g(n))^{M_n}- \phi(\ff_j(n), g(n))^{M_n}|<\e\\
&\text{ and } 
|\phi(g(n),\ff_i(n))^{M_n}- \phi(g(n),\ff_j(n))^{M_n}|<\e\}.
\end{align*}

\begin{claim} \label{C.fip.m} 
The family of all sets 
$B_{ij}$ for $i<j$ in $I$ and $C_{gij\e}$ for $g\in \prod_{n\in\bbN} M_n$, 
$i<j$ in $I$ such that $[i,j]_i\cap \bbS_g=\emptyset$ and $\e>0$  has the finite intersection 
property. 
\end{claim} 

\begin{proof} It will suffice to show that for $\bar k\in\bbN$, $i(0)<\dots <i(\bar k-1)$ in $I$, and 
$g(0),\dots, g(\bar k-1)$ in $\prod_{n\in\bbN} M_n$ and $\e>0$ the set 
\begin{multline*}
\bigcap_{l<m<\bar k} B_{i(l),i(m)}\cap\\
\bigcap\{C_{g(k),i(l), i(m),\e}\colon k<\bar k, l<m<\bar k, \text{ and } 
[i(l),i(m)]_I\cap \bbS_{g(k)}=\emptyset\}
\end{multline*}
is nonempty. Pick $M\in\bbN$ such that $M>2/\e$. 
Let
\[
\bbS=\bigcup_{k<\bar k}\bbS_{g(k)}.
\]
Write $\cT=\{{i(k)}\colon k<\bar k\}$, also 
$\cT^{\cG}=\{f_i\colon i\in \cT\}$ and $\bbS^{\cG}=\{f_i\colon i\in \bbS\}$. 

 Pick $h_m$, for $m\in\bbN$, in $\FI(\bbS^{\cG}\setminus \cT^{\cG})$ so that
\begin{enumerate}
\item $h_m\subseteq h_{m+1}$ for all $m$ and 
\item For all $h\in \FI(\cT^{\cG})$, all $l\in\bbN$, 
 and all $k<\bar k$
 there exist $r$ and $s$ in $\bbN$ such that $0\leq r\leq M$, $0\leq s\leq M$ and
 for all but finitely many $m$ we have 
\begin{enumerate}
\item [(i$_X$)] $(\forall^D n\in A_{h_m\cup h})|\phi(a_l(n),g(k)(n))^{M_n}-r/M|<\e/2$ and
\item [(i$_Y$)] $(\forall^D n\in  A_{h_m\cup h})| \phi(g(k)(n),a_l(n))^{M_n}-s/M|<\e/2$. 
\end{enumerate}
\end{enumerate}
The construction of $h_m$ is essentially the same as in 
the proof of Claim~\ref{C.fip}, except that it uses Claim~\ref{C.const.1.m} 
in place of Claim~\ref{C.const.1}. 

In order to complete the proof we need to show 
that there exist $h\in \FI(\cT^{\cG})$ and $n$ such that 
\begin{multline}\label{Eq.1.m}  
A_{h_n\cup h}\subseteq ^D 
\bigcap_{l<m<\bar k} B_{i(l),i(m)}\cap\\
\bigcap\{C_{g(k),i(l), i(m),\e}\colon k<\bar k, l<m<\bar k, \text{ and } 
[i(l),i(m)]_I\cap \bbS_{g(k)}=\emptyset\}.
\end{multline}
In order to have $A_{h_n\cup h}\subseteq^D B_{i(l),i(m)}$ it is necessary and sufficient 
to have  $h(i(l))<h(i(m))$. We shall therefore consider only $h$ that are \emph{increasing} 
in this sense. An increasing function in $\FI(\cT^{\cG})$ is uniquely determined by its range. 
For $t\in [\bbN]^{\bar k}$ let $h_t$ denote the increasing function in $\FI(\cT^{\cG})$ 
whose range is equal to $t$.

Assume for a moment that for every $t\in [\bbN]^{\bar k}$ there are
$k,l,m$ such that   for all $n$ we have 
$A_{h_n\cup h_t} \not\subseteq^D  C_{g(k),i(l),i(m),\e}$ and therefore by the choice 
of the sequence  $\{h_n\}$ that 
\begin{equation*}  
A_{h_n\cup h_t} \cap C_{g(k),i(l),i(m),\e}=^D \emptyset. 
\end{equation*}
For $t\in [\bbN]^{\bar k}$ let $\psi(t)$ be the lexicographically minimal triple $(k,l,m)$ 
such that this holds for a large enough $n$. By Ramsey's theorem, there are an infinite 
$Z\subseteq \bbN$ and a triple $(k^*,l^*,m^*)$ such that for every $t\in [\bbN]^{\bar k}$ 
we have $A_{h_n\cup h_t} \cap C_{g(k),i(l),i(m),\e}=^D \emptyset$.

Let $N=|[i(l^*),i(m^*)]_I\cap \cT|$ and 
find $t\in [Z]^{\bar k}$ such that the set
\[
[h_t(i(l^*)),h_t(i(m^*))]\cap Z
\]
has at least $(M^2+2M)N+1$ elements. Let $h'=h\rs (\cT^{\cG}\cap \bbS_{g(k^*)}^{\cG})$. 
Then for each $p\in \bbN$ there are  a large enough $m=m(p)$ such that for some 
$r=r(p)$ and $s=s(p)$ we have
\[
(\forall^D n\in A_{h_m\cup h})|\phi(a_l(n),g(k)(n))^{M_n}-r/M|<\e/2
\]
 and
\[
(\forall^D n\in  A_{h_m\cup h})| \phi(g(k)(n),a_l(n))^{M_n}-s/M|<\e/2.
\] 
     We say that such $m$ \emph{decides the $k^*$-type of $p$}. 
Pick $m$ large enough to decide the $k^*$-type of each $p\in [h'(i(l^*)),h'(i(m^*))]\cap Z$. 
Since there are only $(M+1)^2$ different $k^*$-types, by the pigeonhole principle there are $N$
elements of $[h'(i(l^*)),h'(i(m^*))]\cap Z$ with the same $k^*$-type. 
There is therefore  $t^*\in [Z]^{\bar k}$  such that $h_{t^*}$ extends $t'$ and 
all $N$ elements of $t^*\cap [h'(i(l^*)),h'(i(m^*))]$ have the same $k^*$-type. 
This means that $h_n\cup h_{t^*}\subseteq^D C_{g(k^*),i(l^*),i(m^*),\e}$, 
contradicting  $\psi(t^*)=(k^*,l^*,m^*)$. 

Therefore  there exists $t\in [\bbN]^{\bar k}$ such that for every $k<\bar k$
and all $l<m<\bar k$ such that $[i(l),i(m)]_I\cap \bbS_{g(k)}=\emptyset$ for some 
$n=n(k,l,m)$ we have
\begin{equation*} 
A_{h_n\cup h_t} \subseteq^D C_{g(k),i(l),i(m),\e}.
\end{equation*}
Then $h_t$ and  $n=\max_{k,l,m} n(k,l,m)$ satisfy \eqref{Eq.1.m}. 
\end{proof}

Let $\cU$ be any ultrafilter that extends the family of sets 
from the statement of Claim~\ref{C.fip.m}. 
Since $M_n$ are assumed to be bounded metric spaces, each 
$f_i$ is a representing sequence of an element 
 of the ultraproduct
$\prod_{\cU} M_n$. Denote this element by $\ba_i$ and let 
$\cC$ denote $\langle \ba_i\colon i\in I\rangle$. 
Since $B_{i,j}\in \cU$ for all $i<j$ in $I$, $\cC$ is a $\phi$-chain isomorphic to $I$. 
For $\bb\in \prod_{\cU} M_n$ fix its representing sequence $g$ and 
let $\cC_{\bb}\subseteq \cC$ be 
$\{\ba_i\colon i\in \bbS_{g}\}$. Since 
$C_{g,i,j,\e}\in \cU$ whenever $[i,j]\cap \bbS_g=\emptyset$  and $\e>0$, 
we conclude that $\cC$
 is a weakly $(\aleph_1,\phi)$-skeleton like $\phi$-chain 
as in the  proof in \S\ref{S.Construction}. 
\end{proof}

\subsection{The proof of Theorem~\ref{T1.m+}}
Compiling  the above facts into the proof of Theorem~\ref{T1.m+} 
proceeds  exactly like in  \S\ref{S.proof.T1}. 

\begin{remark} \label{R.I.2} Remark~\ref{R.I.1} applies to 
Proposition~\ref{P.construction.m} in place of Proposition~\ref{P.construction} verbatim. 
\end{remark} 

\section{Types with the order property}
\label{S.Local} 
In this section we prove local versions of Theorem~\ref{T1} and Theorem~\ref{T1.m}
in which the $\phi$-chain is contained in the set of $n$-tuples  realizing a prescribed
type $\bt$ (the definition of a type in the logic of metric 
structures is given below). We will make use of this in case when $\bt$ is the set of 
all $n$-tuples all of whose entries realize a given  $1$-type,  and 
the set of these realizations  is a substructure. In order to conclude that 
a $\phi$-chain is still a $\phi$-chain when evaluated in this substructure,
we will consider a formula $\phi$ that is quantifier-free. 
Throughout this section we assume $A$ is  
a model, 
 $\phi(\vec x, \vec y)$ is  a $2n$-ary   formula in the same 
 signature and~$\bt$ is an $n$-ary type over $A$.

Although the motivation for this section comes from the metric case, we shall 
first provide the definitions and results in the classical case of discrete models. 
An $n$-ary  type $\bt$ over $A$ has the \emph{order property} if there exists 
a $2n$-ary formula $\phi$ such that for 
every finite $\bt_0\subseteq \bt$ and for every $m\in \bbN$ there exists
a $\phi$-chain of length $m$ in $A$ all of whose elements realize~$\bt_0$.

\begin{prop}\label{P.local.1} 
Assume $A$ is countable and type $\bt$ over 
$A$ has the order property, as witnessed by $\phi$. 
 Assume  $I$ is a linear order of cardinality $\leq \fc$. Then there is an 
ultrafilter $\cU$ on $\bbN$ such that  $\prod_{\cU} A$ 
includes a weakly $(\aleph_1,\phi)$-skeleton like 
$\phi$-chain isomorphic to $I$ 
consisting of $n$-tuples  realizing $\bt$. 
\end{prop}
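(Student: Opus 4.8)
The statement is essentially Proposition~\ref{P.construction} relativized to realizations of a type~$\bt$, so the plan is to mimic the proof of Proposition~\ref{P.construction} but with every witnessing $\phi$-chain taken inside the appropriate set of realizations. Since $A$ is countable and $\bt$ is a type over~$A$ with the order property witnessed by~$\phi$, for every finite $\bt_0\subseteq\bt$ and every $m$ there is a $\phi$-chain of length~$m$ all of whose elements realize $\bt_0$. Enumerate $\bt=\{\tau_k\colon k\in\bbN\}$ and set $\bt_0^{(m)}=\{\tau_0,\dots,\tau_{m-1}\}$. First I would, for each $n$, fix a $\phi$-chain $a_0(n)\prec_\phi a_1(n)\prec_\phi\cdots\prec_\phi a_{n-1}(n)$ in $A$ such that every $a_i(n)$ realizes $\bt_0^{(n)}$; extend by $a_i(n)=a_{n-1}(n)$ for $i\geq n$ as before. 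The point of this bookkeeping is that for each fixed formula $\tau_k(\vec x,\vec b)\in\bt$ (with parameters $\vec b$ from~$A$), we have $A\models\tau_k(a_i(n))$ for all sufficiently large~$n$ and all~$i$, so in the ultraproduct $\prod_{\cU}A$ every $\ba_i$ will realize $\tau_k$ regardless of which nonprincipal $\cU$ we pick.

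**Main construction.** Then I would run the ultrafilter construction of Proposition~\ref{P.construction} essentially verbatim: fix an independent family $\cG=\{\ff_i\colon i\in I\}$ of size $\fc$ on $\bbN$ and a filter $D$ maximal with respect to $\cG$ being independent mod $D$; form the sets $B_{ij}$ and $C_{gij}$ exactly as there, using Claim~\ref{C.const.1} (which only uses Lemma~\ref{L.Cohen} and makes no reference to~$\bt$); and verify Claim~\ref{C.fip}, the finite intersection property, with the identical Ramsey-theoretic argument. The only genuinely new ingredient is a further family of sets guaranteeing that the resulting chain lands in realizations of $\bt$: for each $k\in\bbN$ and each instance $\tau_k(\vec x,\vec b)\in\bt$ with parameters realized in the ultrapower by sequences in $\prod_n A$, add the set $T_k=\{n\colon A\models\tau_k(a_i(n),\dots)\text{ for all relevant }i\}$, which is cofinite by the choice of the $a_i(n)$ above and hence compatible with the filter. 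An ultrafilter $\cU$ extending $D$ together with all $B_{ij}$, all $C_{gij}$ with $[i,j]_I\cap\bbS_g=\emptyset$, and all these cofinite sets then exists. Letting $\ba_i$ be represented by $\ff_i$, the chain $\cC=\langle\ba_i\colon i\in I\rangle$ is a $\phi$-chain isomorphic to $I$, is weakly $(\aleph_1,\phi)$-skeleton like by exactly the argument at the end of the proof of Proposition~\ref{P.construction}, and consists of $n$-tuples realizing $\bt$ because each $\tau_k$ holds on a $\cU$-large (indeed cofinite) set of coordinates.

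**The main obstacle.** The subtle point—and where I expect to have to be careful rather than merely routine—is that an $n$-ary type $\bt$ over~$A$ may involve parameters from all of~$A$, whereas the elements $a_i(n)$ live coordinate-wise in~$A$ and the ultrapower embeds $A$ diagonally; I need $\ba_i$ to realize $\bt$ \emph{as a type over the diagonal copy of~$A$ inside $\prod_{\cU}A$}. This works because for a formula $\tau(\vec x,\vec b)\in\bt$ the parameter~$\vec b$ is represented by its constant sequence, and $A\models\tau(a_i(n),\vec b)$ holds for all large~$n$ by the defining property of the order property of~$\bt$ (applied to the finite subtype containing~$\tau$); so the set $T_\tau$ of such~$n$ is cofinite and $\prod_{\cU}A\models\tau(\ba_i,\vec b)$. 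One should also note the count: there are only countably many such $\tau$ since $A$ is countable, so adding all the $T_\tau$ preserves the finite intersection property trivially. With that observation in place the rest is a transcription of \S\ref{S.Construction}, and I would simply remark that ``the proof is identical to that of Proposition~\ref{P.construction}, adding to the filter basis in Claim~\ref{C.fip} the cofinite sets ensuring each $\ba_i$ realizes $\bt$'' rather than reproduce it in full.
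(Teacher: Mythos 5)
Your proposal follows the same path the paper takes---set up $\phi$-chains $a_i(k)$, for $i<k$, of realizations of the $k$-th finite approximation to $\bt$, run the ultrafilter construction of Proposition~\ref{P.construction}, and then observe that the chain elements in the ultrapower realize $\bt$ because the conditions of $\bt$ hold on a cofinite set of coordinates. Conceptually this is correct and matches the paper. But you have elided the one point the paper singles out as the genuinely new step, and it shows up as a type error in your write-up: you say ``Letting $\ba_i$ be represented by $\ff_i$,'' where $\ff_i$ is one of the independent functions from $\bbN$ to $\bbN$. This is fine in Proposition~\ref{P.construction}, where the convention $a_i(n)=i$ lets one identify $\ff_i$ with a genuine representing sequence of elements, but here the chains $\langle a_i(k)\colon i<k\rangle$ consist of $n$-tuples from $A$ that need not coincide across different values of $k$, so $\ff_i$ alone is not a representing sequence. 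The paper's proof explicitly replaces each $g\in\cG$ by the composed function $\hat g(k)=a_{g(k)}(k)$ (with $a_{k-1}(k)$ when $g(k)\geq k$), and it is $\hat g$, not $g$, that represents an element of $\prod_{\cU}A$. Your intermediate text (``so in the ultraproduct $\prod_{\cU}A$ every $\ba_i$ will realize $\tau_k$'') suggests you had the composed sequence in mind, but you never write it down, and without it the definition of $\ba_i$ is literally not defined.

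A secondary remark: once the representing sequences are corrected to $\hat\ff_i$, there is no need to add the cofinite sets $T_\tau$ to the filter basis. The ultrafilter is nonprincipal, so cofinite sets already belong to it, and the Fundamental Theorem of Ultraproducts applied to $\hat\ff_i$ directly gives that each $\ba_i$ realizes $\bt$; this is exactly the observation the paper uses to close the argument. Your extra step is harmless but does no work, and phrasing it as an extension of the filter basis makes it look like a more substantial addition than it is.
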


\begin{proof} Since $\bt$ is countable we may write it as a union 
of finite subtypes, $\bt=\bigcup_{i\in \bbN} \bt_i$. Let $a_i(k)$, for $0\leq i<k$, 
be a $\preceq_{\phi}$ chain in $A$ of elements realizing $\bt_k$. 
Let $\cG$ be an independent family of functions of cardinality $\fc$. 
Unlike the proof of  Proposition~\ref{P.construction}, we cannot identify 
$\cG$ with functions in $\prod_k \{a_i(k)\colon i<k\}$, since we cannot
assume $a_i(k)=a_i(l)$ for all $i<\min(k,l)$. Therefore 
to each $g\in \cG$ we associate a function $\hat g$ such that
\[
\hat g(k)=a_{g(k)}(k) 
\]
if $g(k)<k$ and $\hat g(k)=a_{k-1}(k)$, otherwise. Then 
by the Fundamental Theorem of Ultraproducts $\hat g$ is a representing sequence of an 
element that realizes $\bt$. 
The rest of the  proof is identical to the proof of Proposition~\ref{P.construction}. 
\end{proof} 

In order to state the metric version of Proposition~\ref{P.local.1} we 
import some notation from 
 \cite{Fa:Relative} and \cite{FaHaSh:Model1}. Given  $0\leq \e<1/2$   
 define relation $\preceq_{\phi,\e}$ on $A^n$ via
\[
\vec a_1\preceq_{\phi,\e} \vec a_2 \mbox{ if } \phi(\vec a_1,\vec a_2)\leq \e 
\text{ and } \phi(\vec
a_2,\vec a_1)\geq 1-\e
\]
Note that  $\preceq_{\phi,0}$ coincides with 
 $\preceq_\phi$. A \emph{$\phi,\e$-chain} 
is defined in a natural way.

We shall now define a type in the logic of metric structures, following \cite{BYBHU} and
\cite[\S 4.3]{FaHaSh:Model2}. 
 A \emph{condition} over a model $A$ is
an expression of the form $\phi(\vec x,\vec a) \leq r$ where
$\phi$ is a formula, $\vec a$ is a tuple of elements of $A$ 
and $r \in \bbR$.  
A type $\bt$ over $A$ is a set of conditions over $A$. 
A condition $\phi(\vec x,\vec a)\leq r$ is \emph{$\e$-satisfied} in $A$ by $\vec b$ if
$\phi(\vec b,\vec a)^A\leq r+\e$. 
Clearly a condition is satisfied by $\vec b$ in $A$
if and only if it is $\e$-satisfied by $\vec b$ for all $\e>0$. 
A type $\bt$ is \emph{$\e$-satisfied} by $\bar b$ if all  conditions
in $\bt$ are $\e$-satisfied by $\bar b$.  

An $n$-ary  type $\bt$ over a metric structure
 $A$ has the \emph{order property} if there exists 
a $2n$-ary formula $\phi$ such that for 
every finite $\bt_0\subseteq \bt$ and for every $m\in \bbN$ there exists
a $\phi,1/m$-chain of length $m$ in $A$
consisting of $n$-tuples each of which $1/m$-satisfies $\bt_0$. 

\begin{prop}\label{P.local.1.m} 
Assume $A$ is separable metric structure and type $\bt$ 
over~$A$ has the order property, as witnessed by $\phi$. 
 Assume  $I$ is a linear order of cardinality $\leq \fc$. Then there is an 
ultrafilter $\cU$ on $\bbN$ such that  $\prod_{\cU} A$ 
includes a weakly $(\aleph_1,\phi)$-skeleton like 
$\phi$-chain isomorphic to $I$ 
 and 
consisting of $n$-tuples  realizing $\bt$. 
\end{prop}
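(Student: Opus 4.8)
The plan is to mirror the proof of Proposition~\ref{P.construction.m} (the metric ultrafilter construction) and graft onto it the bookkeeping trick already used in the proof of Proposition~\ref{P.local.1}, which lets us arrange that the elements of the $\phi$-chain realize the prescribed type $\bt$. Since $A$ is separable, $\bt$ is determined by countably many conditions, so write $\bt=\bigcup_{i\in\bbN}\bt_i$ as an increasing union of finite subtypes. Applying the order property of $\bt$, for each $k\in\bbN$ fix a $\phi,1/k$-chain $a_0(k),\dots,a_{k-1}(k)$ in $A$ whose entries $1/k$-satisfy $\bt_k$. Fix an independent family $\cG$ of size $\fc$ and a maximal filter $D$ as before; as in the proof of Proposition~\ref{P.local.1}, we cannot identify $\cG$ directly with sequences taking values among the $a_i(k)$, so to each $g\in\cG$ we associate $\hat g$ with $\hat g(k)=a_{g(k)}(k)$ when $g(k)<k$ and $\hat g(k)=a_{k-1}(k)$ otherwise.

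The combinatorial heart of the argument is then essentially unchanged from Claim~\ref{C.fip.m}: one shows that the sets $B_{ij}$ (for $i<j$ in $I$, saying $\hat f_i(n)\preceq_\phi\hat f_j(n)$ in $M_n=A$), the sets $C_{gij\e}$ recording that $\phi$-types over $g$ in the $n$-th coordinate agree up to $\e$, \emph{together with} the sets $E_{i,\chi\le r}=\{n\colon \chi(\hat f_i(n))^A\le r+1/n\}$ for each condition $\chi(\vec x)\le r$ in $\bt$, form a family with the finite intersection property. The first two families are handled exactly as in the proof of Proposition~\ref{P.construction.m}; the sets $E_{i,\chi\le r}$ are cofinite because $\hat f_i(n)$ eventually $1/n$-satisfies every fixed $\bt_k$, so adding them to any finite intersection changes nothing on a tail. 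Choosing $\cU$ extending this family, we get for each $i\in I$ an element $\ba_i\in\prod_{\cU}A$ (every sequence in $\prod_n A$ represents an element since the diameter is $\le 1$); the $B_{ij}$'s make $\langle\ba_i\colon i\in I\rangle$ a $\phi$-chain isomorphic to $I$, the $C_{gij\e}$'s make it weakly $(\aleph_1,\phi)$-skeleton like exactly as before, and the $E_{i,\chi\le r}$'s together with \L o\'s's theorem for metric structures guarantee $\chi(\ba_i)^{\prod_{\cU}A}\le r$ for every condition in $\bt$, i.e.\ each $\ba_i$ realizes $\bt$.

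The step I expect to require the most care is verifying that throwing the type-conditions $E_{i,\chi\le r}$ into the filter basis does not disturb the Ramsey/pigeonhole argument inside Claim~\ref{C.fip.m}: one must check that in the recursive construction of the functions $h_m$ and in the choice of $t\in[\bbN]^{\bar k}$, the relevant tails can be shrunk to lie inside $\bigcap E_{i,\chi\le r}$ without cost, which works precisely because each $E_{i,\chi\le r}$ is \emph{cofinite} (not merely $D$-large) and there are only countably many of them for a separable $A$. A secondary point worth a sentence is that the $\phi,1/k$-chains (with the slack $1/k$) still yield an honest $\phi$-chain in the ultrapower, since $1/k\to 0$ along $\cU$; this is the same observation used implicitly in Proposition~\ref{P.construction.m} and in \cite{Fa:Relative}, \cite{FaHaSh:Model1}. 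With these checks in place the proof concludes by citing Proposition~\ref{P.construction.m}'s argument verbatim for the skeleton-like property.
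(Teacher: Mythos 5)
Your proposal is correct and follows essentially the same route as the paper: the paper's own proof simply notes that $\ba\preceq_\phi\bb$ in $\prod_\cU A$ iff $\{n\colon a_n\preceq_{\phi,\e}b_n\}\in\cU$ for every $\e>0$ and then refers back to Proposition~\ref{P.local.1} (the $\hat g$ reindexing trick) plus the metric ultrafilter construction of Proposition~\ref{P.construction.m}, which is exactly what you do; your extra sets $E_{i,\chi\le r}$ are harmless but redundant, since once each $\hat f_i(n)$ $1/n$-satisfies $\bt_n$, \L o\'s for metric structures already forces $\ba_i$ to realize $\bt$ for any nonprincipal $\cU$. One small notational correction: in your sketch the sets $B_{ij}$ should record $\hat f_i(n)\preceq_{\phi,1/n}\hat f_j(n)$ rather than the un-slacked $\preceq_\phi$, since the coordinatewise chains are only $\phi,1/k$-chains (a point you do acknowledge later), and the ``only countably many conditions'' remark for separable $A$ should really be ``reducible to countably many by density and uniform continuity.''
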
 

\begin{proof}
For elements $\ba$ and $\bb$ of $\prod_{\cU} A$ and their 
representing sequences $(a_i)_{i\in \bbN}$ 
and $(b_i)_{i\in \bbN}$  we have $\ba\preceq_\phi \bb$ in $\prod_{\cU} A$ 
if and only if $\{i\colon a_i\preceq_{\phi,\e} b_i\}\in \cU$ for every $\e>0$. 
Modulo this observation, the proof is identical to the proof of Proposition~\ref{P.local.1}. 
\end{proof} 

In order to prove versions of Proposition~\ref{P.local.1} and Proposition~\ref{P.local.1.m}
for uncountable (respectively, nonseparable) structures 
we shall need the following well-known lemma. 

\begin{lemma} \label{L.meager} 
Assume $D$ is a meager filter on $\bbN$ extending the Frech\'et filter. 
Then there is a family~$\cG_D$ of cardinality $\fc$ 
of functions in $\bbN^{\bbN}$ that is independent mod $D$. 
\end{lemma}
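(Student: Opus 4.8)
The statement to prove is Lemma~\ref{L.meager}: if $D$ is a meager filter on $\bbN$ extending the Fréchet filter, then there is a family $\cG_D$ of cardinality $\fc$ of functions in $\bbN^{\bbN}$ that is independent mod $D$. The natural approach is to use the classical Talagrand/Jalali-Naini characterization of meager filters: a filter $D$ on $\bbN$ is meager (equivalently, has the Baire property) if and only if there is a partition of $\bbN$ into consecutive finite intervals $\{I_n : n\in\bbN\}$ such that every $X\in D$ meets all but finitely many of the $I_n$; equivalently, the set $\bigcup_n I_{2n}$ and its complement are both $D$-positive, and more strongly any selector hitting cofinitely many blocks is $D$-positive. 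The first step is to invoke this characterization to fix such an interval partition $\langle I_n : n\in\bbN\rangle$ adapted to $D$.

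\textbf{Key steps.} First, transport a classical independent family to the block structure. On the index set $\bbN$ (thought of as indexing the blocks $I_n$) fix an independent family $\{g_\alpha : \alpha<\fc\}$ of functions $\bbN\to\bbN$ in the usual ZFC sense (as cited in the paper, \cite[Appendix, Theorem~1.5(1)]{Sh:c}), so that for distinct $\alpha_0,\dots,\alpha_{k-1}$ and any $j_0,\dots,j_{k-1}\in\bbN$ the set $\{n : g_{\alpha_0}(n)=j_0,\dots,g_{\alpha_{k-1}}(n)=j_{k-1}\}$ is infinite. Second, ``spread'' each $g_\alpha$ across the blocks: define $\hat g_\alpha\colon\bbN\to\bbN$ by $\hat g_\alpha(m)=g_\alpha(n)$ whenever $m\in I_n$, i.e. $\hat g_\alpha$ is constant on each block $I_n$ with value $g_\alpha(n)$. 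Set $\cG_D=\{\hat g_\alpha : \alpha<\fc\}$; clearly $|\cG_D|=\fc$ (distinctness of the $\hat g_\alpha$ follows from distinctness of the $g_\alpha$, since the blocks are nonempty). Third, verify independence mod $D$: given distinct $\hat g_{\alpha_0},\dots,\hat g_{\alpha_{k-1}}$ and values $j_0,\dots,j_{k-1}$, the set $E=\{m : \hat g_{\alpha_0}(m)=j_0,\dots,\hat g_{\alpha_{k-1}}(m)=j_{k-1}\}$ equals $\bigcup\{I_n : g_{\alpha_0}(n)=j_0,\dots,g_{\alpha_{k-1}}(n)=j_{k-1}\}$, which by independence of the $g_\alpha$ is a union of infinitely many blocks $I_n$. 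By the meagerness characterization, a set containing infinitely many whole blocks is $D$-positive (since its complement, missing infinitely many blocks, cannot be in $D$; more carefully one uses that any set meeting cofinitely many blocks is in $D$, hence a set containing infinitely many blocks is positive). Therefore $E\in D^+$, which is exactly independence mod $D$.

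\textbf{Main obstacle.} The one point requiring care — and the place I expect the real content to sit — is the precise form of the Talagrand characterization needed. What is literally true is: $D$ meager iff there is an interval partition such that every $D$-set contains cofinitely many blocks entirely?... no — the correct statement is that every $D$-set \emph{meets} cofinitely many blocks. That only gives that a set containing infinitely many full blocks has infinite intersection with the complement's ``co-side'', which is not immediately positivity. The fix is the standard strengthening: since $D$ extends the Fréchet filter and is meager, one can choose the interval partition so that for the map sending $m\in I_n$ to $n$, the $D$-image is the Fréchet filter on the block-index set; then a block-union $\bigcup_{n\in S}I_n$ is $D$-positive iff $S$ is infinite. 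I would state this as the working form of the characterization (citing Talagrand, or Bartoszyński–Judah), and then the positivity of each $E$ above is immediate because the corresponding $S$ is infinite by independence. I would also remark that this is exactly the lemma's role in the sequel: it supplies the independent family feeding into the ultrafilter constructions of Proposition~\ref{P.construction} and Proposition~\ref{P.construction.m} in the uncountable/nonseparable setting, where one can no longer take $D$ to be a maximal filter on all of $\bbN$ with a full independent family but must instead start from a meager $D$ coding the relevant structure.
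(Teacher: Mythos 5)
Your proof is correct and is essentially the paper's argument: the paper invokes a surjection $h\colon\bbN\to\bbN$ whose preimage of every finite set is finite and of every infinite set is $D$-positive, and forms $\cG_D$ by composing a Fr\'echet-independent family with $h$; your block map $\pi$ (sending $m\in I_n$ to $n$) is exactly such an $h$, and ``spreading $g_\alpha$ across blocks'' is exactly precomposition $g_\alpha\circ\pi$, so you have simply unpacked the Talagrand/Jalali--Naini characterization that underlies the existence of such an $h$. One remark: the worry in your ``Main obstacle'' paragraph is misplaced and no strengthening of the characterization is needed --- if $S\subseteq\bbN$ is infinite and $X\in D$, then $X$ meets all but finitely many blocks and hence meets $I_n$ for some $n\in S$, so $X\cap\bigcup_{n\in S}I_n\neq\emptyset$, which is $D$-positivity of $\bigcup_{n\in S}I_n$ directly.
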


\begin{proof} Let $\cG$ be a family of cardinality $\fc$ that is independent mod the Fr\'echet filter
(\cite[Appendix, Theorem~1.5(1)]{Sh:c}). 
Since $D$ is meager there is a surjection $h\colon \bbN\to \bbN$
such that the $h$-preimage of every finite set is finite and the 
$h$-preimage of every infinite set is $D$-positive  (see e.g., \cite{BarJu:Book}). 
Then  $\cG_D=\{h\circ f\colon f\in \cG\}$ is independent mod $D$ because
the $h$-preimage of every infinite set is $D$-positive. 
\end{proof} 

Again $A,\phi$ and $\bt$ are as above and $A^{<\bbN}$ denotes the 
set of all finite sequences of elements of $A$. Note that $A$ is 
not assumed to be countable. 

\begin{prop}\label{P.local.2} 
Let  $A$ be a model and let $\bt$ is 
a type over $A$. 
Assume there is a function $\bh\in \prod_{k\in \bbN}  A^{k\cdot n}$ 
such that the sets
\[
X[\bt_0,k]=\{i\colon \bh(i)\text{ is a $\phi$-chain  
of $n$-tuples satisfying
 $\bt_0$}\}
\]
for $\bt_0\subseteq \bt$ finite and $k\in \bbN$ generate a meager filter extending the 
Frech\'et filter. 

 Assume  $I$ is a linear order of cardinality $\leq \fc$. Then there is an 
ultrafilter $\cU$ on $\bbN$ such that  $\prod_{\cU} A$ 
includes a $\phi$-chain $\{\ba_i\colon i\in I\}$ 
that is  weakly $(\aleph_1,\phi)$-skeleton like and 
consists of elements realizing $\bt$. 
\end{prop}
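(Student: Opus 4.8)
The statement is the uncountable analogue of Proposition~\ref{P.local.1}, so the plan is to mimic the proof of Proposition~\ref{P.construction} but replace the independent family over the Fréchet filter by an independent family over the meager filter supplied by the hypothesis, using Lemma~\ref{L.meager}. Let $D_0$ be the meager filter generated by the sets $X[\bt_0,k]$ for finite $\bt_0\subseteq\bt$ and $k\in\bbN$; by hypothesis it extends the Fréchet filter, so Lemma~\ref{L.meager} gives a family $\cG_{D_0}$ of cardinality $\fc$ of functions in $\bbN^{\bbN}$ that is independent $\bmod\ D_0$. First I would enlarge $D_0$ to a filter $D$ that is maximal with respect to the property that $\cG=\cG_{D_0}$ is still independent mod $D$; such a maximal $D$ exists by Zorn's lemma and it still refines $D_0$, so every $X[\bt_0,k]$ lies in $D$. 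Now $\FI(\cG)$, the sets $A_h$, and $\FI_s(\cG)$ are defined exactly as before, and Lemma~\ref{L.Cohen} applies to $D$ and $\cG$ verbatim.

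\textbf{Carrying it out.} For each $i\in\bbN$ let the $i$-th ``column'' of the function $\bh$ give a $\phi$-chain: writing $\bh(i)=\langle a_0(i),\dots,a_{k_i-1}(i)\rangle$ with $a_0(i)\prec_\phi\cdots\prec_\phi a_{k_i-1}(i)$ (this is the case on the $D$-large set $X[\emptyset,k]$; off it we fill in arbitrarily with the last entry repeated, as in Proposition~\ref{P.construction}). To each $g\in\cG$ associate $\hat g\in\prod_{i}A^n$ by $\hat g(i)=a_{g(i)\bmod k_i}(i)$ (truncating the index into the available range), exactly in the style of the proof of Proposition~\ref{P.local.1}. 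Because every $X[\bt_0,k]$ belongs to $D$ and hence to every ultrafilter $\cU\supseteq D$, the Fundamental Theorem of Ultraproducts guarantees that each $\hat g$ represents an element of $\prod_\cU A$ realizing $\bt$. Then one reruns the machinery of \S\ref{S.Construction}: enumerate $\cG=\{f_i\colon i\in I\}$, form $B_{ij}=\{n\colon \hat f_i(n)\preceq_\phi\hat f_j(n)\}$ and $C_{gij}$ exactly as there, invoke Claim~\ref{C.const.1} (valid since it only uses Lemma~\ref{L.Cohen}) to support the relevant sets by a countable $\bbS_g\subseteq I$, and prove the finite-intersection property of $\{B_{ij}\}\cup\{C_{gij}\}$ by the identical Ramsey-plus-pigeonhole argument of Claim~\ref{C.fip}. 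An ultrafilter $\cU$ extending $D$ together with all these sets then yields the desired weakly $(\aleph_1,\phi)$-skeleton like $\phi$-chain $\langle\ba_i\colon i\in I\rangle$ of realizations of $\bt$.

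\textbf{Main obstacle.} The one genuinely new point, and the step I expect to require the most care, is the interaction between the maximal filter $D$ and the type-realization sets $X[\bt_0,k]$: I must be sure that extending $D_0$ to a $D$ maximal for independence of $\cG$ does not destroy membership of the $X[\bt_0,k]$ in $D$ (it cannot, since maximal extensions only grow the filter), and, conversely, that the finite-intersection argument of Claim~\ref{C.fip} still goes through when the ambient filter $D$ is merely a maximal-independence filter over a meager base rather than over the Fréchet filter. Inspecting that argument, every use of $D$ is through Lemma~\ref{L.Cohen} and through ``$\forall^D n$'' statements inside fixed sets $A_{h_m\cup h}$, none of which used any special property of the Fréchet filter; so the proof transfers, but this verification is where attention is needed. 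Everything else — the construction of the $h_m$, the Ramsey coloring $\psi(t)$, the counting of $k^*$-types, and the final check that the resulting chain is weakly $(\aleph_1,\phi)$-skeleton like — is word-for-word as in \S\ref{S.Construction}, so I would simply say ``the rest of the proof is identical to the proof of Proposition~\ref{P.construction}.''
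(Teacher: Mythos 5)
Your proposal matches the paper's proof in structure and substance: take $D_0$ to be the meager filter generated by the $X[\bt_0,k]$, invoke Lemma~\ref{L.meager} to get a family of cardinality $\fc$ independent mod $D_0$, extend to a maximal independence filter $D\supseteq D_0$, translate each $g\in\cG$ to $\hat g$ landing pointwise in the given chains, and then run the argument of Proposition~\ref{P.construction} verbatim; the ultrafilter inherits the sets $X[\bt_0,k]$, so Łoś's theorem gives realization of $\bt$. The paper's write-up is terser (it implicitly performs the maximalization of $D$ by deferring to the proof of Proposition~\ref{P.construction}), but you have filled in exactly the steps the paper leaves implicit, and your check that nothing in Claim~\ref{C.fip} used any feature of the Fréchet filter beyond Lemma~\ref{L.Cohen} is the right thing to verify.
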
 

\begin{proof} Let $D_0$ denote the filter generated by all $X[\bt_0,k]$ 
for $\bt_0\subseteq \bt$ finite and $k\in \bbN$. By Lemma~\ref{L.meager} 
there is a family $\cG_0$ of cardinality $\fc$ that is independent mod $D$. 
For each $k\in \bbN$ enumerate the $\phi$-chain $\bh(k)$ as
$a_i(k)$, $i<k$. 
Like in the proof of Proposition~\ref{P.local.1} for $g\in \cG_0$ define
$\hat g\in A^{\bbN}$ by 
$\hat g(k)=a_{g(k)}(k)$ if $g(k)<k$ and $a_{k-1}(k)$ otherwise. 

The construction described in the proof of Proposition~\ref{P.construction} 
results in $\cU$ such that all elements of the resulting $\phi$-chain $\ba_i$, for 
$i\in I$, realize $\bt$. 
\end{proof} 

The proof of the following metric version is identical 
to the proof of Proposition~\ref{P.local.2}. Note that $A$ is not assumed to be separable. 

\begin{prop}\label{P.local.2.m} 
Let  $A$ be a metric structure and let $\bt$ is 
a type over $A$. 
Assume there is a function $\bh\in \prod_{k\in \bbN}  A^{k\cdot n}$ 
such that the sets
\begin{align*}
X[\bt_0,k]=\{i\colon& \bh(i)\text{ is a $\phi,1/k$-chain  
consisting of $n$-tuples  $1/k$-satisfying $\bt_0$}\}
\end{align*}
for $\bt_0\subseteq \bt$ finite and $k\in \bbN$ generate a meager filter
 extending the 
Frech\'et filter. 

 Assume  $I$ is a linear order of cardinality $\leq \fc$. Then there is an 
ultrafilter $\cU$ on $\bbN$ such that  $\prod_{\cU} A$ 
includes a $\phi$-chain $\{\ba_i\colon i\in I\}$ 
that is  weakly $(\aleph_1,\phi)$-skeleton like and 
consists of elements realizing $\bt$. \qed
\end{prop}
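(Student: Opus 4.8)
The plan is to follow the blueprint of Proposition~\ref{P.construction.m} exactly, substituting the fixed finite $\phi$-chains $a_i(n)$ living in $M_n$ with the ``moving'' finite $\phi,1/k$-chains $a_i(k)$ extracted from the single structure $A$ via the witness function $\bh$. First I would set $D_0$ to be the meager filter on $\bbN$ generated by the sets $X[\bt_0,k]$ (which by hypothesis extends the Fr\'echet filter), apply Lemma~\ref{L.meager} to obtain an independent-mod-$D_0$ family $\cG_0\subseteq \bbN^\bbN$ of size $\fc$, and then — crucially — let $D$ be a \emph{maximal} filter extending $D_0$ for which $\cG_0$ remains independent. This preserves the two key features needed downstream: $\cG_0$ is independent mod $D$ (so Claim~\ref{C.fip.m}-type finite-intersection arguments go through), and $D$ is maximal with this property (so Lemma~\ref{L.Cohen} applies, giving the countable supports $\bbS_g$ used to define the witnessing sets $\cC_{\bar b}$ for weak skeleton-likeness). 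Since the diameter of $A$ is $\leq 1$ and $D\supseteq D_0$ refines each $X[\bt_0,k]$, the ``diagonal'' functions $\hat g(k)=a_{g(k)}(k)$ (with the usual convention $\hat g(k)=a_{k-1}(k)$ when $g(k)\geq k$) are representing sequences of elements of $\prod_{\cU} A$ that realize $\bt$ for any $\cU\supseteq D$: for each finite $\bt_0\subseteq\bt$ and each $\e>0$, the set of $k$ on which $a_{g(k)}(k)$ $\e$-satisfies $\bt_0$ contains $X[\bt_0,m]$ for $1/m<\e$ up to a finite set, hence lies in $D\subseteq\cU$.

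The core combinatorial step is then to rerun Claim~\ref{C.fip.m} verbatim with $M_n$ replaced by $A$ and $a_i(n)$ replaced by $a_i(k)$: one fixes $\bar k$, indices $i(0)<\dots<i(\bar k-1)$ in $I$, functions $g(0),\dots,g(\bar k-1)\in A^\bbN$, and $\e>0$, discretizes $[0,1]$ into $M+1$ levels with $M>2/\e$, builds the increasing chain $h_m\in\FI(\bbS^\cG\setminus\cT^\cG)$ deciding the (finitely many possible) ``$k^*$-types'' using Claim~\ref{C.const.1.m} applied to $A$ in place of $M_n$, and derives the required finite-intersection property by the same Ramsey-plus-pigeonhole contradiction. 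The only structural difference is bookkeeping: the underlying sets $\{a_i(k):i<k\}\subseteq A$ vary with $k$, so one must not write $a_i(n)=i$ as in the discrete proof; instead one works abstractly with whatever finite $\phi,1/k$-chain $\bh(k)$ provides, which is exactly what the proof of Proposition~\ref{P.construction.m} already does modulo that identification. Given Claim~\ref{C.fip.m}, any ultrafilter $\cU\supseteq D$ containing all the $B_{ij}$ and $C_{gij\e}$ yields, via the argument at the end of Proposition~\ref{P.construction.m}, a $\phi$-chain $\langle\ba_i\colon i\in I\rangle$ in $\prod_{\cU} A$ isomorphic to $I$, with $\cC_{\bb}=\{\ba_i\colon i\in\bbS_g\}$ witnessing that it is weakly $(\aleph_1,\phi)$-skeleton like; and by the previous paragraph every $\ba_i$ realizes $\bt$.

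Since essentially every moving part is already established in the excerpt (Lemma~\ref{L.meager} provides the independent family, Lemma~\ref{L.Cohen} the supports, Claim~\ref{C.fip.m}'s proof the finite-intersection property, and Proposition~\ref{P.construction.m}'s final paragraph the assembly), the proof reduces to the single sentence: \emph{the construction described in the proof of Proposition~\ref{P.construction.m}, carried out over $D$ with the diagonalized chains $\hat g$, produces an ultrafilter $\cU$ such that all elements of the resulting $\phi$-chain realize $\bt$.} The main obstacle — such as it is — is verifying that realizing $\bt$ is preserved: one must check that the sets $\{k\colon \hat g(k)\text{ }\e\text{-satisfies }\bt_0\}$ belong to $D$, which is immediate from $D\supseteq D_0\ni X[\bt_0,m]$ once $1/m<\e$, together with the convention handling $g(k)\geq k$ on a set that is already small (finite, hence co-Fr\'echet, hence in $D$). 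There is no genuine difficulty beyond recognizing that the meager-filter hypothesis on $\bh$ is precisely tailored so that Lemma~\ref{L.meager} and the $D$-refinement both apply; accordingly the write-up can be kept to a few lines, pointing to the proofs of Proposition~\ref{P.local.2} and Proposition~\ref{P.construction.m}.

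\begin{proof}
The argument is identical to the proof of Proposition~\ref{P.local.2}, using Proposition~\ref{P.construction.m} in place of Proposition~\ref{P.construction} and Claim~\ref{C.const.1.m} in place of Claim~\ref{C.const.1}. Let $D_0$ be the filter generated by the sets $X[\bt_0,k]$ for finite $\bt_0\subseteq\bt$ and $k\in\bbN$; by hypothesis $D_0$ is meager and extends the Fr\'echet filter, so Lemma~\ref{L.meager} provides a family $\cG_0$ of cardinality $\fc$ of functions in $\bbN^\bbN$ that is independent mod $D_0$. Fix a maximal filter $D$ extending $D_0$ such that $\cG_0$ is independent mod $D$. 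For each $k$ enumerate the $\phi,1/k$-chain $\bh(k)$ as $a_i(k)$, $i<k$, and for $g\in\cG_0$ set $\hat g(k)=a_{g(k)}(k)$ if $g(k)<k$ and $\hat g(k)=a_{k-1}(k)$ otherwise; since the diameter of $A$ is $\leq 1$, each $\hat g$ is a representing sequence of an element of any ultrapower of $A$. Now run the construction in the proof of Proposition~\ref{P.construction.m} with this family: the sets $B_{ij}$ and $C_{gij\e}$ (defined there with $M_n$ replaced by $A$ and $a_i(n)$ by $a_i(k)$) have the finite intersection property together with $D$, by the verbatim argument for Claim~\ref{C.fip.m}, whose only inputs are the independence of $\cG_0$ mod $D$, the maximality of $D$, and Claim~\ref{C.const.1.m}. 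Let $\cU$ be any ultrafilter extending $D$ and this family. As in the final paragraph of the proof of Proposition~\ref{P.construction.m}, $\cC=\langle\ba_i\colon i\in I\rangle$, where $\ba_i$ has representing sequence the $i$-th element of $\cG_0$, is a $\phi$-chain isomorphic to $I$ that is weakly $(\aleph_1,\phi)$-skeleton like, with $\cC_{\bb}=\{\ba_i\colon i\in\bbS_g\}$. Finally each $\ba_i$ realizes $\bt$: for finite $\bt_0\subseteq\bt$ and $\e>0$, pick $m$ with $1/m<\e$; then up to a finite set the set of $k$ for which $\hat g(k)$ $\e$-satisfies $\bt_0$ contains $X[\bt_0,m]\in D_0\subseteq\cU$, so $\bt_0$ is realized by $\ba_i$ in $\prod_{\cU}A$, and letting $\bt_0$ and $\e$ vary gives that $\ba_i$ realizes $\bt$.
\end{proof}
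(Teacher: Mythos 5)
Your proposal is correct and follows exactly the paper's own route: the paper reduces Proposition~\ref{P.local.2.m} to the proof of Proposition~\ref{P.local.2} (take $D_0$ generated by the $X[\bt_0,k]$, invoke Lemma~\ref{L.meager} for an independent family, diagonalize via $\hat g(k)=a_{g(k)}(k)$, and rerun the ultrafilter construction), with Proposition~\ref{P.construction.m} and Claim~\ref{C.const.1.m} substituted for their discrete counterparts. Your additional explicit steps — taking $D$ maximal above $D_0$ with $\cG_0$ still independent, and the $\e$-vs-$1/k$ check that the $\ba_i$ realize $\bt$ — are exactly the details the paper leaves implicit.
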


\section{Applications} 
\label{S.Applications}

Recall that $\Alt(n)$ is the alternating group on $\{0,\dots, n-1\}$. 
The following is the main result of \cite{ElHaScTh} (see also \cite{Th:On}).  

\begin{thm}[Ellis--Hachtman--Schneider--Thomas] 
If CH fails then there are $2^{\fc}$ ultrafilters on $\bbN$ such that the 
ultraproducts $\prod_{\cU} \Alt(n)$ are pairwise nonisomorphic. 
\end{thm}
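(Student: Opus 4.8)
The plan is to reduce the Ellis--Hachtman--Schneider--Thomas theorem to Theorem~\ref{T1+} (or its local version Proposition~\ref{P.local.2}) applied to the sequence of models $A_n = \Alt(n)$. The key point is that the first-order theory of the sequence $\langle \Alt(n) : n\in\bbN\rangle$ must be shown to admit, in a uniform way, arbitrarily long finite $\preceq_\phi$-chains for a single formula $\phi$ in the language of groups; once this is established, Theorem~\ref{T1+} delivers $2^{\fc}$ nonisomorphic ultraproducts $\prod_{\cU}\Alt(n)$ directly, since CH fails by hypothesis. So the entire content of the argument is in exhibiting the order property for the sequence $\langle\Alt(n)\rangle$ by a \emph{single} formula $\phi(\bar x,\bar y)$.

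First I would recall the classical fact (going back to the analysis of ultraproducts of finite symmetric and alternating groups) that a nonprincipal ultraproduct $\prod_{\cU}\Alt(n)$ has unstable theory; concretely, one can define an ordering on tuples coding subsets via supports or via the cycle structure of permutations. The cleanest route is to find, inside $\Alt(n)$, a definable family of elements linearly preordered by some group-theoretic formula whose length grows with $n$. A standard choice: fix $\phi(x_1,x_2,\bar y)$ expressing, with the help of a few auxiliary parameters $\bar y$, a relation of the form ``the support of $x_1$ is contained in the support of $x_2$'' or ``$x_1$ and $x_2$ are powers of a common long cycle with $x_1$ a smaller power,'' so that the set of elements of a fixed conjugacy class of long cycles (or products of two transpositions moved around) forms a $\phi$-chain of length $\to\infty$. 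I would carry this out by: (1) choosing $n=2$ or $n=3$ in the definition of the order property, i.e.\ working with pairs or triples of elements, and writing down the explicit formula $\phi$ coding a linear order via supports; (2) verifying that in $\Alt(k)$ this formula indeed produces a $\preceq_\phi$-chain of length roughly $k$ (counting, say, the singletons or the $2$-element subsets of $\{0,\dots,k-1\}$, using that $\Alt(k)$ has enough elements with prescribed supports for $k\geq 5$); (3) checking the asymmetry clause $\phi(\bar a,\bar b)\wedge\lnot\phi(\bar b,\bar a)$ holds along the chain. With this data, Theorem~\ref{T1+} applies verbatim with $A_i=\Alt(i)$ (handling the finitely many small $i$ trivially, since they contribute nothing to the ultraproduct).

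\begin{proof}[Proof sketch]
Since CH fails, it suffices by Theorem~\ref{T1+} (with $A_i=\Alt(i)$) to produce a single formula $\phi(\bar x,\bar y)$ in the language of groups and, for each $k$, a $\preceq_\phi$-chain of length $k$ in $\Alt(k)$. Work with $n=2$, so $\bar x=(x_1,x_2)$. For a permutation $\sigma$ write $\supp(\sigma)$ for its support. One checks that the relation ``$\supp(x_1)\subseteq\supp(x_2)$, both $x_1,x_2$ are products of disjoint $3$-cycles'' is expressible by a first-order formula $\phi(\bar x,\bar y)$ using finitely many parameters $\bar y$ (coding, e.g., a reference system of $3$-cycles), uniformly across all $\Alt(k)$. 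For $k\geq 9$ pick disjoint $3$-cycles $c_0,c_1,\dots$ and set $\sigma_j$ to be the product $c_0 c_1\cdots c_{j-1}$; then $\langle(\sigma_j,\sigma_j) : 1\le j\le \lfloor k/3\rfloor\rangle$ is a $\preceq_\phi$-chain of the required growing length, since $\supp(\sigma_i)\subsetneq\supp(\sigma_j)$ for $i<j$, giving $\phi(\bar a,\bar b)\wedge\lnot\phi(\bar b,\bar a)$. Thus the hypothesis of Theorem~\ref{T1+} is met, and we obtain $2^{\fc}$ pairwise nonisomorphic ultraproducts $\prod_{\cU}\Alt(n)$.
\end{proof}

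The main obstacle I anticipate is purely bookkeeping: making sure the formula $\phi$ together with its parameters is genuinely \emph{uniform} in $n$ — i.e.\ one fixed formula works for all $\Alt(n)$ simultaneously — and that the auxiliary parameters needed to pin down a reference frame of $3$-cycles can themselves be chosen definably (or absorbed by enlarging the arity $n$). One must also be slightly careful that $\Alt(k)$ for small $k$ (say $k<5$) is not simple enough or large enough to carry the construction, but since only cofinitely many coordinates matter for a nonprincipal ultrafilter this causes no difficulty. Everything else is an immediate invocation of Theorem~\ref{T1+}.
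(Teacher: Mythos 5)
Your high-level strategy---reducing to Theorem~\ref{T1+} with $A_i=\Alt(i)$, so that everything rests on exhibiting a single first-order formula $\phi$ in the language of groups that witnesses uniformly long $\preceq_\phi$-chains---is exactly the paper's route. But the witnessing formula you propose does not work, and the issue is not the ``bookkeeping'' you anticipate; it is the whole point. A relation such as ``the support of $x_1$ is contained in the support of $x_2$'' (or ``$x_1$ is a product of disjoint $3$-cycles'') is a statement about the permutation action of $\Alt(k)$ on $\{0,\dots,k-1\}$, not about the abstract group; it is not a first-order formula in the language of groups. Your suggested remedy, a tuple of parameters $\bar y$ coding a ``reference system of $3$-cycles,'' misreads the framework: in the definition of the order property in \S\ref{S.OP}, $\phi(\bar x,\bar y)$ is a $2n$-ary formula in which $\bar x$ and $\bar y$ are $n$-tuples of the same arity, both ranging over the chain; there is no slot for fixed side parameters. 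One could try to enlarge the arity and carry reference elements inside each tuple, but then one must still show that support containment is recovered definably from these references uniformly across all $\Alt(k)$---that is exactly the nontrivial content, and nothing in the sketch addresses it. It is therefore a genuine gap, not a tidy-up.

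The paper sidesteps all of this with a parameter-free, quantifier-free commutation formula: $\phi(x_1,x_2,y_1,y_2)$ asserting $x_1y_2=y_2x_1\ \land\ x_2y_1\neq y_1x_2$. Since the second conjunct already makes $\phi(\bar a,\bar b)$ imply $\lnot\phi(\bar b,\bar a)$, one only has to arrange the conjunction. For $k\geq 2n+4$, a $\phi$-chain of length $n$ in $\Alt(k)$ is obtained by choosing pairs $\bar a^{(i)}=(a_1^{(i)},a_2^{(i)})$ of $3$-cycles whose supports are laid out so that, for $i<j$, $a_1^{(i)}$ is disjoint from $a_2^{(j)}$ (hence commutes with it) while $a_2^{(i)}$ overlaps $a_1^{(j)}$ so as to force noncommutation. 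This formula is uniform across all $\Alt(k)$ with no parameters whatsoever, so Theorem~\ref{T1+} applies directly, which is what you correctly identified as the final step. In short: right reduction, wrong choice of $\phi$; the fix is to trade the ``geometric'' support relation for a purely group-theoretic commutation relation.
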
 
\begin{proof} Let $\phi(x_1,x_2,y_1,y_2)$ be the formula asserting that   
$x_1y_2=y_2x_1$ and $x_2y_1\neq y_1x_2$. 
It is then easy to see that for all natural numbers $k\geq 2n+4$
the group $\Alt(k)$ includes  a $\phi$-chain of length~$n$. 
Therefore the conclusion follows by Theorem~\ref{T1+}. 
\end{proof} 


%
%

\subsection{Applications to operator algebras} \label{S.appl.OA} 
 Theorem~\ref{T1.m} and Theorem~\ref{T1.m+} 
 were stated  and proved for the  case of  bounded metric structures. 
However, the original  motivation for the present paper came from a question 
about the of ultrapowers of C*-algebras and II$_1$ factors stated 
in early versions of \cite{FaHaSh:Model1} and \cite{FaHaSh:Model2}. 
An excellent reference for operator algebras is \cite{Black:Operator}. 

In the following  propositions and accompanying discussion we deal
 with the ultrapower constructions  for 
 C*-algebras and II$_1$-factors, as well as 
the associated relative commutants. Although Theorem~\ref{T1.m} was proved 
for bounded metric structures, it applies to the context of C*-algebras
and II$_1$ factors. Essentially, one applies the result to the unit ball of the given 
algebra.  
All the pertinent definitions can be found in \cite{FaHaSh:Model1} or \cite{FaHaSh:Model2}.

The classes of C*-algebras
and of II$_1$ factors are axiomatizable in the logic of metric structures. 
Both proofs can be found in 
\cite[\S3]{FaHaSh:Model2}, and 
the (much more difficult) II$_1$ factor case was first proved in \cite{BYHJR}, using a rather different 
axiomatization from the one given in \cite{FaHaSh:Model2}.   
Extending results of \cite{GeHa} and \cite{Fa:Relative}, 
in \cite[Lemma~5.2]{FaHaSh:Model1}
 it was also 
proved that the class of infinite dimensional C*-algebras has the order property, as 
witnessed by the formula
\[
\phi(x,y)=\|xy-x\|.
\]
Assume $a_i$, $i\in \bbN$, is a sequence of positive operators of norm one
such that $a_i-a_j$ is positive and of norm one whenever $j<i$. Then 
this sequence forms a $\preceq_\phi$-chain. Such a sequence exists in every 
infinite-dimensional C*-algebra (see the proof of 
\cite[Lemma~5.2]{FaHaSh:Model1}). Ä
Note that it is important to have this $\preceq_\phi$-chain 
inside the unit ball of the algebra. 
In \cite[Lemma~5.2]{FaHaSh:Model1} it was also proved that the relative commutant type 
(see below for the definition) of 
every infinite-dimensional C*-algebra has the order property, and that this is witnessed 
by the same $\phi$ as above.

In \cite[Lemma~3.2 (3)]{FaHaSh:Model1} it was proved that the class of II$_1$ factors
has the order property, as witnessed by the formula
\[
\psi(x_1,y_1,x_2,y_2)=\|x_1 y_2- y_2 x_1 \|_2. 
\]
It was also proved in \cite[Lemma~3.4]{FaHaSh:Model1} that the relative commutant type
(see below) of any II$_1$ factor has the order property, as witnessed by $\psi$ above. 
We emphasize that, similarly to  the case of C*-algebras, an arbitrarily long finite  $\psi$-chain 
can  be found inside the unit ball of the algebra. 
This is necessary in order to have the proof work. 
Note that without this requirement even~$\bbC$ includes an infinite $\psi$-chain, 
although $\bbC$ clearly does not have the order property. 

Recall that two  C*-algebras are (algebraically) isomorphic if and only if they 
are isometric, and that the same  applies to  II$_1$ factors. 
The following is a quantitative improvement to the  results of \cite{GeHa}, \cite{Fa:Relative} 
  (for C*-algebras) and~\cite{FaHaSh:Model1} (for II$_1$ factors).

\begin{prop} \label{C.x} 
Assume $A$ is a separable infinite-dimensional C*-algebra or a separably 
acting II$_1$-factor. 
If the Continuum Hypothesis fails, then $A$ has~$2^{\fc}$ nonisomorphic ultrapowers 
associated with ultrafilters on $\bbN$. 
\end{prop}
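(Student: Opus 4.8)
The plan is to deduce this directly from Theorem~\ref{T1.m+}, with the facts about the order property for C*-algebras and II$_1$ factors that were recalled in the preceding discussion. First I would treat the C*-algebra case: by \cite[Lemma~5.2]{FaHaSh:Model1}, the class of infinite-dimensional C*-algebras has the order property witnessed by $\phi(x,y)=\|xy-x\|$, and moreover every infinite-dimensional C*-algebra $A$ contains, inside its unit ball, an arbitrarily long finite $\preceq_\phi$-chain (take positive contractions $a_i$ with $a_i-a_j$ positive of norm one for $j<i$). Thus with $A_i=A$ for all $i$, the hypotheses of Theorem~\ref{T1.m+} are met: in $A_i$ there is a $\preceq_\phi$-chain of length $i$. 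Since $A$ is separable, $\chi(A)=\aleph_0\leq\fc$, so Theorem~\ref{T1.m+} applies and yields $2^{\fc}$ isometry types among the metric structures $\prod_{\cU}A$. Because two C*-algebras are isometric if and only if they are isomorphic, this gives $2^{\fc}$ nonisomorphic ultrapowers. The II$_1$ factor case is identical, using instead $\psi(x_1,y_1,x_2,y_2)=\|x_1y_2-y_2x_1\|_2$ and the fact from \cite[Lemma~3.2(3)]{FaHaSh:Model1} that the class of II$_1$ factors has the order property witnessed by $\psi$ with the witnessing chains found in the unit ball; again isometry and isomorphism coincide for II$_1$ factors, so we conclude.

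One point that needs a remark rather than real work: Theorem~\ref{T1.m+} as stated is about bounded metric structures, whereas a C*-algebra or II$_1$ factor is an unbounded metric space. The standard fix, already flagged in \S\ref{S.appl.OA}, is to apply the theorem to the unit ball of the algebra, equipped with the relevant metric ($\|\cdot\|$ for C*-algebras, $\|\cdot\|_2$ for II$_1$ factors) and the ambient operations; the $\phi$- (resp.\ $\psi$-) chains live in the unit ball by construction, and the ultrapower of the algebra is recovered from the ultrapower of the unit ball in the usual way, so an isometry of ultrapowers of unit balls is induced by, and induces, an isomorphism of the ultrapowers of the algebras. I would state this reduction in one sentence and cite \cite{FaHaSh:Model2} for the details of the ultrapower construction.

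The main obstacle, such as it is, is purely bookkeeping: verifying that the notion of ``isometry type'' delivered by Theorem~\ref{T1.m+} for the unit-ball structures translates into ``isomorphism type'' for the C*-algebras or II$_1$ factors themselves. This rests on two classical facts — that a unital (or even just linear, norm-preserving) bijection between C*-algebras that is isometric is a $*$-isomorphism, and the analogous statement for II$_1$ factors with the $2$-norm together with the weak topology — both of which are standard (see \cite{Black:Operator}) and require no new argument here. Everything else is a direct citation of Theorem~\ref{T1.m+} and \cite[Lemma~5.2]{FaHaSh:Model1}, \cite[Lemma~3.2(3)]{FaHaSh:Model1}.
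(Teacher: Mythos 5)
Your proof is correct and follows essentially the same route as the paper's (which invokes Theorem~\ref{T1.m} rather than the slightly more general Theorem~\ref{T1.m+}, but this is the same argument applied to a constant sequence). One small point of precision worth flagging: the ``isometry types'' delivered by Theorem~\ref{T1.m+} refer to isomorphisms of metric structures in the sense of continuous model theory, i.e.\ bijections preserving the metric \emph{and} all functions and relations of the signature (multiplication, adjoint, addition, scalars); such a map on unit balls automatically extends to a $*$-isomorphism of the algebras, so you do not actually need to invoke Kadison-type rigidity of plain linear isometries --- the structure-preservation is already built in.
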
 

In Proposition~\ref{C.x} it suffices
to assume that the character density of $A$ is $\leq\fc$. 
This does not apply to Proposition~\ref{C.RC} below where the separability assumption
is necessary (cf. the last paragraph of \cite[\S 4]{FaHaSh:Model2} or~\cite{FaPhiSte:Relative}). 

\begin{proof} [Proof of Proposition~\ref{C.x}]Since by the above discussion
both classes are axiomatizable with unstable theories, 
   Theorem~\ref{T1.m} implies
that in all of these cases there are $2^{\fc}$ ultrapowers  
with nonisomorphic unit balls. Therefore the result follows.  
\end{proof} 

In the light of Proposition~\ref{C.x}, 
it is interesting to note that the theory of abelian tracial von Neumann algebras is stable
(\cite[\S 4]{FaHaSh:Model1}). More precisely, a tracial von Neumann algebra $M$
has the property that it has nonisomorphic ultrapowers (and therefore by Theorem~\ref{T1.m}
it has $2^{\fc}$ nonisomorphic ultrapowers)  if and only if it is not of type I. 
This is a consequence of \cite[Theorem~4.7]{FaHaSh:Model1}. 

The following is a quantitative improvement of \cite[Proposition~3.3]{FaHaSh:Model1}, 
confirming a conjecture of 
Sorin Popa in the case when the Continuum Hypothesis fails. 
The intended ultrapower is the tracial ultrapower, and the analogous 
result for norm ultrapower is also true. 

\begin{prop} \label{P.A1} Assume the Continuum Hypothesis fails. Then there
are~$2^{\fc}$ ultrafilters  on $\bbN$ such that the
II$_1$ factors $\prod_{\cU} M_n(\bbC)$ 
are all nonisomorphic. 
\end{prop}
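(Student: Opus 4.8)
The statement to prove is Proposition~\ref{P.A1}: if CH fails then there are $2^{\fc}$ ultrafilters on $\bbN$ with the II$_1$ factors $\prod_{\cU} M_n(\bbC)$ pairwise nonisomorphic. The natural route is to realize this as an instance of Theorem~\ref{T1.m+} (the ultraproduct version of the metric dichotomy), applied to the sequence of matrix algebras $A_n = M_n(\bbC)$ rather than to a single fixed algebra. The obstacle to just quoting Theorem~\ref{T1.m} is that no single finite-dimensional factor has an unstable theory, and indeed none of the $M_n(\bbC)$ individually admits long $\phi$-chains of unbounded length; but the ultraproduct $\prod_{\cU} M_n(\bbC)$ is a genuine (non-separable, non-type-I) II$_1$ factor, so the instability must be harvested from the sequence as a whole. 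This is exactly the scenario Theorem~\ref{T1.m+} is designed for: it only requires a single $2n$-ary formula $\phi$ such that $A_i$ contains a $\preceq_\phi$-chain of length $i$.

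First I would fix the formula $\psi(x_1,y_1,x_2,y_2)=\|x_1 y_2 - y_2 x_1\|_2$ from \cite[Lemma~3.2(3)]{FaHaSh:Model1}, which witnesses the order property for the class of II$_1$ factors, evaluated inside the unit ball. The key verification is combinatorial: for each $k$ one must exhibit a $\preceq_\psi$-chain of length $k$ inside the unit ball of $M_n(\bbC)$ for all sufficiently large $n$ (say $n \geq f(k)$ for some function $f$). This is the standard construction — one takes, inside a sufficiently large matrix block, pairs of partial isometries / projections $(x_j, y_j)$ arranged so that $x_j$ commutes with $y_{j'}$ for $j' \leq j$ but $x_j$ fails to commute with $y_{j'}$ for $j' > j$, using orthogonal subblocks of $\bbC^n$; the normalized Hilbert–Schmidt norm $\|\cdot\|_2$ of the relevant commutator is then bounded away from $0$. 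By passing to a reindexed sequence $A_i := M_{f(i)}(\bbC)$ (or simply noting that $M_n(\bbC)$ for $n\geq f(i)$ contains a chain of length $i$, and reindexing), we arrange that $A_i$ contains a $\preceq_\psi$-chain of length $i$, which is precisely hypothesis of Theorem~\ref{T1.m+}.

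Then Theorem~\ref{T1.m+} directly yields $2^{\fc}$ pairwise non-isometric models of the form $\prod_{\cU} A_n$ for nonprincipal $\cU$ on $\bbN$. Since each such ultraproduct is a II$_1$ factor and, for II$_1$ factors, isomorphism coincides with isometry of the underlying (unit-ball) metric structures — this is recorded in the discussion preceding Proposition~\ref{C.x} — we conclude there are $2^{\fc}$ pairwise non-isomorphic II$_1$ factors among the $\prod_{\cU} M_n(\bbC)$. Reindexing back (an ultrafilter on the reindexed $\bbN$ pulls back to one on $\bbN$, and $\prod_{\cU}M_{f(n)}(\bbC)$ is again of the form $\prod_{\cV}M_n(\bbC)$ for an appropriate $\cV$) gives the statement for the original sequence $M_n(\bbC)$. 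Finally I would remark that the same argument applies verbatim to the norm ultrapower using the C*-algebra order-property formula $\phi(x,y)=\|xy-x\|$ in place of $\psi$, since the matrix blocks used above also furnish $\preceq_\phi$-chains of positive contractions with $a_i - a_j$ positive of norm one for $j<i$, giving the parenthetical strengthening.

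**Main obstacle.** The only non-formal point is the explicit combinatorial construction of arbitrarily long finite $\preceq_\psi$-chains inside the unit balls of finite matrix algebras, together with the bookkeeping (the function $f$ and the reindexing) needed to fit the hypothesis of Theorem~\ref{T1.m+}; everything after that is a direct appeal to Theorem~\ref{T1.m+} plus the isomorphism $=$ isometry fact for II$_1$ factors. I expect the chain construction to be routine — it is essentially the finite approximation underlying the proof that $\prod_{\cU}M_n(\bbC)$ has the order property — but it is the one place where an actual calculation (lower-bounding a normalized commutator norm on suitably chosen blocks) is required.
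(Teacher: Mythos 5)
Your proposal is correct and takes essentially the same route as the paper, which disposes of Proposition~\ref{P.A1} in one line by citing the ultraproduct version of the metric dichotomy together with the $\preceq_\psi$-chains from \cite[Lemma~3.2]{FaHaSh:Model1}. The one small point where you are actually more careful than the paper is worth noting: the paper writes ``Theorem~\ref{T1.m}'' in its proof, but since the $M_n(\bbC)$ are distinct models rather than a single metric structure with unstable theory, the correct statement to invoke is the sequence version Theorem~\ref{T1.m+}, exactly as you do; and your remark that a reindexing/push-forward of ultrafilters along an injection $f$ is needed if $M_n(\bbC)$ only contains $\psi$-chains of sublinear length is a legitimate bit of bookkeeping that the paper leaves implicit in its citation of \cite[Lemma~3.2]{FaHaSh:Model1}.
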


\begin{proof} This is a direct application of Theorem~\ref{T1.m}, 
using $\preceq_\phi$-chains obtained in 
\cite[Lemma~3.2]{FaHaSh:Model1} . 
\end{proof} 

Assume $M$ is  a C*-algebra or a II$_1$ factor and $\cU$ is a nonprincipal
ultrafilter on $\bbN$. Identify $M$ with its diagonal copy inside $\prod_{\cU} M$. 
The \emph{relative commutant} of $M$ inside its ultrapower is defined as 
\[
\textstyle M'\cap \prod_{\cU} M=\{a\in \prod_{\cU} M\colon (\forall a\in M) ab=ba\}. 
\]
Thus the relative commutant is the set of all elements of $\prod_{\cU} M$ 
realizing the  \emph{relative commutant type} of $M$, consisting of
all conditions of the form $\|xb-bx\|=0$, for $b\in M$. 
(Here $\|\cdot\|$ stands for $\|\cdot\|_2$ in case when $M$ is a II$_1$ factor.)
The relative commutant is a C*-algebra (II$_1$ factor, respectively) and 
it is fair to say that most applications of ultrapowers in operator algebras are applications
of relative commutants. A relative commutant is said to be \emph{trivial} if it is equal to 
 the center of $M$. 
From a model-theoretic point of view, a relative commutant
is a submodel consisting of all realizations of a definable type over $M$. 

The original  motivation for the work in   \cite{Fa:Relative}, \cite{FaHaSh:Model1} and 
\cite{FaHaSh:Model2} came from  the question whether all relative commutants of a
given operator algera in its ultrapowers associated with ultrafilters on $\bbN$ 
are isomorphic. 
This was asked by 
Kirchberg in the case of  C*-algebras and McDuff in the case of  II$_1$-factors. 
Here is a quantitative improvement to the answer to these questions given in the 
above references. 

\begin{prop} \label{C.RC} 
Assume $A$ is a separable infinite-dimensional C*-algebra or a separably 
acting II$_1$-factor. 
If the Continuum Hypothesis fails, then~$A$ has 
 $2^{\fc}$ nonisomorphic relative commutants in ultrapowers
associated with ultrafilters on $\bbN$. 
\end{prop}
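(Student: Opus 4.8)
The plan is to prove Proposition~\ref{C.RC} by running the argument behind Proposition~\ref{C.x} one level down, inside the relative commutant rather than the whole ultrapower. Unlike the proof of Proposition~\ref{C.x}, one cannot simply invoke Theorem~\ref{T1.m}, because the relative commutant is not an axiomatizable class but the set of realizations of a definable type; the local machinery of \S\ref{S.Local} is exactly what is needed here. First I would recall, from the discussion in \S\ref{S.appl.OA}, that the relative commutant type $\bt$ of $A$ has the order property, witnessed by a \emph{quantifier-free} $2n$-ary formula $\phi$: for an infinite-dimensional C*-algebra one takes $\phi(x,y)=\|xy-x\|$ with $n=1$ (\cite[Lemma~5.2]{FaHaSh:Model1}), and for a II$_1$ factor one takes $\phi=\psi$ with $\psi(x_1,y_1,x_2,y_2)=\|x_1y_2-y_2x_1\|_2$ and $n=2$ (\cite[Lemma~3.4]{FaHaSh:Model1}); in both cases arbitrarily long finite $\phi$-chains approximately satisfying finite pieces of $\bt$ occur inside the unit ball, which is what the order property of a type demands. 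Here $\bt$ is understood as the $n$-ary type whose realizations are the $n$-tuples all of whose entries realize the $1$-ary relative commutant type (the conditions $\|xb-bx\|=0$ for $b\in A$).

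Next, for each linear order $I$ with $|I|=\fc$ I would apply Proposition~\ref{P.local.1.m} --- legitimate since $A$ is separable --- to obtain a nonprincipal ultrafilter $\cU=\cU_I$ on $\bbN$ such that $\pcU A$ contains a weakly $(\aleph_1,\phi)$-skeleton like $\phi$-chain $\cC_I$ isomorphic to $I$ and consisting of $n$-tuples realizing $\bt$. Let $B_I=A'\cap\pcU A$ be the relative commutant of the diagonal copy of $A$. By the choice of $\bt$ every entry of every tuple of $\cC_I$ lies in $B_I$, so $\cC_I\subseteq B_I^n$; and since $\phi$ is quantifier-free it is absolute between $B_I$ and $\pcU A$, so $\preceq_\phi$ computed inside $B_I$ agrees on tuples from $B_I$ with $\preceq_\phi$ computed in $\pcU A$, whence $\cC_I$ is still a $\phi$-chain in $B_I^n$. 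Moreover, for $\vec a\in B_I^n\subseteq(\pcU A)^n$ the countable witnessing set $\cC_{\vec a}\subseteq\cC_I$ furnished by weak $(\aleph_1,\phi)$-skeleton-likeness of $\cC_I$ in $\pcU A$ already lies inside $B_I$, so $\cC_I$ is weakly $(\aleph_1,\phi)$-skeleton like \emph{in} $B_I$. Finally, $B_I$ is a C*-algebra (resp.\ a II$_1$ factor) of character density exactly $\fc$: it is a subalgebra (resp.\ subfactor) of $\pcU A$, which has character density $\le\fc$ because $A$ is separable, while the discrete set $\cC_I$ forces character density $\ge\fc$.

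Then I would apply Proposition~\ref{P.counting}, which as noted in the excerpt holds verbatim in the metric setting, to the class $\bbK=\{B_I\colon I\text{ a linear order with }|I|=\fc\}$ with this $\phi$ and $\lambda=\fc\ge\aleph_2$ (the inequality since CH fails): every linear order of cardinality $\fc$ embeds as a weakly $(\aleph_1,\phi)$-skeleton like $\phi$-chain into the $n$-th power of some member of $\bbK$, so $\bbK$ contains $2^{\fc}$ pairwise nonisomorphic structures. Since isometry and $*$-isomorphism coincide both for C*-algebras and for II$_1$ factors, these are $2^{\fc}$ pairwise nonisomorphic relative commutants of $A$. The only genuinely delicate point is the descent of weak $(\aleph_1,\phi)$-skeleton-likeness from $\pcU A$ to the substructure $B_I$; this is precisely why \S\ref{S.Local} is set up around a quantifier-free witnessing formula, and once that absoluteness observation is recorded — together with the remark that the witnesses $\cC_{\vec a}$ may be taken inside $\cC_I$ — everything else is bookkeeping identical to the proof of Theorem~\ref{T1.m+}.
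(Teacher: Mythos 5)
Your proposal is correct and follows essentially the same route as the paper: cite the order property of the relative commutant type (witnessed by the quantifier-free $\phi$ for C*-algebras, respectively $\psi$ for II$_1$ factors, from \cite{FaHaSh:Model1}), invoke Proposition~\ref{P.local.1.m} to place a weakly $(\aleph_1,\phi)$-skeleton like $\phi$-chain consisting of realizations of the type inside $\pcU A$, and then count. The paper's own proof of Proposition~\ref{C.RC} is terse and leaves the descent to the relative commutant and the final invariant count implicit, but precisely the quantifier-free absoluteness argument and the appeal to Lemmas~\ref{L.many-invariants}, \ref{L.J}, \ref{L.disj.1} that you spell out are what the paper makes explicit in the analogous Proposition~\ref{P.B(H)}; your observation that the witnessing sets $\cC_{\vec a}$ can be taken inside $\cC_I\subseteq B_I^n$ is the right justification for the descent of skeleton-likeness.
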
 

\begin{proof} In \cite[Lemma~3.2 (3)]{FaHaSh:Model1} and 
 \cite[Lemma~3.4]{FaHaSh:Model1} it was proved that the  
relative commutant type of a II$_1$ factor has the 
order property (cf. \cite[Example~4.8 (1)]{FaHaSh:Model1}), witnessed by $\psi$ 
given in the introduction to \S\ref{S.appl.OA}.  
In \cite[Lemma~5.2]{FaHaSh:Model1} it was proved that the 
 relative commutant type of any infinite-dimensional C*-algebra has the order
property, witnessed by $\phi$ given in the introduction to \S\ref{S.appl.OA}. 
Hence applying Proposition~\ref{P.local.1.m} concludes the proof. 
\end{proof} 

By $\cB(H)$ we shall denote the C*-algebra of all bounded linear operators 
on an infinite-dimensional, separable, complex Hilbert space $H$. 
In \cite{FaPhiSte:Relative} it was proved that that for certain ultrafilters on $\bbN$ 
the relative commutant of $\cB(H)$ in $\prod_{\cU} \cB(H)$ is nontrivial. 
These ultrafilters exist in ZFC. It was also proved in 
\cite{FaPhiSte:Relative} that the relative commutant of $\cB(H)$ in an ultrapower
associated to a selective ultrafilter is trivial.  
Therefore CH implies that not all relative commutants of 
$\cB(H)$ in its ultrapowers associated with ultrafilters on $\bbN$ are isomorphic. 
This fact motivated Juris Stepr\=ans and the first author to ask whether this statement
can be proved in ZFC. Since $\cB(H)$ is not a separable C*-algebra, the following 
 is not a consequence
of Proposition~\ref{C.RC}. 

\begin{prop} \label{P.B(H)} 
Assume that the Continuum Hypothesis fails. 
Then $\cB(H)$ has $2^{\fc}$ nonisomorphic relative commutants associated with 
its ultrapowers. 
\end{prop}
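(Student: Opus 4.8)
The plan is to apply the non-separable metric local construction, Proposition~\ref{P.local.2.m}, together with the counting Proposition~\ref{P.counting}. Let $\bt$ be the relative commutant type of $\cB(H)$, i.e.\ the set of conditions $\|xb-bx\|=0$ for $b\in\cB(H)$, and let $\phi(x,y)=\|xy-x\|$. By \cite[Lemma~5.2]{FaHaSh:Model1} the type $\bt$ has the order property witnessed by $\phi$, with witnessing $\phi$-chains found inside the unit ball of $\cB(H)$; moreover $\phi$ is quantifier-free, so a $\phi$-chain all of whose members lie in a C*-subalgebra $B\subseteq\prod_{\cU}\cB(H)$ is still a $\phi$-chain in $B$, and it remains weakly $(\aleph_1,\phi)$-skeleton like in $B$ (the witnessing countable sets are inherited; this is precisely why \S\ref{S.Local} works with quantifier-free $\phi$). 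Granting the hypothesis of Proposition~\ref{P.local.2.m} for this $\bt$ and $\phi$ (see below), for every linear order $I$ of cardinality $\fc$ we obtain an ultrafilter $\cU_I$ on $\bbN$ so that $\prod_{\cU_I}\cB(H)$ contains a weakly $(\aleph_1,\phi)$-skeleton like $\phi$-chain isomorphic to $I$ whose members realize $\bt$, hence lie in $R_{\cU_I}:=\cB(H)'\cap\prod_{\cU_I}\cB(H)$. By the preceding sentence this is a weakly $(\aleph_1,\phi)$-skeleton like $\phi$-chain inside the C*-algebra $R_{\cU_I}$, and since a $\phi$-chain is discrete we get $\chi(R_{\cU_I})=\fc$. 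Applying Proposition~\ref{P.counting} (which holds in the metric case) to the class $\bbK=\{R_{\cU_I}\colon I\text{ a linear order of cardinality }\fc\}$ with $\lambda=\fc\geq\aleph_2$, we get $2^{\fc}$ pairwise non-isometric members of $\bbK$; since non-isometric C*-algebras are non-isomorphic, $\cB(H)$ has $2^{\fc}$ non-isomorphic relative commutants.

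The remaining point, and the heart of the proof, is to verify the hypothesis of Proposition~\ref{P.local.2.m}: to exhibit a single $\bh\in\prod_{k\in\bbN}\cB(H)^{k}$ such that for every finite $\bt_0\subseteq\bt$ and every $k$ the set of indices $i$ at which $\bh(i)$ is a $\phi,1/k$-chain of length $i$ all of whose entries $1/k$-satisfy $\bt_0$ is nonempty, and such that the filter generated by all these sets is meager and contains the Frech\'et filter. (By the obvious monotonicity in $\bt_0$ and $k$, nonemptiness of all these sets is equivalent to the finite intersection property, so once this is arranged and the filter is meager the hypothesis is met.) The individual chains are produced as in \cite[Lemma~5.2]{FaHaSh:Model1}: given a finite $B_0\subseteq\cB(H)$, one takes a quasicentral approximate unit $(u_k)$ of $\cK(H)$ in the separable algebra $C^*(B_0\cup\cK(H))$; a sufficiently sparse sublist $u_{k_0}\le u_{k_1}\le\cdots$ is a $\phi,1/k$-chain of any prescribed length all of whose entries $1/k$-commute with $B_0$. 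The work is to amalgamate such chains, for an exhaustive enough supply of finite sets $B_0$, into one sequence indexed by $\bbN$ while keeping the generated filter meager; here the fact that only a \emph{meager} filter — not an ultrafilter — is required at this stage, so that afterwards the large independent family $\cG_D$ supplied by Lemma~\ref{L.meager} can be pulled back along a finite-to-one map, is what provides the necessary slack. Carrying out this amalgamation in the non-separable algebra $\cB(H)$, where no countable family of separable subalgebras is cofinal, is the main obstacle, and it is the only step that genuinely uses the structure of $\cB(H)$ rather than the general framework already in place (and the only reason Proposition~\ref{P.local.1.m} does not apply directly).
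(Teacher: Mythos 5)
Your framework is exactly right, and you correctly locate the one place where the argument must do something new: producing a single $\bh\in\prod_k\cB(H)^k$ satisfying the hypothesis of Proposition~\ref{P.local.2.m}, since $\cB(H)$ is nonseparable and Proposition~\ref{P.local.1.m} is unavailable. You also correctly observe that the sets in question need only generate a meager filter extending Fr\'echet, and that Lemma~\ref{L.meager} then pulls back an independent family along a finite-to-one surjection. But you stop precisely there: you say ``the work is to amalgamate such chains \dots\ while keeping the generated filter meager'' and call this ``the main obstacle,'' and then you do not resolve it. This is the genuine content of the proposition, and without it the argument is incomplete.

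The paper resolves the obstacle with a specific, explicit construction, and the mechanism is different from the quasicentral approximate unit idea you gesture at. The index set is taken to be $\bbF^{<\bbN}$, the set of finite tuples of nonincreasing rational step functions $h\colon\bbN\to\bbQ\cap[0,1]$ with $h(0)=1$ that are eventually zero; for such an $h$, $a_h$ is the diagonal operator with eigenvalues $h(j)$ on a fixed basis $(e_j)$, and $\bh(i)$ is the tuple $\langle a_h\colon h\in s_i\rangle$. The filter is generated by the sets $X_{f,m}$ of tuples $\langle h_0,\dots,h_{k-1}\rangle$ with $k\geq m$, $\max_i\|h_i-h_i\circ f\|_\infty\leq 1/m$, $h_i\leq h_{i+1}$ pointwise, and a separation condition forcing $\preceq_\phi$; these sets are shown to be infinite by an explicit formula, the generated filter is analytic and hence meager, and conditions on the $h_i$ make each $\bh(i)$ a $\phi$-chain. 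The crucial analytic input handling the nonseparability of $\cB(H)$ is \cite[Lemma~4.6]{FaPhiSte:Relative}: given a finite family $a_0,\dots,a_{k-1}$ and $\delta>0$, there are two gauges $g_0,g_1$ so that each $a_i$ decomposes as $a_i^0+a_i^1+c_i$ with $a_i^0$ (resp.\ $a_i^1$) commuting with every $a_h$ for $h$ constant on the $g_0$-blocks (resp.\ $g_1$-blocks) and $\|c_i\|<\delta$. This is what makes a fixed, universal family of chains approximately commute with an \emph{arbitrary} finite subset of the nonseparable algebra, via the intersection $X_{g_0,\delta}\cap X_{g_1,\delta}$; your plan of building a separate quasicentral chain for each finite $B_0$ and then ``amalgamating'' does not by itself achieve this, and you give no mechanism for the amalgamation. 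So the proposal sets up the right reduction but omits the decisive construction and the decomposition lemma on which it rests.
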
 

\begin{proof} We shall apply Proposition~\ref{P.local.2.m}. 
The following construction borrows some ideas from the proof 
of  \cite[Theorem~3.3 and Theorem~4.1]{FaPhiSte:Relative}. 
Let $\bbF^{<\bbN}$ be the countable set of all finite sequences of 
nonincreasing functions
$h \colon \bbN \to \bbQ \cap [0, 1]$ that are eventually zero
and such that $h (0) = 1$.
We shall construct a filter $D$ on $\bbF^{<\bbN}$.
For $f$ and $g$ in $\bbR^{\bbN}$ write $\|f-g\|_\infty=\sup_i |f(i)-g(i)|$. 
For $f\colon \bbN\nearrow\bbN$ and $m\in \bbN$
let
$X_{f,m}$ be the set of all $k$-tuples  $\langle h_0,h_1,\dots h_{k-1}\rangle$ 
in $ \bbF$ such that 
\begin{enumerate}
\item\label{IC.1}  $k\geq m$, 
\item\label{IC.2}  $ \max_{i<k} \|h_i-h_i\circ f\|_{\infty} \leq 1/m$, 
\item\label{IC.3}  $ h_i(j)\leq h_{i+1}(j)$ for all $i<k-1$ and all $j$, 
\item\label{IC.4}  for all $i<k-2$ there is $j\in \bbN$ such that $h_i(j)=0$ and $h_{i+1}(j)=1$. 
\end{enumerate}
We claim that $X_{f,m}$ is always infinite. This is essentially a consequence of 
the proof of \cite[Lemma~3.4]{FaPhiSte:Relative} but we shall sketch a proof. 
Fix a sequence $n(j)$, for $j\in \bbN$, such that $n(l+1)\geq f(n_l)$ for all $l$. 
For $Z\subseteq \bbN$ by $\chi_Z$ we denote the characteristic 
function of $Z$. For  $i<k$ set 
\[
h_i=\chi_{[0,mi)}+\sum_{l= im} ^{(i+1)m-1}\frac {(i+1)m-l}m \chi_{ [n(l),n(l+1))}.
\]
A straightforward computation shows that  $\langle h_0,h_1,\dots , h_{k-1}\rangle \in X_{f,m}$. 
Since 
$
X_{f, m} \cap X_{g, n}
 \supseteq X_{\max(f, g), \, \max(m,n)}$, 
the collection of all $X_{f, m}$,
for $f \colon \bbN \nearrow\bbN$ and $\e > 0$,
has the  finite intersection property.
Since the filter generated by these sets is analytic, proper, 
 and includes all cofinite sets, it is meager (see e.g., \cite{BarJu:Book}). 
Fix a basis $e_j$, for $j\in \bbN$, of $H$. 
For $h\colon \bbN\to [0,1]$ define a positive operator $a_h$ in $\cB(H)$ via
\[
\textstyle a_h=\sum_{j\in \bbN} h(j) e_j. 
\]
In other words, $a_h$ is the operator with the eigenvalues $h(j)$ corresponding to the
eigenvectors $e_j$. Fix an enumeration $\bbF^{<\bbN}=\{s_i\colon i\in\bbN\}$.  
Let $\bh$ be a function from $\bbN$ into the finite sequences of positive
operators in the unit ball of $\cB(H)$ defined by 
 $
 \bh(i)=\langle a_h\colon h\in s_i\rangle$. 
 With 
\[
\phi(x,y)=\|xy-y\|
\]
conditions \eqref{IC.3} and \eqref{IC.4} above imply that each $\bh(i)$  is a $\phi$-chain. 

Let $\bt$ be the relative commutant type of $\cB(H)$, i.e., the set of all conditions
of the form $\|ax-xa\|<\e$ for $a$ in the unit ball of $\cB(H)$ and $\e>0$. 
Let $\bt_0$ be a finite subset of $\bt$, let $\e>0$,   and let $a_0,\dots, a_{k-1}$ list all 
elements of~$\cB(H)$ occurring in $\bt_0$. Let $\delta=\e/6$. 
  \cite[Lemma~4.6]{FaPhiSte:Relative} implies that there are $g_0$ and $g_1$
  such that for each $i<k$ we can write $a_i=a_i^0+a_i^1+c_i$ so that 
  \begin{enumerate}
  \item $a_i^0$ commutes with $a_h$ for every $h$ that is constant on every interval
  of the form $[g_0(m), g_0(m+1))$, 
  \item $a_i^1$ commutes with $a_h$ for every $h$ that is constant on every interval
  of the form $[g_1(m), g_1(m+1))$, and
  \item $\|c_i\|<\delta$. 
  \end{enumerate}
  Then for $i<k$, $j\in X_{g_0,\delta}\cap X_{g_1,\delta}$, and $h$ an entry of $\bh(j)$ we have
  \[
  [a_i,a_{h}]=
  [a_i^0,a_{h}]+
    [a_i^1,a_{h}]+
      [c_i,a_{h}]
      \]
and since $\|a_i^0\|$, $\|a_i^1\|$ and $\|a_h\|$ are all $\leq 1$
we conclude that $\|[a_i,a_h]\|<6\delta$.   

Therefore  $a_h$  realizes  $\bt_0$, and   
  Proposition~\ref{P.local.2.m} implies that for every linear order $I$ of cardinality $\fc$
  there is an ultrafilter~$\cU$ such that  $\prod_{\cU} \cB(H)$
  contains a $\phi$-chain $\cC$ isomorphic to $I$ which is $(\aleph_1,\phi)$-skeleton 
  like and included in the relative commutant of $\cB(H)$. 
  Since $\phi$ is quantifier-free, $\cC$ remains a $\phi$-chain in 
  the relative commutant 
  $\cB(H)'\cap \prod_{\cU} \cB(H)$. 
  Since $\cC$ is  $(\aleph_1,\phi)$-skeleton 
 like in $\prod_{\cU} \cB(H)$, it is $(\aleph_1,\phi)$-skeleton 
 like in the substructure.  Using 
  Lemma~\ref{L.many-invariants}, 
   Lemma~\ref{L.J}, 
  Lemma~\ref{L.disj.1} 
and a counting  counting argument
as  in the proof of Theorem~\ref{T1+}  we conclude the proof. 
  \end{proof}

\subsection{Concluding remarks}

Before Theorem~\ref{T1} was proved
the following test question was asked in a preliminary 
version of~\cite{FaHaSh:Model2}: Assume $A$ and $B$ are countable models with 
 unstable theories. Also assume $\cU$ and $\cV$ are ultrafilters on $\bbN$ 
 such that $\prod_{\cU} A\not\cong \prod_{\cV} A$. Can we conclude that 
 $\prod_{\cU} B\not\cong \prod_{\cV} B$? 
 A positive answer would, together with \cite[\S3]{KShTS:818},
 imply Theorem~\ref{T1}. 
 However, the answer to this question is consistently negative. 
 Using the method of  \cite{Sh:509}
one can show that in the model obtained there there are   countable graphs
$G$ and $H$ and  ultrafilters  $\cU$ and $\cV$ on $\bbN$   such that 
 $\prod_{\cU} G$, $\prod_{\cV} G$ and $\prod_{\cV} H$ are saturated but $\prod_{\cU} H$ is not. 
 This model has an even more remarkable property:  Every automorphism of
 $\prod_{\cU} H$ lifts to an automorphism of $H^{\bbN}$.
 An interesting and related application of \cite{Sh:509} was recently given in 
 \cite{LuTh:Automorphism}.  

The method of the present paper was adapted to a non-elementary class of all approximately 
matricial
 (shortly AM) C*-algebras in \cite{FaKa:NonseparableII}. 
 A C*-algebra is AM if and only if it is an inductive limit of 
 finite-dimensional matrix algebras.  
 In \cite{FaKa:NonseparableII}  it was proved that in every uncountable
character density~$\lambda$ there are $2^\lambda$ nonisomorphic AM algebras. 


\def\germ{\frak} \def\scr{\cal} \ifx\documentclass\undefinedcs
  \def\bf{\fam\bffam\tenbf}\def\rm{\fam0\tenrm}\fi 
  \def\defaultdefine#1#2{\expandafter\ifx\csname#1\endcsname\relax
  \expandafter\def\csname#1\endcsname{#2}\fi} \defaultdefine{Bbb}{\bf}
  \defaultdefine{frak}{\bf} \defaultdefine{=}{\B} 
  \defaultdefine{mathfrak}{\frak} \defaultdefine{mathbb}{\bf}
  \defaultdefine{mathcal}{\cal}
  \defaultdefine{beth}{BETH}\defaultdefine{cal}{\bf} \def\bbfI{{\Bbb I}}
  \def\mbox{\hbox} \def\text{\hbox} \def\om{\omega} \def\Cal#1{{\bf #1}}
  \def\pcf{pcf} \defaultdefine{cf}{cf} \defaultdefine{reals}{{\Bbb R}}
  \defaultdefine{real}{{\Bbb R}} \def\restriction{{|}} \def\club{CLUB}
  \def\w{\omega} \def\exist{\exists} \def\se{{\germ se}} \def\bb{{\bf b}}
  \def\equivalence{\equiv} \let\lt< \let\gt>
\providecommand{\bysame}{\leavevmode\hbox to3em{\hrulefill}\thinspace}
\providecommand{\MR}{\relax\ifhmode\unskip\space\fi MR }
\providecommand{\MRhref}[2]{%
  \href{http://www.ams.org/mathscinet-getitem?mr=#1}{#2}
}
\providecommand{\href}[2]{#2}

\end{document}